\title{Categorical approach to graph limits}
\author{
Martin Dole{\v{z}}al\thanks{Supported by EXPRO project 20-31529X awarded by the Czech Science Foundation and by the Institute of Mathematics, Czech Academy of Sciences (RVO 67985840).}\\
\href{mailto:dolezal@math.cas.cz}{dolezal@math.cas.cz}
\and
Wies{\l}aw Kubi{\'s}\thanks{Supported by EXPRO project 20-31529X awarded by the Czech Science Foundation and by the Institute of Mathematics, Czech Academy of Sciences (RVO 67985840).}\\
\href{mailto:kubis@math.cas.cz}{kubis@math.cas.cz}
}
\date{
Institute of Mathematics, Czech Academy of Sciences\\[2ex]
\today
}
\newtheorem{theorem}{Theorem}[section]
\newtheorem{proposition}[theorem]{Proposition}
\newtheorem{lemma}[theorem]{Lemma}
\newtheorem{claim}[theorem]{Claim}
\newtheorem{corollary}[theorem]{Corollary}
\theoremstyle{definition}
\newtheorem{definition}[theorem]{Definition}
\newtheorem{example}[theorem]{Example}
\newtheorem{remark}[theorem]{Remark}
\def\sqg{$\square$-graphon}
\def\N{\mathbb N}
\def\R{\mathbb R}
\def\Q{\mathbb Q}
\def\S{\mathbb S}
\def\F{\mathcal F}
\def\K{\mathcal K}
\def\A{\mathcal A}
\def\B{\mathcal B}
\def\C{\mathcal C}
\def\I{\mathcal I}
\begin{document}

\maketitle

\begin{abstract}
We define and study a natural category of graph limits.
The objects are pairs $(\pi,\mu)$, where $\pi$ (the distribution of vertices) is an abstract probability measure on some abstract measurable space $(X,\mathcal A)$ and $\mu$ (the distribution of edges) is an abstract finite measure on the square $(X,\mathcal A)^2$.
Morphisms are random maps between the underlying measurable spaces which preserve the distribution of vertices as well as the distribution of edges.
We also define a convergence notion (inspired by s-convergence) for sequences of graph limits.
We apply tools from category theory to prove the compactness of the space of all graph limits.

\ 

\noindent
MSC:
05C80, 
60B10, 
05C35. 

\noindent
Keywords: Square-graphon, graph limit, Markov kernel.
\end{abstract}


\section{Introduction}

It is quite common to  apply category theory in the study of graphs, see e.g. the early survey paper~\cite{Hell}, or the more recent monograph~\cite{HN}.
We devote this paper to showing that category theory may be useful in the world of graph limits.
As it is emphasized in the preface of~\cite{HN}, not only graphs themselves, but also graph homomorphisms should be of central interest in graph theory.
This resembles the general approach from abstract category theory, where morphisms are (at least) at the same level of interest as objects.
This leads to the question whether there is some natural notion of a morphism between two graph limit objects.
We provide such a notion and we use it to prove the compactness of the space of all graph limits (see \Cref{thm:existence}).
While we still have to deal with certain technical difficulties, the main idea of the presented proof is quite straightforward and, as we believe, it nicely demonstrates that category theory may provide a very useful theoretical framework for the study of certain problems arising in the theory of graph limits.

We define graph limits, which we call \sqg s, as pairs $(\pi,\mu)$, where $\pi$ is an abstract probability measure on some abstract measurable space $(X,\A)$ and $\mu$ is an abstract finite measure on the square $(X,\A)^2$.
This is an obvious generalization of graphs: the measure $\pi$ describes the distribution of vertices, while the measure $\mu$ describes the distribution of edges.
Convergence of sequences of \sqg s (and, in particular, convergence of graph sequences) is defined in a very similar way as in~\cite{KLS}.
However, in our abstract setting, it immediately seems to be a good idea to employ the language of category theory.
To accomplish that, we establish morphisms between some pairs of \sqg s.
We define them as random maps (formally represented by Markov kernels) between the underlying measurable spaces which, in a certain sense, preserve the distribution of vertices as well as the distribution of edges.
The values of these random maps are not given deterministically; we only know the probability that a given point is mapped to a given (measurable) set.
This is not a new idea.
Indeed, it was discovered by Lawvere in an unpublished paper~\cite{Lawvere} (and later described by Giry in~\cite{Giry}) that one can define a category with measurable spaces as objects and Markov kernels as morphisms.
For more information on this category, the interested reader may also consult the paper~\cite{DaSi}, where applications to probabilistic programming are presented.
The preservation of measures by Markov kernels was also considered in the above papers, so the only (but crucial) novelty of our definition is the requirement that two measures (the distribution of vertices and the distribution of edges) are preserved at the same time.

The main advantage of our approach is that we do not restrict ourselves to \sqg s on some fixed measurable space.
This allows us to construct \sqg s whose underlying measurable spaces are, for example, products of some given families of other measurable spaces.
Representing such \sqg s, let us say, on the unit interval would only cause unnecessary technical difficulties.
This advantage is clearly demonstrated in the proof of \Cref{thm:existence}, where we prove the compactness of the space of all graph limits.
Namely, the limit of the convergent sequence $(\pi_n,\mu_n)_{n=1}^\infty$ from \Cref{thm:existence} is constructed as a \sqg\ on the infinite product of finite measurable spaces.

Although our definition of convergence, described by convergence of all $k$-shapes in the corresponding Vietoris topologies, is heavily inspired by s-convergence from~\cite{KLS}, let us emphasize one subtle difference.
For every $k\in\N$, we define the $k$-shape of a given \sqg\ as a set of certain weighted graphs on $k$ vertices, where weights are assigned to all edges and to all vertices.
In~\cite{KLS}, the $k$-shape of a given s-graphon can be also interpreted as a set of certain weighted graphs on $k$ vertices, but weights are assigned to edges only.
This corresponds to the fact that we define \sqg s as pairs of measures (one of them representing vertices and the other representing edges), while s-graphons are defined as single measures (representing edges) on the square of the unit interval (where the distribution of vertices is fixed as the Lebesgue measure on the unit interval).
Nevertheless, in \Cref{sec:s-graphons}, we show that this little difference has no real effect on the notion of convergence.

Let us say a few words about the overall strategy of the proof of the compactness of all graph limits (\Cref{thm:existence}).
For a given convergent sequence of \sqg s, we are asked to find its limit.
By the definition of convergence, we know exactly how the $k$-shapes of the limit should look like.
We will use this information to construct two inverse systems of measure spaces (which correspond to the distributions of edges and vertices, respectively) and we find their inverse limits.
These two inverse limits will describe the distribution of vertices and edges, respectively, of a new \sqg.
By the construction, it will follow that this new \sqg\ is, indeed, a limit of our fixed sequence.

In \Cref{sec:s-graphons}, we compare our definition of convergence with s-convergence from~\cite{KLS} and, as a corollary, we reprove the compactness of the space of all s-graphons (which was one of the main results of~\cite{KLS}).

Finally, in \Cref{sec:finalRemarks}, we characterize isomorphisms in the category of all \sqg s (on reasonable measurable spaces) and we start exploring the equivalence relation of `having the same $k$-shapes'.

Some of our preliminary results (including most of the lemmata in \Cref{sec:preliminaries} and \ref{sec:category}) follow readily from well-known facts in category theory.
Nevertheless, we aim to keep the preliminary parts of the paper as elementary as possible, so that readers from other areas can easily follow our arguments.
For this reason, our proofs of the preliminary results rely primarily on measure-theoretic methods, ensuring that the paper remains accessible even to readers with little or no background in category theory.

\section{Preliminaries}
\label{sec:preliminaries}

\subsection{Markov kernels}

A \emph{Markov kernel} from a measurable space $(X,\A)$ to another measurable space $(Y,\B)$ is a map $\kappa\colon X\times\B\to[0,1]$ with the following properties:
\begin{itemize}
\item for every $x\in X$, the map $\kappa(x,\cdot)$ is a probability measure on $(Y,\B)$,
\item for every $B\in\B$, the map $\kappa(\cdot,B)$ is measurable.
\end{itemize}

To simplify the notation, for a bounded measurable function $f\colon Y\to\R$ and $x\in X$, we write shortly $\int_Yf(y)\kappa(x,dy)$ instead of $\int_Yf(y)\,d\kappa(x,\cdot)(y)$.

\medskip

We will often interpret maps between measurable spaces as Markov kernels in the following way.
Let $(X,\A)$, $(Y,\B)$ be measurable spaces. A measurable map $f\colon X\to Y$ is represented by the Markov kernel $\kappa_f$ from $(X,\A)$ to $(Y,\B)$ given by
\begin{equation*}
\kappa_f(x,B)=
\begin{cases}
1,&f(x)\in B,\\
0,&f(x)\notin B,
\end{cases}
\quad x\in X,B\in\B.
\end{equation*}
So, the probability measure $\kappa(x,\cdot)$ is the Dirac measure concentrated at $f(x)$ whenever the singleton set $\{f(x)\}$ is measurable.

\medskip

For later references, we present some auxiliary results here.

Let $L$ be a non-empty finite set.
Let $(X,\A)$ and $(Y_l,\B_l)$, $l\in L$, be measurable spaces.
Let $\kappa_l$ be a Markov kernel from $(X,\A)$ to $(Y_l,\B_l)$, $l\in L$.
Then we define a map $\prod_{l\in L}\kappa_l\colon X\times(\prod_{l\in L}\B_l)\to[0,1]$ (here, $\prod_{l\in L}\B_l$ denotes the corresponding product $\sigma$-algebra) by
\begin{equation}
\label{eq:soucinKap}
\Big(\prod_{l\in L}\kappa_l\Big)(x,R)=\Big(\prod_{l\in L}\kappa_l(x,\cdot)\Big)(R),\quad x\in X,R\in\prod_{l\in L}\B_l,
\end{equation}
where $\prod_{l\in L}\kappa_l(x,\cdot)$ is the corresponding product measure on the measurable space $\prod_{l\in L}\big(Y_l,\B_l\big)$.

\begin{lemma}
\label{lem:measurability}
Let $L$ be a non-empty finite set.
Let $(X,\A)$ and $(Y_l,\B_l)$, $l\in L$, be measurable spaces.
Let $\kappa_l$ be a Markov kernel from $(X,\A)$ to $(Y_l,\B_l)$, $l\in L$.
Then the map $\prod_{l\in L}\kappa_l$ defined by~\eqref{eq:soucinKap} is a Markov kernel from $(X,\A)$ to $\prod_{l\in L}(Y_l,\B_l)$.
\end{lemma}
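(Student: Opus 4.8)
The plan is to verify the two defining properties of a Markov kernel for $\prod_{l\in L}\kappa_l$ separately; the first is immediate, and the second is where the actual work lies. For the first property, fix $x\in X$. By assumption each $\kappa_l(x,\cdot)$ is a probability measure on $(Y_l,\B_l)$, and a finite product of probability measures is a well-defined probability measure on the product $\sigma$-algebra $\prod_{l\in L}\B_l$. Hence $\big(\prod_{l\in L}\kappa_l\big)(x,\cdot)$ is a probability measure on $\prod_{l\in L}(Y_l,\B_l)$, as required.

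The substance of the lemma is the second property: for a fixed $R\in\prod_{l\in L}\B_l$, the map $x\mapsto\big(\prod_{l\in L}\kappa_l\big)(x,R)$ must be measurable. I would first settle this on measurable rectangles. If $R=\prod_{l\in L}B_l$ with $B_l\in\B_l$, then by the defining property of the product measure, $\big(\prod_{l\in L}\kappa_l\big)(x,R)=\prod_{l\in L}\kappa_l(x,B_l)$, a finite product of the functions $x\mapsto\kappa_l(x,B_l)$; each of these is measurable because $\kappa_l$ is a Markov kernel, and a finite product of measurable real-valued functions is measurable. So measurability holds on the class $\mathcal R$ of all measurable rectangles.

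To pass from rectangles to arbitrary elements of the product $\sigma$-algebra, I would invoke Dynkin's $\pi$-$\lambda$ theorem. The class $\mathcal R$ is a $\pi$-system generating $\prod_{l\in L}\B_l$. Let $\mathcal D$ be the collection of all $R\in\prod_{l\in L}\B_l$ for which $x\mapsto\big(\prod_{l\in L}\kappa_l\big)(x,R)$ is measurable. Using that each $\big(\prod_{l\in L}\kappa_l\big)(x,\cdot)$ is a probability measure (from the first part), one checks that $\mathcal D$ is a $\lambda$-system: it contains the whole product space; it is closed under complements since $\big(\prod_{l\in L}\kappa_l\big)(x,R^c)=1-\big(\prod_{l\in L}\kappa_l\big)(x,R)$; and it is closed under increasing countable unions (equivalently, under countable disjoint unions) because $\big(\prod_{l\in L}\kappa_l\big)(x,\cdot)$ is countably additive, so the value in question is a pointwise limit (respectively, a countable sum) of measurable functions, hence measurable. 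Since $\mathcal D\supseteq\mathcal R$ and $\mathcal R$ is a $\pi$-system, the $\pi$-$\lambda$ theorem yields $\mathcal D\supseteq\sigma(\mathcal R)=\prod_{l\in L}\B_l$, which is exactly the desired measurability.

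I expect the main obstacle to be precisely this extension step: the rectangle case is an easy computation, but one must be careful that the family of ``good'' sets forms a genuine $\lambda$-system, which relies on having already established that $\big(\prod_{l\in L}\kappa_l\big)(x,\cdot)$ is a countably additive probability measure for each $x$. An alternative would be to induct on $|L|$ and treat the two-factor case by a Fubini-type sectioning argument, but that route seems to demand more delicate joint-measurability bookkeeping than the direct $\pi$-$\lambda$ approach.
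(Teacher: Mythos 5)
Your proposal is correct and follows essentially the same route as the paper: measurability is checked on measurable rectangles via finite products of measurable functions, and then extended to the whole product $\sigma$-algebra by the $\pi$-$\lambda$ theorem applied to the collection of ``good'' sets, which is closed under complements and countable disjoint unions. The paper treats the probability-measure part as immediate and leaves the $\lambda$-system verification as an easy check, so your write-up just spells out the same steps in more detail.
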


\begin{proof}
The only nontrivial thing to prove is that, for every fixed set $R\in\prod_{l\in L}\B_l$, the map
\begin{equation*}
x\mapsto\Big(\prod_{l\in L}\kappa_l(x,\cdot)\Big)(R),\quad x\in X,
\end{equation*}
is measurable.
Let $\mathcal R$ be the collection of all sets $R\in\prod_{l\in L}\B_l$ for which this holds true.
It is clear that all measurable rectangles (i.e., sets of the form $\prod_{l\in L}B_l$, where $B_l\in\B_l$, $l\in L$) belong to $\mathcal R$.
Moreover, it is easy to check that the collection $\mathcal R$ is closed under complements and countable disjoint unions.
So, by the $\pi$-$\lambda$ theorem (see~\cite[Theorem~10.1 (iii)]{Kechris}), it holds that $\mathcal R=\prod_{l\in L}\B_l$.
\end{proof}

Let $(X,\A)$, $(Y,\B)$ be measurable spaces.
Let $\kappa$ be a Markov kernel from $(X,\A)$ to $(Y,\B)$.
Then we define a map $\kappa^{\otimes 2}\colon X^2\times\B^2\to[0,1]$ (here, again, $\B^2$ is the corresponding product $\sigma$-algebra) by
\begin{equation}
\label{eq:mocninaKapy}
\kappa^{\otimes 2}((x_1,x_2),\widetilde B)=(\kappa(x_1,\cdot)\times\kappa(x_2,\cdot))(\widetilde B),\quad (x_1,x_2)\in X^2,\widetilde B\in\B^2.
\end{equation}
Similarly, for a measurable map $f\colon X\to Y$, let $f^{\otimes 2}\colon X^2\to Y^2$ be the (measurable) map given by
\[f^{\otimes 2}(x_1,x_2)=(f(x_1),f(x_2)),\quad(x_1,x_2)\in X^2.\]

\begin{example}
Let $(X,\A)$, $(Y,\B)$ be measurable spaces and $f\colon X\to Y$ be a measurable map.
Then $\kappa_f^{\otimes 2}=\kappa_{f^{\otimes 2}}$.
\end{example}

\begin{lemma}
Let $(X,\A)$, $(Y,\B)$ be measurable spaces.
Let $\kappa$ be a Markov kernel from $(X,\A)$ to $(Y,\B)$.
Then the map $\kappa^{\otimes 2}$ is a Markov kernel from $(X,\A)^2$ to $(Y,\B)^2$.
\end{lemma}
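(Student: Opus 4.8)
The plan is to verify the two defining properties of a Markov kernel from $(X,\A)^2$ to $(Y,\B)^2$. For every fixed $(x_1,x_2)\in X^2$, the set function $\kappa^{\otimes 2}((x_1,x_2),\cdot)=\kappa(x_1,\cdot)\times\kappa(x_2,\cdot)$ is the product of two probability measures, hence itself a probability measure on $(Y,\B)^2$; this settles the first property with no work. The entire content of the lemma therefore lies in showing that, for each fixed $\widetilde B\in\B^2$, the map $(x_1,x_2)\mapsto\kappa^{\otimes 2}((x_1,x_2),\widetilde B)$ is $\A^2$-measurable.

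Rather than repeat a $\pi$-$\lambda$ induction, I would reduce the statement directly to \Cref{lem:measurability}. Put $L=\{1,2\}$ and, for $l\in L$, define $\kappa_l$ on $X^2\times\B$ by $\kappa_l((x_1,x_2),B)=\kappa(x_l,B)$. I claim each $\kappa_l$ is a Markov kernel from $(X,\A)^2$ to $(Y,\B)$: for fixed $(x_1,x_2)$ it is just the probability measure $\kappa(x_l,\cdot)$, and for fixed $B\in\B$ the map $(x_1,x_2)\mapsto\kappa(x_l,B)$ is the composition of the coordinate projection $X^2\to X$, $(x_1,x_2)\mapsto x_l$ (which is $\A^2$-to-$\A$ measurable), with the $\A$-measurable map $x_l\mapsto\kappa(x_l,B)$, and is therefore measurable. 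With these kernels in hand, comparing definitions gives, for every $(x_1,x_2)\in X^2$ and every $R\in\B^2$,
\begin{equation*}
\Big(\prod_{l\in L}\kappa_l\Big)((x_1,x_2),R)=\big(\kappa(x_1,\cdot)\times\kappa(x_2,\cdot)\big)(R)=\kappa^{\otimes 2}((x_1,x_2),R),
\end{equation*}
so $\kappa^{\otimes 2}$ coincides with the product kernel $\prod_{l\in L}\kappa_l$ of~\eqref{eq:soucinKap}. By \Cref{lem:measurability} this product is a Markov kernel from $(X,\A)^2$ to $\prod_{l\in L}(Y,\B)=(Y,\B)^2$, which is exactly the assertion.

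The only genuine obstacle is the verification that the $\kappa_l$ are Markov kernels, and even there the sole point needing attention is the measurability of $(x_1,x_2)\mapsto\kappa(x_l,B)$; once \Cref{lem:measurability} is invoked, the hard measurability work (the $\pi$-$\lambda$ argument over the product $\sigma$-algebra) has already been carried out there. If one preferred to argue from scratch instead, the same $\pi$-$\lambda$ scheme as in \Cref{lem:measurability} would apply verbatim, the base case being that $\kappa^{\otimes 2}((x_1,x_2),B_1\times B_2)=\kappa(x_1,B_1)\,\kappa(x_2,B_2)$ is measurable in $(x_1,x_2)$ as a product of two measurable functions, and closure under complements and countable disjoint unions following from the probability normalization and countable additivity of the measures $\kappa(x_1,\cdot)\times\kappa(x_2,\cdot)$.
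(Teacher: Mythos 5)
Your proof is correct and follows the same route as the paper: the paper also observes that $((x_1,x_2),B)\mapsto\kappa(x_i,B)$ is a Markov kernel from $(X,\A)^2$ to $(Y,\B)$ for $i=1,2$ and then applies \Cref{lem:measurability}. You have merely spelled out the measurability of these auxiliary kernels (via composition with the coordinate projections), which the paper leaves as ``easy to check.''
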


\begin{proof}
It is easy to check that, for both $i=1,2$, the map
\begin{equation*}
((x_1,x_2),B)\mapsto\kappa(x_i,B),\quad(x_1,x_2)\in X^2,B\in\B,
\end{equation*}
is a Markov kernel from $(X,\A)^2$ to $(Y,\B)$.
Now it is enough to apply \Cref{lem:measurability}.
\end{proof}

\subsection{Categories}

By a \emph{category}, we mean a structure consisting of
\begin{itemize}
\item a collection of \emph{objects},
\item a collection of \emph{morphisms} from $a$ to $b$, whenever $a$ and $b$ are objects,
\item a partial binary operation $\circ$ on morphisms,
\end{itemize}
which satisfies the following conditions:
\begin{itemize}
\item $g\circ f$ is defined if and only if there are objects $a,b,c$ such that $f$ is a morphism from $a$ to $b$ and $g$ is a morphism from $b$ to $c$; in that case, $g\circ f$ is a morphism from $a$ to $c$,
\item if $f$ is a morphism from $a$ to $b$, $g$ is a morphism from $b$ to $c$ and $h$ is a morphism from $c$ to $d$ then $h\circ(g\circ f)=(h\circ g)\circ f$,
\item for every object $a$ there is a morphism $1_a$ from $a$ to $a$ such that, for every object $b$, every morphism $f$ from $a$ to $b$ and every morphism $g$ from $b$ to $a$, it holds that $f\circ 1_a=f$ and $1_a\circ g=g$.
\end{itemize}
The operation $\circ$ is called \emph{composition} and the morphism $1_a$ is called the \emph{identity morphism} for $a$.

A morphism $f$ from $a$ to $b$ is called an \emph{isomorphism} if there exists a morphism $g$ from $b$ to $a$ such that $g\circ f=1_a$ and $f\circ g=1_b$.

\medskip

The following example will play a crucial role when we introduce our category of graph limits in \Cref{sec:category}.

\begin{example}
\label{ex:meas}
In the unpublished paper~\cite{Lawvere} (and in the later paper~\cite{Giry}), the category of measurable spaces was defined as follows:
\begin{itemize}
\item objects are measurable spaces,
\item morphisms are Markov kernels,
\item the composition of a morphism $\kappa$ from $(X,\A)$ to $(Y,\B)$ and a morphism $\kappa'$ from $(Y,\B)$ to $(Z,\C)$ is given by
\begin{equation}
\label{eq:compositionOfMorphisms}
\kappa'\circ\kappa(x,C)=\int_Y\kappa'(y,C)\kappa(x,dy),\quad x\in X,C\in\C.
\end{equation}
\end{itemize}

Let us also mention that, in~\cite{Tobias}, this category (explicitly described in Section 4 of that paper) is shown to be a special case of a so called Markov category.
\end{example}

\begin{example}
Suppose that $(X,\A)$, $(Y,\B)$, $(Z,\C)$ are measurable spaces.
Let $f\colon X\to Y$ and $f'\colon Y\to Z$ be measurable maps.
Then $\kappa_{f'}\circ\kappa_f=\kappa_{f'\circ f}$, where $\kappa_{f'}\circ\kappa_f$ is defined as in~\eqref{eq:compositionOfMorphisms}.
\end{example}

Later, we will need the following basic fact.

\begin{lemma}
\label{l:otimes}
Let $(X,\A)$, $(Y,\B)$, $(Z,\C)$ be measurable spaces.
Let $\kappa$ a Markov kernel from $(X,\A)$ to $(Y,\B)$ and $\kappa'$ be a Markov kernel from $(Y,\B)$ to $(Z,\C)$.
Then
\begin{equation*}
(\kappa'\circ\kappa)^{\otimes 2}=\kappa'^{\otimes 2}\circ\kappa^{\otimes 2},
\end{equation*}
where $\circ$ is the operation given by~\eqref{eq:compositionOfMorphisms}.
\end{lemma}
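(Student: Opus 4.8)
The plan is to verify the identity by evaluating both sides on measurable rectangles and then passing to all of $\C^2$ by a $\pi$-$\lambda$ argument. First note that both sides are indeed Markov kernels from $(X,\A)^2$ to $(Z,\C)^2$: the left-hand side by applying the preceding lemma (that $\kappa^{\otimes 2}$ is a Markov kernel) to the composite kernel $\kappa'\circ\kappa$, and the right-hand side because it is the composition, in the sense of~\eqref{eq:compositionOfMorphisms}, of the Markov kernels $\kappa^{\otimes 2}$ (from $(X,\A)^2$ to $(Y,\B)^2$) and $\kappa'^{\otimes 2}$ (from $(Y,\B)^2$ to $(Z,\C)^2$). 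Since the measurable rectangles $C_1\times C_2$ with $C_1,C_2\in\C$ form a $\pi$-system generating $\C^2$, and the family of sets $\widetilde C\in\C^2$ on which the two kernels agree is closed under complements and countable disjoint unions, it suffices—exactly as in the proof of \Cref{lem:measurability}—to check equality on rectangles $\widetilde C=C_1\times C_2$.

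Next I would compute each side on such a rectangle. Using the definition~\eqref{eq:mocninaKapy} and the product-measure formula on rectangles, the left-hand side unfolds as
\[
(\kappa'\circ\kappa)^{\otimes 2}\big((x_1,x_2),C_1\times C_2\big)=(\kappa'\circ\kappa)(x_1,C_1)\cdot(\kappa'\circ\kappa)(x_2,C_2),
\]
and expanding each factor via~\eqref{eq:compositionOfMorphisms} rewrites this as a product of two one-dimensional integrals against $\kappa(x_1,\cdot)$ and $\kappa(x_2,\cdot)$. For the right-hand side, unfolding~\eqref{eq:compositionOfMorphisms} and then~\eqref{eq:mocninaKapy} gives
\[
\int_{Y^2}\kappa'(y_1,C_1)\,\kappa'(y_2,C_2)\;d\big(\kappa(x_1,\cdot)\times\kappa(x_2,\cdot)\big)(y_1,y_2),
\]
where I have used that $\kappa^{\otimes 2}((x_1,x_2),\cdot)$ is precisely the product measure $\kappa(x_1,\cdot)\times\kappa(x_2,\cdot)$ and that $\kappa'^{\otimes 2}((y_1,y_2),C_1\times C_2)=\kappa'(y_1,C_1)\kappa'(y_2,C_2)$. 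The integrand factors as a function of $y_1$ times a function of $y_2$, so Tonelli's theorem splits the integral over $Y^2$ into the product of the very same two one-dimensional integrals obtained for the left-hand side. This yields equality on rectangles, and the $\pi$-$\lambda$ step then gives the claim.

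I do not anticipate any real obstacle: all the quantities in sight are bounded by $1$, so the appeal to Tonelli/Fubini is unproblematic, and the reduction to rectangles simply reuses the measurability machinery already set up for \Cref{lem:measurability}. The only point deserving a line of care is confirming that the family of sets on which the two kernels agree is genuinely a $\lambda$-system, which follows from the finite additivity and the monotone continuity of the finite measures involved.
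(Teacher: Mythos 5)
Your proposal is correct and follows essentially the same route as the paper's proof: evaluate both sides on a rectangle $C_1\times C_2$, use the product-measure/Tonelli identity to turn the product of the two one-dimensional integrals into a single integral over $Y^2$ against $\kappa^{\otimes 2}((x_1,x_2),\cdot)$, and then conclude by uniqueness of finite measures agreeing on the generating $\pi$-system of rectangles. The only cosmetic difference is that the paper phrases the final extension step as two probability measures coinciding on rectangles rather than as a $\lambda$-system of sets of agreement, which is the same argument.
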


\begin{proof}
For every $(x_1,x_2)\in X^2$ and every $C_1,C_2\in\C$, it holds that
\begin{equation*}
\begin{split}
&(\kappa'\circ\kappa)^{\otimes 2}((x_1,x_2),C_1\times C_2)\\
=&\kappa'\circ\kappa(x_1,C_1)\kappa'\circ\kappa(x_2,C_2)\\
=&\int_Y\kappa'(y_1,C_1)\kappa(x_1,dy_1)\int_Y\kappa'(y_2,C_2)\kappa(x_2,dy_2)\\
=&\int_{Y^2}\kappa'^{\otimes 2}((y_1,y_2),C_1\times C_2)\kappa^{\otimes 2}((x_1,x_2),d(y_1,y_2))\\
=&\kappa'^{\otimes 2}\circ\kappa^{\otimes 2}((x_1,x_2),C_1\times C_2).
\end{split}
\end{equation*}
So, for every $(x_1,x_2)\in X^2$, the probability measures $(\kappa'\circ\kappa)^{\otimes 2}((x_1,x_2),\cdot)$ and $\kappa'^{\otimes 2}\circ\kappa^{\otimes 2}((x_1,x_2),\cdot)$ coincide on the collection of all measurable rectangles, and so they must be the same.
The conclusion follows.
\end{proof}

\subsection{Pushforward measures}

Let $(X,\A)$, $(Y,\B)$ be measurable spaces and $\kappa$ be a Markov kernel from $(X,\A)$ to $(Y,\B)$.
Let $\pi$ be a measure on $(X,\A)$.
Then we define the \emph{pushforward} $\kappa_*\pi$ of the measure $\pi$ along the Markov kernel $\kappa$ as the measure on $(Y,\B)$ given by
\begin{equation}
\label{eq:pushforward}
\kappa_*\pi(B)=\int_X\kappa(x,B)\,d\pi(x),\quad B\in\B.
\end{equation}

\begin{example}
Let $(X,\A)$, $(Y,\B)$ be measurable spaces and $f\colon X\to Y$ be a measurable map.
Let $\pi$ be a measure on $(X,\A)$ and $f_*\pi$ be the (classical) pushforward of $\pi$ along $f$.
Then $f_*\pi=(\kappa_f)_*\pi$.
\end{example}

We will need the following basic results.

\begin{proposition}
\label{ex:symmetryPositive}
Let $(X,\A)$ and $(Y,\B)$ be measurable spaces, $\kappa$ be a Markov kernel from $(X,\A)$ to $(Y,\B)$ and $\mu$ be a measure on the measurable space $(X,\A)^2$.
Suppose that the measure $\mu$ is symmetric, that is, 
for every $A_1,A_2\in\A$, it holds that $\mu(A_1\times A_2)=\mu(A_2\times A_1)$.
Then the measure $\kappa^{\otimes2}_*\mu$ on the measurable space $(Y,\B)^2$ is symmetric, as well.
\end{proposition}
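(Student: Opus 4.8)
The plan is to show that the pushforward measure $\kappa^{\otimes2}_*\mu$ satisfies the symmetry condition on all measurable rectangles, and then argue that this forces symmetry on the full product $\sigma$-algebra. Fix arbitrary $B_1,B_2\in\B$. I want to prove that $\kappa^{\otimes2}_*\mu(B_1\times B_2)=\kappa^{\otimes2}_*\mu(B_2\times B_1)$. Writing out the definition of the pushforward from~\eqref{eq:pushforward} together with the definition of $\kappa^{\otimes 2}$ from~\eqref{eq:mocninaKapy}, the left-hand side unfolds to
\begin{equation*}
\kappa^{\otimes2}_*\mu(B_1\times B_2)=\int_{X^2}\kappa^{\otimes 2}((x_1,x_2),B_1\times B_2)\,d\mu(x_1,x_2)=\int_{X^2}\kappa(x_1,B_1)\kappa(x_2,B_2)\,d\mu(x_1,x_2).
\end{equation*}

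The key step is then to exploit the symmetry of $\mu$ to swap the roles of the two coordinates. Concretely, I would consider the coordinate-swap map $s\colon X^2\to X^2$ given by $s(x_1,x_2)=(x_2,x_1)$; the hypothesis that $\mu(A_1\times A_2)=\mu(A_2\times A_1)$ for all $A_1,A_2\in\A$ says exactly that $s_*\mu$ and $\mu$ agree on measurable rectangles, hence (by the same $\pi$-$\lambda$ argument used in \Cref{lem:measurability}) that $s_*\mu=\mu$. Applying the change-of-variables formula along $s$ to the integrand $(x_1,x_2)\mapsto\kappa(x_1,B_1)\kappa(x_2,B_2)$ turns the integral above into $\int_{X^2}\kappa(x_2,B_1)\kappa(x_1,B_2)\,d\mu(x_1,x_2)$, which is precisely $\kappa^{\otimes2}_*\mu(B_2\times B_1)$. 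Thus the two pushforward values agree on every measurable rectangle.

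Finally I would upgrade this rectangle-level equality to the asserted symmetry. The statement to be proven only quantifies over rectangles $B_1\times B_2$, so in fact the computation above already delivers the conclusion directly; no further extension is needed. (Were one to want symmetry of $\kappa^{\otimes2}_*\mu$ as a genuine identity of measures on all of $\B^2$, one would note that $B_1\times B_2\mapsto\kappa^{\otimes2}_*\mu(B_1\times B_2)$ and its transpose are two finite measures agreeing on the $\pi$-system of measurable rectangles, and invoke the $\pi$-$\lambda$ theorem once more.)

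I do not expect any serious obstacle here: the whole content is the Fubini-style factorization of $\kappa^{\otimes2}$ on rectangles combined with the change of variables induced by coordinate swapping. The only point requiring a little care is justifying that $s_*\mu=\mu$ from its action on rectangles, which is a routine $\pi$-$\lambda$ argument, and ensuring the integrals are finite so that the manipulations are legitimate—this is guaranteed since $\mu$ is a finite measure and the integrands are bounded by $1$.
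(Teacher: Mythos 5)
Your proposal is correct and follows essentially the same route as the paper: unfold $\kappa^{\otimes2}_*\mu$ on a rectangle $B_1\times B_2$ as $\int_{X^2}\kappa(x_1,B_1)\kappa(x_2,B_2)\,d\mu$, use the symmetry of $\mu$ to swap the coordinates, and recognize the result as $\kappa^{\otimes2}_*\mu(B_2\times B_1)$; since the paper's notion of symmetry only quantifies over rectangles, this already finishes the proof. The only difference is that you explicitly justify the middle equality via $s_*\mu=\mu$ and change of variables (a step the paper leaves implicit), which is a fine and indeed slightly more careful presentation.
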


\begin{proof}
For every $B_1,B_2\in\B$, we have
\begin{equation*}
\begin{split}
\kappa^{\otimes2}_*\mu(B_1\times B_2)=&\int_{X^2}\kappa(x_1,B_1)\kappa(x_2,B_2)\,d\mu(x_1,x_2)\\
=&\int_{X^2}\kappa(x_1,B_2)\kappa(x_2,B_1)\,d\mu(x_1,x_2)\\
=&\kappa^{\otimes2}_*\mu(B_2\times B_1).
\end{split}
\end{equation*}
\end{proof}

\begin{lemma}
\label{lem:zachovaniMiry}
Let $(X,\A)$, $(Y,\B)$ be measurable spaces and $\kappa$ be a Markov kernel from $(X,\A)$ to $(Y,\B)$.
Let $\pi$ be a measure on $(X,\A)$.
Then $\kappa_*\pi(Y)=\pi(X)$.
\end{lemma}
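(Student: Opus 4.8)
The plan is to compute $\kappa_*\pi(Y)$ directly from the definition of pushforward in~\eqref{eq:pushforward} and use the defining property of a Markov kernel. First I would write
\[
\kappa_*\pi(Y)=\int_X\kappa(x,Y)\,d\pi(x).
\]
The key observation is that, for every $x\in X$, the map $\kappa(x,\cdot)$ is a \emph{probability} measure on $(Y,\B)$ by the first defining property of a Markov kernel, and hence $\kappa(x,Y)=1$ for every $x\in X$. Substituting this constant value into the integral reduces the right-hand side to $\int_X 1\,d\pi(x)$, which is exactly $\pi(X)$. This immediately gives $\kappa_*\pi(Y)=\pi(X)$.

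I do not anticipate any genuine obstacle here: the statement is a one-line consequence of the fact that Markov kernels are normalized fiberwise. The only point deserving a word of care is that $\pi$ is merely a (finite) measure rather than a probability measure, so I should not assume $\pi(X)=1$; but this causes no difficulty, since the computation $\int_X 1\,d\pi(x)=\pi(X)$ holds for any measure. One could optionally remark that the finiteness of $\pi$ (which is part of the standing setup for the distribution of vertices) guarantees all quantities involved are finite, though the identity is formally valid regardless.
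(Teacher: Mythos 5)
Your proof is correct and is exactly the paper's argument: apply the definition of the pushforward in~\eqref{eq:pushforward}, note that $\kappa(x,Y)=1$ since each $\kappa(x,\cdot)$ is a probability measure, and integrate the constant to obtain $\pi(X)$. No difference in approach and no gaps.
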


\begin{proof}
We have
\[
\kappa_*\pi(Y)=\int_X\kappa(x,Y)\,d\pi(x)=\pi(X).
\]
\end{proof}

\begin{lemma}
\label{lem:obrazMiry1}
Let $(X,\A),(Y,\B)$ be measurable spaces.
Let $\pi$ be a measure on $(X,\A)$ and $\kappa$ be a Markov kernel from $(X,\A)$ to $(Y,\B)$.
Then, for every measurable function $f\colon Y\to[0,\infty)$, it holds that
\begin{equation}
\label{eq:obrazFunkce}
\int_Yf(y)\,d\kappa_*\pi(y)=\int_X\int_Yf(y)\kappa(x,dy)\,d\pi(x).
\end{equation}
\end{lemma}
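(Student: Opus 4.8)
The plan is to use the standard approximation argument of measure theory (the ``standard machine''): first verify the identity for indicator functions, then extend it by linearity to nonnegative simple functions, and finally pass to an arbitrary nonnegative measurable $f$ via the monotone convergence theorem.

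First I would treat the base case $f=\mathbf 1_B$ for $B\in\B$. The left-hand side of~\eqref{eq:obrazFunkce} is then simply $\kappa_*\pi(B)$, while the inner integral on the right-hand side equals $\int_Y\mathbf 1_B(y)\kappa(x,dy)=\kappa(x,B)$, so that the right-hand side becomes $\int_X\kappa(x,B)\,d\pi(x)$. These two quantities agree by the very definition of the pushforward in~\eqref{eq:pushforward}. Note that for the right-hand side to even make sense one needs $x\mapsto\kappa(x,B)$ to be measurable, which is precisely the second defining property of a Markov kernel. Next, by linearity of the integral in all the relevant places (the inner integral over $Y$, the outer integral over $\pi$, and the pushforward), the identity extends to every nonnegative simple function $f=\sum_{i=1}^n c_i\mathbf 1_{B_i}$ with $c_i\ge 0$ and $B_i\in\B$; in particular $x\mapsto\int_Y f(y)\kappa(x,dy)=\sum_{i=1}^n c_i\kappa(x,B_i)$ is measurable as a finite nonnegative combination of measurable functions.

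Finally, for a general measurable $f\colon Y\to[0,\infty)$ I would choose an increasing sequence of nonnegative simple functions $s_k\uparrow f$ pointwise. For each fixed $x\in X$, the monotone convergence theorem applied to the probability measure $\kappa(x,\cdot)$ gives $\int_Y s_k(y)\kappa(x,dy)\uparrow\int_Y f(y)\kappa(x,dy)$; in particular the limit function $x\mapsto\int_Y f(y)\kappa(x,dy)$ is measurable, being a pointwise limit of measurable functions, so the right-hand side of~\eqref{eq:obrazFunkce} is well-defined. Applying the monotone convergence theorem once more---now to the outer integral over $\pi$ and, on the left, to the integral over $\kappa_*\pi$---together with the already established identity for the $s_k$, yields
\[
\int_Y f\,d\kappa_*\pi=\lim_k\int_Y s_k\,d\kappa_*\pi=\lim_k\int_X\int_Y s_k(y)\kappa(x,dy)\,d\pi(x)=\int_X\int_Y f(y)\kappa(x,dy)\,d\pi(x),
\]
which is the desired equality.

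The base case and the linearity step are entirely routine. The one point demanding care---and the main obstacle---is the measurability of the inner integral $x\mapsto\int_Y f(y)\kappa(x,dy)$, which must be tracked through each stage of the approximation in order for the outer integral to be meaningful and for the two applications of the monotone convergence theorem to be justified. This is exactly where the measurability clause in the definition of a Markov kernel enters, propagated from indicators to simple functions and then to the pointwise limit.
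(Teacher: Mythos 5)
Your proof is correct and follows exactly the paper's own argument: verify the identity for indicators via the definition of the pushforward, extend by linearity to simple functions, and conclude by monotone convergence. The only difference is that you spell out the measurability of $x\mapsto\int_Yf(y)\kappa(x,dy)$ at each stage, which the paper leaves implicit.
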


\begin{proof}
If $f$ is a characteristic function of a measurable set, then~\eqref{eq:obrazFunkce} is just a reformulation of~\eqref{eq:pushforward}.
If $f$ is a simple function, we apply the linearity of integration.
The general case follows by monotone convergence theorem.
\end{proof}

\begin{corollary}
\label{cor:slozeni}
Let $(X,\A),(Y,\B),(Z,\C)$ be measurable spaces.
Let $\pi$ be a measure on $(X,\A)$.
Let $\kappa$ be a Markov kernel from $(X,\A)$ to $(Y,\B)$ and $\kappa'$ be a Markov kernel from $(Y,\B)$ to $(Z,\C)$.
Then
\begin{equation*}
(\kappa'\circ\kappa)_*\pi=\kappa'_*(\kappa_*\pi),
\end{equation*}
where $\kappa'\circ\kappa$ is the Markov kernel from $(X,\A)$ to $(Z,\C)$ given by~\eqref{eq:compositionOfMorphisms}.
\end{corollary}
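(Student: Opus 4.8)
The plan is to verify the identity set-by-set: fix an arbitrary $C\in\C$ and show that both sides assign it the same value. First I would unwind the left-hand side directly from the definition of the pushforward in~\eqref{eq:pushforward}, obtaining
\begin{equation*}
(\kappa'\circ\kappa)_*\pi(C)=\int_X(\kappa'\circ\kappa)(x,C)\,d\pi(x).
\end{equation*}
Next I would substitute the definition of the composed kernel from~\eqref{eq:compositionOfMorphisms}, which expresses $(\kappa'\circ\kappa)(x,C)$ as the inner integral $\int_Y\kappa'(y,C)\kappa(x,dy)$, so that the left-hand side becomes the iterated integral $\int_X\int_Y\kappa'(y,C)\kappa(x,dy)\,d\pi(x)$.

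The key step is then to apply \Cref{lem:obrazMiry1} with the function $f(y)=\kappa'(y,C)$. This function is exactly of the form required by that lemma: by the second defining property of a Markov kernel, $y\mapsto\kappa'(y,C)$ is measurable, and since it takes values in $[0,1]$ it is in particular a measurable function into $[0,\infty)$. \Cref{lem:obrazMiry1} therefore identifies the iterated integral above with $\int_Y\kappa'(y,C)\,d\kappa_*\pi(y)$. Recognizing this last expression, once more via the definition~\eqref{eq:pushforward} applied to the measure $\kappa_*\pi$ on $(Y,\B)$ and the kernel $\kappa'$, as $\kappa'_*(\kappa_*\pi)(C)$ completes the computation.

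I do not expect any genuine obstacle here, since the result is essentially an associativity bookkeeping statement that reduces to a single application of \Cref{lem:obrazMiry1}. The only point deserving attention is ensuring that the hypotheses of that lemma are met, i.e.\ that $\kappa'(\cdot,C)$ is a legitimate nonnegative measurable integrand; this is guaranteed precisely by the measurability axiom in the definition of a Markov kernel, so no further regularity assumptions on $\pi$ or on the spaces are needed. Since $C\in\C$ was arbitrary, the equality of the two measures follows.
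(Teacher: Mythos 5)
Your proposal is correct and follows exactly the same route as the paper's own proof: fix $C\in\C$, unwind both pushforward definitions, and apply \Cref{lem:obrazMiry1} to the nonnegative measurable integrand $\kappa'(\cdot,C)$. Your extra remark verifying that this integrand satisfies the hypotheses of that lemma is a sound (if implicit in the paper) bit of diligence.
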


\begin{proof}
For every $C\in\C$, it holds that
\begin{equation*}
\begin{split}
(\kappa'\circ\kappa)_*\pi(C)=&\int_X\kappa'\circ\kappa(x,C)\,d\pi(x)\\
=&\int_X\int_Y\kappa'(y,C)\kappa(x,dy)\,d\pi(x)\\
\stackrel{\Cref{lem:obrazMiry1}}{=}&\int_Y\kappa'(y,C)\,d\kappa_*\pi(y)\\
=&\kappa'_*(\kappa_*\pi)(C).
\end{split}
\end{equation*}
\end{proof}

\subsection{Inverse systems of measures and Kolmogorov Extension Theorem}
\label{sec:inverse}

Inverse systems of measure spaces, as well as their inverse limits, were introduced in~\cite[Chapter~5]{Bochner} as a natural generalization of product measures.
Although inverse limits of inverse systems of measure spaces do not always exist (see, e.g., \cite{AJ}), there are many instances where the existence is guaranteed.
The aim of this subsection is to recall one such case in which the inverse system is of a very special form.
In fact, it is just a special case of a result widely known as the Kolmogorov Extension Theorem (see e.g.~\cite[Theorem~2.4.3]{Tao}).

To keep things simple, we refrain from repeating the general definitions of inverse systems of measure spaces and their inverse limits (the interested reader can find them in~\cite{Bochner}, or in~\cite{Choksi}).
Instead, we consider only a simplified case, given by the following additional assumptions:
First, we assume that the inverse system is indexed by a countable set.
Second, we assume that all the measure spaces from the inverse system have finite underlying sets.
Third, we assume that all the bounding maps of the inverse system are projections (which also requires to assume a special form of the underlying sets).
The second condition (which could be relaxed even more), together with the third one, allow us to directly apply the Kolmogorov Extension Theorem.
With all this in mind, we define only the following special cases of inverse systems of measure spaces and their inverse limits.

\medskip

Let $D$ be a countable index set.
For every $d\in D$, let $F_d$ be a non-empty finite set and $2^{F_d}$ be the discrete $\sigma$-algebra on $F_d$.
Let $\I$ be the set of all non-empty finite subsets of $D$.
For every $I\in\I$, let $\rho^I$ be a finite measure on $\prod_{d\in I}(F_d,2^{F_d})$.
Then, for the purposes of this paper, we say that the collection $(\rho^I)_{I\in\I}$ is a \emph{simple inverse system} of measures if
\begin{equation}
\label{eq:IS}
\rho^I=(P_{I,J})_*\rho^J,\quad I,J\in\I,I\subseteq J,
\end{equation}
where $P_{I,J}$ is the canonical projection from $\prod_{d\in J}F_d$ to $\prod_{d\in I}F_d$.
Moreover, we say that a finite measure $\rho$ on $\prod_{d\in D}(F_d,2^{F_d})$ is the \emph{inverse limit} of the simple inverse system $(\rho^I)_{I\in\I}$ if
\begin{equation}
\label{eq:IL}
\rho^I=(P_I)_*\rho,\quad I\in\I,
\end{equation}
where $P_I$ is the canonical projection from $\prod_{d\in D}F_d$ to $\prod_{d\in I}F_d$.

\begin{lemma}[Kolmogorov Extension Theorem]
\label{lem:existenceInverseLimit}
Every simple inverse system of measures has a unique inverse limit.
\end{lemma}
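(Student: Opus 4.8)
The plan is to deduce both existence and uniqueness from the classical Kolmogorov Extension Theorem \cite[Theorem~2.4.3]{Tao}, the only preparatory work being a reduction from finite measures to probability measures. The first step I would carry out is to observe that all the measures $\rho^I$ share one common total mass. Viewing the canonical projection $P_{I,J}$ as a Markov kernel and applying \Cref{lem:zachovaniMiry} together with the consistency relation~\eqref{eq:IS}, I get, for $I\subseteq J$,
\[
\rho^I\Big(\textstyle\prod_{d\in I}F_d\Big)=(P_{I,J})_*\rho^J\Big(\textstyle\prod_{d\in I}F_d\Big)=\rho^J\Big(\textstyle\prod_{d\in J}F_d\Big).
\]
Since any two elements of $\I$ are both contained in their union, this forces all the total masses to equal a single constant $c\ge 0$.

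For existence I would split into two cases. If $c=0$, then every $\rho^I$ is the zero measure and the zero measure $\rho$ on $\prod_{d\in D}(F_d,2^{F_d})$ plainly satisfies~\eqref{eq:IL}. If $c>0$, I would normalize by setting $\widehat\rho^{\,I}:=c^{-1}\rho^I$, which are probability measures; since pushforward is linear in the measure, \eqref{eq:IS} transfers verbatim to the family $(\widehat\rho^{\,I})_{I\in\I}$, so it is a consistent projective family in the classical sense. As each factor $F_d$ is a finite (hence standard Borel) space, the classical Kolmogorov Extension Theorem applies and yields a probability measure $\widehat\rho$ on the product with $(P_I)_*\widehat\rho=\widehat\rho^{\,I}$ for all $I\in\I$. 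Rescaling, I would put $\rho:=c\,\widehat\rho$, and then $(P_I)_*\rho=c\,\widehat\rho^{\,I}=\rho^I$, which is exactly~\eqref{eq:IL}.

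For uniqueness I would note that any inverse limit $\rho$ is already determined on all cylinder sets: for $I\in\I$ and $R$ a subset of $\prod_{d\in I}F_d$,
\[
\rho\big(P_I^{-1}(R)\big)=(P_I)_*\rho(R)=\rho^I(R).
\]
The cylinders form a $\pi$-system (the intersection of a cylinder over $I$ with one over $J$ is a cylinder over $I\cup J$) that contains the whole space and generates the product $\sigma$-algebra, so by the $\pi$-$\lambda$ theorem \cite[Theorem~10.1 (iii)]{Kechris} any two finite inverse limits must coincide. The genuine mathematical content sits entirely inside the cited Kolmogorov theorem, and I expect no real obstacle in the reduction itself; the only points demanding care are the bookkeeping of passing between the finite-measure and probability-measure formulations and, correspondingly, the treatment of the degenerate case $c=0$. (One could alternatively bypass the citation and build $\rho$ directly by a Carathéodory extension of the premeasure $P_I^{-1}(R)\mapsto\rho^I(R)$ on the algebra of cylinders, where continuity at $\emptyset$ follows from the compactness of the product of finite spaces; but since the factors here are finite, invoking the Kolmogorov Extension Theorem is the cleaner route.)
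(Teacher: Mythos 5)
Your argument is correct, and it follows exactly the route the paper intends: the paper states this lemma without proof, citing the classical Kolmogorov Extension Theorem \cite[Theorem~2.4.3]{Tao}, which is precisely what you invoke after the (routine but worth recording) normalization from finite measures to probability measures and the $\pi$-$\lambda$ argument for uniqueness. Your write-up simply fills in the bookkeeping the paper leaves implicit.
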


\section{The category of \sqg s}
\label{sec:category}

The key notion of this paper is the following generalization of a graph.

\begin{definition}
Let $(X,\A)$ be a measurable space.
A \emph{\sqg} on $(X,\A)$ is a pair $(\pi,\mu)$ where $\pi$ is a probability measure on $(X,\A)$ and $\mu$ is a finite measure on $(X,\A)^2$.
\end{definition}

To define the categorical structure on the class of all \sqg s, we first introduce the notion of a morphism.

\begin{definition}
\label{def:morphism}
Let $(\pi_X,\mu_X)$ be a \sqg\ on a measurable space $(X,\A)$ and $(\pi_Y,\mu_Y)$ be a \sqg\ on a measurable space $(Y,\B)$.
A \emph{morphism} from $(\pi_X,\mu_X)$ to $(\pi_Y,\mu_Y)$ is a Markov kernel $\kappa$ from $(X,\A)$ to $(Y,\B)$ such that $\pi_Y=\kappa_*\pi_X$ and $\mu_Y=\kappa^{\otimes 2}_*\mu_X$.
\end{definition}

Because of the importance of the definition above, we explicitly write down the formula for the measure $\kappa^{\otimes 2}_*\mu_X$ (which is obtained by putting~\eqref{eq:mocninaKapy} and~\eqref{eq:pushforward} together):
\begin{equation*}
\kappa^{\otimes 2}_*\mu_X(\widetilde B)=\int_{X^2}(\kappa(x_1,\cdot)\times\kappa(x_2,\cdot))(\widetilde B)\,d\mu_X(x_1,x_2),\quad\widetilde B\in\B^2.
\end{equation*}
It is a classical result (easily obtainable from the $\pi$-$\lambda$ theorem) that, if two finite measures on $(Y,\B)^2$ coincide on the collection of all measurable rectangles, then they are the same.
So, to verify that $\mu_Y=\kappa^{\otimes 2}_*\mu_X$, it is enough to check that
\begin{equation*}
\mu_Y(B_1\times B_2)=\int_{X^2}\kappa(x_1,B_1)\kappa(x_2,B_2)\,d\mu_X(x_1,x_2),\quad B_1,B_2\in\B.
\end{equation*}

\begin{example}
\label{ex:deterministicMorphism}
Let $(\pi_X,\mu_X)$ be a \sqg\ on a measurable space $(X,\A)$ and $(\pi_Y,\mu_Y)$ be a \sqg\ on a measurable space $(Y,\B)$.
Let $f\colon X\to Y$ be a measurable map.
Then $\kappa_f$ is a morphism from $(\pi_X,\mu_X)$ to $(\pi_Y,\mu_Y)$ if and only if $\pi_Y=f_*\pi_X$ and $\mu_Y=f^{\otimes 2}_*\mu_X$.
\end{example}

Next, we introduce the composition operation.

\begin{definition}
\label{def:composition}
Suppose that $(\pi_X,\mu_X)$, $(\pi_Y,\mu_Y)$, $(\pi_Z,\mu_Z)$ are \sqg s on measurable spaces $(X,\A)$, $(Y,\B)$, $(Z,\C)$, respectively.
Let $\kappa$ be a morphism from $(\pi_X,\mu_X)$ to $(\pi_Y,\mu_Y)$ and $\kappa'$ be a morphism from $(\pi_Y,\mu_Y)$ to $(\pi_Z,\mu_Z)$.
Then we define the morphism $\kappa'\circ\kappa$ from $(\pi_X,\mu_X)$ to $(\pi_Z,\mu_Z)$ by~\eqref{eq:compositionOfMorphisms}.
\end{definition}

To verify the corectness of \Cref{def:composition}, it is necessary to check that the Markov kernel $\kappa'\circ\kappa$ from $(X,\A)$ to $(Z,\C)$ defined by~\eqref{eq:compositionOfMorphisms} satisfies $(\kappa'\circ\kappa)_*\pi_X=\pi_Z$ and $(\kappa'\circ\kappa)^{\otimes 2}_*\mu_X=\mu_Z$.
The former equation follows immediately from \Cref{cor:slozeni}.
The latter equation follows by an easy combination of \Cref{l:otimes} and \Cref{cor:slozeni}.

\medskip

It is straightforward to verify that morphisms from \Cref{def:morphism}, together with the composition operation from \Cref{def:composition}, define a categorical structure on the class of all \sqg s. Indeed, the associativity of the composition operation is granted for free by the associativity in the category of measurable spaces from \Cref{ex:meas} (it can also be easily verified using Fubini's theorem).
Now assume that $(\pi_X,\mu_X)$ is a \sqg\ on a measurable space $(X,\A)$, and let $i_X$ be the identity map on $X$.
Then the Markov kernel $\kappa_{i_X}$ from $(X,\A)$ to $(X,\A)$ is the identity morphism for $(\pi_X,\mu_X)$.

\medskip

We conclude this section by a few examples.

\begin{example}
Let $G=(V,E)$ be a finite graph with a non-empty vertex set $V$.
We may allow any combination of the following: loops, multiple edges, weighted vertices and/or edges, directed edges.
Then $G$ can be naturally understood as a \sqg\ $(\pi_G,\mu_G)$ on the measurable space $(V,2^V)$, where $2^V$ is the discrete $\sigma$-algebra on the vertex set $V$.
Indeed, let $\pi_G$ be the normalized counting measure on $(V,2^V)$ (with an obvious modification in the case of weighted vertices) and let $\mu_G$ be the measure on $(V,2^V)^2$ defined such that, for every $v_1,v_2\in V$, $\mu_G(\{(v_1,v_2)\})$ equals the number of edges from $v_1$ to $v_2$ (with an obvious modification in the case of weighted edges).

Sometimes, it is useful to additionally use some kind of normalization to the measure $\mu_G$.
\end{example}

\begin{example}
A \emph{graphon (in the general form)} is defined as a pair $(\mathcal X,W)$ where $\mathcal X=(X,\A,\pi_X)$ is a probability space and $W\colon X^2\to [0,1]$ is a symmetric function which is measurable with respect to the completion of the $\sigma$-algebra $\A^2$ (see \cite[p. 217]{Lovasz}).

Every graphon $(\mathcal X,W)$, where $\mathcal X=(X,\A,\pi_X)$, can be naturally understood as a \sqg\ $(\pi_X,\mu_W)$ on the measurable space $(X,\A)$.
Indeed, just define
\begin{equation*}
\mu_W(\widetilde A)=\int_{\widetilde A}W(x_1,x_2)\,d\pi_X^2(x_1,x_2),\quad\widetilde A\in\A^2.
\end{equation*}
\end{example}

\begin{example}
In ~\cite[Definition~4.6]{KLS}, s-graphons are defined as symmetric Borel probability measures on the square of the unit interval (which is equipped with the Borel $\sigma$-algebra and with the Lebesgue measure).
Then every s-graphon can be naturally viewed as a special case of a \sqg.
\end{example}

\begin{example}
\label{ex:newGraphons}
Let $(X,\A)$, $(Y,\B)$ be measurable spaces.
Suppose that $(\pi,\mu)$ is a \sqg\ on $(X,\A)$.
Let $\kappa$ be a Markov kernel from $(X,\A)$ to $(Y,\B)$.
Then $(\kappa_*\pi,\kappa^{\otimes 2}_*\mu)$ is a \sqg\ on $(Y,\B)$ and $\kappa$ is a morphism from $(\pi,\mu)$ to $(\kappa_*\pi,\kappa^{\otimes 2}_*\mu)$.
\end{example}

\begin{example}
\label{ex:trivialMorphisms}
Let $\Omega_j$ be a non-empty finite set and $2^{\Omega_j}$ be the discrete $\sigma$-algebra on $\Omega_j$, $j=1,2$.
Suppose that $f\colon\Omega_2\to\Omega_1$ is a surjective map.
Let $(\rho_1,\nu_1)$ be a \sqg\ on $(\Omega_1,2^{\Omega_1})$.
Let $\kappa$ be the Markov kernel from $(\Omega_1,2^{\Omega_1})$ to $(\Omega_2,2^{\Omega_2})$ given by
\[
\kappa(\omega_1,F)=\frac{|F\cap f^{-1}(\{\omega_1\})|}{|f^{-1}(\{\omega_1\})|},\quad\omega_1\in\Omega_1,F\subseteq\Omega_2.
\]
Let $(\rho_2,\nu_2)$ be the \sqg\ on $(\Omega_2,2^{\Omega_2})$ defined by
\[
\rho_2=\kappa_*\rho_1\quad\text{and}\quad\nu_2=\kappa^{\otimes2}_*\nu_1
\]
(so that $\kappa$ is a morphism from $(\rho_1,\nu_1)$ to $(\rho_2,\nu_2)$).
Then the Markov kernel $\kappa_f$ from $(\Omega_2,2^{\Omega_2})$ to $(\Omega_1,2^{\Omega_1})$ is a morphism from $(\rho_2,\nu_2)$ to $(\rho_1,\nu_1)$.
\end{example}

\begin{example}
\label{ex:nonatomic}
Let $(\pi_X,\mu_X)$ be a \sqg\ on a measurable space $(X,\A)$.
Suppose that all singleton subsets of $X$ are measurable and put
\[
A=\big\{x\in X:\pi_X(\{x\})>0\big\}.
\]
Let $(Y,\B)$ be the measurable space obtained from $(X,\A)$ by replacing each $x\in A$ by a copy $I_x$ of the interval $[0,\pi_X(\{x\})]$ with its Borel $\sigma$-algebra $\B_x$.
More precisely, we put
\[
Y=(X\setminus A)\cup\bigcup_{x\in A}I_x
\]
and
\[
\B=\big\{B\subseteq Y:B\setminus A\in\A\text{ and }B\cap I_x\in\B_x,x\in A\big\}.
\]
For every $x\in A$, let $\lambda_x$ be the Lebesgue measure on $(I_x,\B_x)$.
Let $\kappa$ be the Markov kernel from $(X,\A)$ to $(Y,\B)$ given by
\begin{multline*}
\kappa(x,B)=
\begin{cases}
1,&x\in X\setminus A\text{ and }x\in B,\\
0,&x\in X\setminus A\text{ and }x\notin B,\\
\frac{\lambda_x(B\cap I_x)}{\pi_X(\{x\})},&x\in A,
\end{cases}
\\
B\in\B.
\end{multline*}
Let $(\pi_Y,\mu_Y)$ be the \sqg\ on $(Y,\B)$ defined by
\[
\pi_Y=\kappa_*\pi_X\quad\text{and}\quad\mu_Y=\kappa^{\otimes2}_*\mu_X
\]
(so that $\kappa$ is a morphism from $(\pi_X,\mu_X)$ to $(\pi_Y,\mu_Y)$).
Let $f\colon Y\to X$ be the (measurable) map defined by
\[
f(y)=
\begin{cases}
y,&y\in X\setminus A,\\
x,&y\in I_x,x\in A.
\end{cases}
\]
Then the Markov kernel $\kappa_f$ from $(Y,\B)$ to $(X,\A)$ is a morphism from $(\pi_Y,\mu_Y)$ to $(\pi_X,\mu_X)$.

Obviously, if the measurable space $(X,\A)$ is standard Borel (see, e.g., \cite[Section 12.B]{Kechris}) then $(Y,\B)$ is stadard Borel as well.
\end{example}

\section{Convergence}

In this section, we introduce so called $k$-shapes and the notion of convergence for sequences of \sqg s.
Both these notions are heavily inspired by definitions of $k$-shapes and s-convergence from~\cite{KLS}.

\medskip

For every $k\in\N$, we define a measurable space $\F_k$ by
\[\F_k=\big([k],2^{[k]}\big),\]
where $2^{[k]}$ is the discrete $\sigma$-algebra on the set $[k]=\{1,\ldots,k\}$.
For every \sqg\ $(\rho,\nu)$ on $\F_k$, we can define $e_k(\rho,\nu)\in\R^{[k]\cup[k]^2}$ by
\[e_k(\rho,\nu)(i)=\rho(\{i\}),\quad i\in[k],\]
and
\[e_k(\rho,\nu)(i,j)=\nu(\{(i,j)\}),\quad(i,j)\in[k]^2.\]
Then $e_k$ is an embedding of the set of all \sqg s on $\F_k$ into $\R^{[k]\cup[k]^2}$.
We equip the space of all \sqg s on $\F_k$ with the topology which makes the embedding $e_k$ a homeomorphism of the space of all \sqg s on $\F_k$ onto its image (which is endowed with the topology inherited from $\R^{[k]\cup[k]^2}$).
In particular, a sequence $(\rho_n,\nu_n)_{n=1}^\infty$ of \sqg s on $\F_k$ is convergent to a \sqg\ $(\rho,\nu)$ on $\F_k$ if and only if
\[\lim_{n\to\infty}\rho_n(\{i\})=\rho(\{i\}),\quad i\in[k],\]
and
\[\lim_{n\to\infty}\nu_n(\{(i,j)\})=\nu(\{(i,j)\}),\quad(i,j)\in[k]^2.\]

\medskip

Let $\preceq$ be the preorder on the class of all \sqg s given by
\begin{equation*}
(\pi_Y,\mu_Y)\preceq(\pi_X,\mu_X)\quad\Leftrightarrow\quad\text{ there exists a morphism from }(\pi_X,\mu_X)\text{ to }(\pi_Y,\mu_Y).
\end{equation*}
For a given \sqg\ $(\pi,\mu)$, let $(\pi,\mu)^\downarrow$ be the $\preceq$-downward closure of $(\pi,\mu)$.
That is, $(\pi,\mu)^\downarrow$ is the class of all \sqg s $(\rho,\nu)\preceq(\pi,\mu)$.
For a \sqg\ $(\pi,\mu)$ and $k\in\N$, we also define
\[(\pi,\mu)^\downarrow_k=\big\{(\rho,\nu)\in(\pi,\mu)^\downarrow:(\rho,\nu)\text{ is a \sqg\ on $\F_k$}\big\}.\]

\begin{definition}
\label{def:k-shape}
Suppose that $(\pi,\mu)$ is a \sqg.
Then, for every $k\in\N$, we define the \emph{$k$-shape} $\S_k(\pi,\mu)$ of $(\pi,\mu)$ as the topological closure of $(\pi,\mu)^\downarrow_k$ in the space of all \sqg s on $\F_k$.
\end{definition}

\begin{lemma}
\label{lem:compactness}
For every \sqg\ $(\pi,\mu)$ and every $k\in\N$, the $k$-shape $\S_k(\pi,\mu)$ is a nonempty compact set.
\end{lemma}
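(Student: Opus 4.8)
The plan is to handle nonemptiness and compactness separately, transporting the problem through the embedding $e_k$ into the finite-dimensional space $\R^{[k]\cup[k]^2}$, where compactness reduces to the Heine--Borel criterion.

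For nonemptiness, I would produce an explicit point of $(\pi,\mu)^\downarrow_k$. Taking the constant measurable map $f\colon X\to[k]$ with value $1$, \Cref{ex:deterministicMorphism} shows that $\kappa_f$ is a morphism from $(\pi,\mu)$ to the \sqg\ $(f_*\pi,f^{\otimes2}_*\mu)$ on $\F_k$. Hence this \sqg\ belongs to $(\pi,\mu)^\downarrow_k$, which is therefore nonempty, and so is its closure $\S_k(\pi,\mu)$.

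For compactness, the first step is to observe that, via $e_k$, the space of all \sqg s on $\F_k$ is a closed subset of $\R^{[k]\cup[k]^2}$: the $[k]$-coordinates are confined to the probability simplex and the $[k]^2$-coordinates to the nonnegative orthant, both closed conditions. Since $\S_k(\pi,\mu)$ is the closure of $(\pi,\mu)^\downarrow_k$ inside this closed subspace, it coincides with the closure taken in $\R^{[k]\cup[k]^2}$ and is in particular closed there. It then remains to bound $(\pi,\mu)^\downarrow_k$. Given $(\rho,\nu)\in(\pi,\mu)^\downarrow_k$ witnessed by a morphism $\kappa$, each $\rho(\{i\})$ lies in $[0,1]$ because $\rho$ is a probability measure, while \Cref{lem:zachovaniMiry} applied to the Markov kernel $\kappa^{\otimes2}$ yields $\nu([k]^2)=\kappa^{\otimes2}_*\mu([k]^2)=\mu(X^2)$, so each $\nu(\{(i,j)\})$ lies in $[0,\mu(X^2)]$. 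Thus $e_k\big((\pi,\mu)^\downarrow_k\big)$ sits inside the box $[0,1]^{[k]}\times[0,\mu(X^2)]^{[k]^2}$, and $\S_k(\pi,\mu)$, being closed and bounded in a finite-dimensional space, is compact.

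There is no deep obstacle here: the only point demanding a little care is the uniform mass bound, namely that $\nu([k]^2)=\mu(X^2)$ holds simultaneously for every element of $(\pi,\mu)^\downarrow_k$, which is exactly the content of \Cref{lem:zachovaniMiry}. The remainder is routine finite-dimensional topology together with the identification of the space of \sqg s on $\F_k$ with a closed subset of $\R^{[k]\cup[k]^2}$, which is what lets the closure in the \sqg\ space agree with the closure in Euclidean space.
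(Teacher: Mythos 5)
Your proposal is correct and follows essentially the same route as the paper: nonemptiness via an explicit morphism into $\F_k$ (the paper uses an arbitrary Markov kernel together with \Cref{ex:newGraphons}, you use the constant map, which is a concrete instance of the same idea), and compactness by noting that the image of $e_k$ is closed in $\R^{[k]\cup[k]^2}$ and then bounding $(\pi,\mu)^\downarrow_k$ using $\nu([k]^2)=\mu(X^2)$ from \Cref{lem:zachovaniMiry}. No gaps.
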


\begin{proof}
We fix $k\in\N$ and a \sqg\ $(\pi,\mu)$ on a measurable space $(X,\A)$.
For every Markov kernel $\kappa$ from $(X,\A)$ to $\F_k$, the \sqg\ $(\kappa_*\pi,\kappa^{\otimes 2}_*\mu)$ belongs to $(\pi,\mu)^\downarrow_k$ (by \Cref{ex:newGraphons}).
In particular, the set $(\pi,\mu)^\downarrow_k$ is non-empty.
Consequently, the same is true for the $k$-shape $\S_k(\pi,\mu)$.

It remains to show that $\S_k(\pi,\mu)$ is a compact set.
That is, we must show that the closure of $e_k\big((\pi,\mu)^\downarrow_k\big)$ in the image of the embedding $e_k$ is a compact set.
We note that the image of $e_k$ is a closed subset of $\R^{[k]\cup[k]^2}$.
Indeed, it contains exactly those $\alpha\in\R^{[k]\cup[k]^2}$ which have all entries non-negative and which satisfy $\sum_{i\in [k]}\alpha(i)=1$.
So the closure of $e_k\big((\pi,\mu)^\downarrow_k\big)$ in the image of $e_k$ is the same as its closure in $\R^{[k]\cup[k]^2}$.
So, to prove the compactness, it is enough to verify that $e_k\big((\pi,\mu)^\downarrow_k\big)$ is a bounded subset of $\R^{[k]\cup[k]^2}$.

For every $(\rho,\nu)\in(\pi,\mu)^\downarrow_k$, there is a morphism from $(\pi,\mu)$ to $(\rho,\nu)$, and so $\nu([k]^2)=\mu(X^2)$ (by \Cref{lem:zachovaniMiry}).
This, together with the fact that $\rho$ is a probability measure on $\F_k$, gives us that
\begin{equation*}
\begin{split}
&\sum_{i\in[k]}e_k(\rho,\nu)(i)+\sum_{(i,j)\in[k]^2}e_k(\rho,\nu)(i,j)\\
=&\sum_{i\in[k]}\rho(\{i\})+\sum_{(i,j)\in[k]^2}\nu(\{(i,j)\})\\
=&1+\mu(X^2).
\end{split}
\end{equation*}
So the set $e_k\big((\pi,\mu)^\downarrow_k\big)$ is bounded, which completes the proof.
\end{proof}

For every $k\in\N$, let $\K_k$ be the space of all non-empty compact subsets of the space of all \sqg s on the measurable space $\F_k$.
We equip the space $\K_k$ with the Vietoris topology (see, e.g., \cite[Section 4.F]{Kechris}).
By \Cref{lem:compactness}, each $k$-shape is an element of $\K_k$.

\begin{definition}
\label{def:convergence}
We say that a sequence $(\pi_n,\mu_n)_{n=1}^\infty$ of \sqg s is \emph{convergent} if, for every $k\in\N$, the sequence $\big(\S_k(\pi_n,\mu_n)\big)_{n=1}^\infty$ of the corresponding $k$-shapes is convergent in $\K_k$.

If, moreover, $(\pi,\mu)$ is another \sqg\ then we say that $(\pi,\mu)$ is a \emph{limit} of the sequence $(\pi_n,\mu_n)_{n=1}^\infty$ if, for every $k\in\N$, the $k$-shape $\S_k(\pi,\mu)$ of $(\pi,\mu)$ is the limit of the sequence $\big(\S_k(\pi_n,\mu_n)\big)_{n=1}^\infty$ in $\K_k$.
\end{definition}

We conclude this section by an easy example.

\begin{example}
\label{ex:symmetry}
Suppose that $(\pi,\mu)$ is a \sqg\ on a measurable space $(X,\A)$.
Suppose also that the measure $\mu$ is not symmetric.
Then there is $(\rho,\nu)\in(\pi,\mu)^\downarrow_3\subseteq\S_3(\pi,\mu)$ such that $\nu$ is not symmetric.

In order to see it, fix $A_1,A_2\in\A$ such that $\mu(A_1\times A_2)\neq\mu(A_2\times A_1)$.
If the sets $A_1,A_2$ are not disjoint, then it is easy to see that either
\[
\mu((A_1\setminus A_2)\times A_2)\neq\mu(A_2\times(A_1\setminus A_2)),
\]
or
\[
\mu((A_1\cap A_2)\times(A_2\setminus A_1))\neq\mu((A_2\setminus A_1)\times(A_1\cap A_2)).
\]
So, without loss of generality, we may assume that $A_1\cap A_2=\emptyset$.

Let $f\colon X\to[3]$ be the (measurable) map given by
\begin{equation*}
f(x)=
\begin{cases}
1,&x\in A_1,\\
2,&x\in A_2,\\
3,&x\in X\setminus(A_1\cup A_2).
\end{cases}
\end{equation*}
We put
\[
\rho=f_*\pi\quad\text{and}\quad\nu=f^{\otimes2}_*\mu.
\]
Then $(\rho,\nu)\in(\pi,\mu)^\downarrow_3$ (as witnessed by the morphism $\kappa_f$) and
\begin{equation*}
\begin{split}
\nu(\{1\}\times\{2\})=&f^{\otimes2}_*\mu(\{1\}\times\{2\})\\
=&\mu(A_1\times A_2)\\
\neq&\mu(A_2\times A_1)\\
=&f^{\otimes2}_*\mu(\{2\}\times\{1\})\\
=&\nu(\{2\}\times\{1\}).
\end{split}
\end{equation*}
\end{example}

\subsection{Existence of limits}

This subsection is devoted to the proof of the following theorem.

\begin{theorem}
\label{thm:existence}
Every convergent sequence $(\pi_n,\mu_n)_{n=1}^\infty$ of \sqg s has a limit.
\end{theorem}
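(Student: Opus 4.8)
The plan is to read off, for each $k\in\N$, the prescribed limiting compact set $\S_k:=\lim_{n}\S_k(\pi_n,\mu_n)$ in $\K_k$, and to build a single \sqg\ $(\pi,\mu)$ with $\S_k(\pi,\mu)=\S_k$ for every $k$. Following the strategy announced in the introduction, $(\pi,\mu)$ will live on an infinite product $Y=\prod_{d\in D}F_d$ of finite spaces, with $\pi$ produced as the inverse limit of a vertex system $(\rho^I)_{I\in\I}$ on the spaces $\prod_{d\in I}F_d$ and $\mu$ as the inverse limit of an edge system $(\nu^I)_{I\in\I}$ on the spaces $(\prod_{d\in I}F_d)^2\cong\prod_{d\in I}F_d^2$; existence of both limits is granted by \Cref{lem:existenceInverseLimit} once the systems are shown to be simple. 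Throughout I use the shorthand $\kappa_*(\rho,\nu):=(\kappa_*\rho,\kappa^{\otimes2}_*\nu)$. I first record that the total edge masses $\mu_n(X_n^2)$ stay bounded: the only Markov kernel into $\F_1$ is trivial, so $\S_1(\pi_n,\mu_n)$ is the single point recording $\mu_n(X_n^2)$, and convergence of the $1$-shapes forces $\mu_n(X_n^2)\to c<\infty$. This uniform bound keeps the relevant spaces of \sqg s on each $\prod_{d\in I}F_d$ bounded, hence relatively compact.

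For the data I take $D$ to be a countable set indexing, for every $k$, a countable dense subset $\{t^{k,m}\}_m$ of $\S_k$, and for $d=(k,m)$ I set $F_d=[k]$. Since $t^{k,m}\in\lim_n\S_k(\pi_n,\mu_n)$, I may pick $t^{k,m}_n\in\S_k(\pi_n,\mu_n)$ with $t^{k,m}_n\to t^{k,m}$, and then a Markov kernel $\kappa^{(d)}_n$ from $(X_n,\A_n)$ to $\F_k$ whose pushforward $(\kappa^{(d)}_n)_*(\pi_n,\mu_n)$ is within $1/n$ of $t^{k,m}_n$ (possible as $t^{k,m}_n$ lies in the closure of $(\pi_n,\mu_n)^\downarrow_k$). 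For finite $I$ I form the product kernel $\kappa_{I,n}=\prod_{d\in I}\kappa^{(d)}_n$ (\Cref{lem:measurability}); these satisfy $\kappa_{I,n}=\kappa_{P_{I,J}}\circ\kappa_{J,n}$ for $I\subseteq J$. A diagonal argument over the countably many $I\in\I$, using the boundedness above, extracts a subsequence along which all finite pushforwards $(\rho^I_n,\nu^I_n):=\kappa_{I,n}{}_*(\pi_n,\mu_n)$ converge; I define $(\rho^I,\nu^I)$ as these limits. Passing to the limit in the relations furnished by \Cref{cor:slozeni} and \Cref{l:otimes} turns the compatibility of the $\kappa_{I,n}$ into $\rho^I=(P_{I,J})_*\rho^J$ and $\nu^I=(P_{I,J})^{\otimes2}_*\nu^J$, so both families are simple inverse systems. \Cref{lem:existenceInverseLimit} then yields $(\pi,\mu)$ on $Y$ with $(P_I)_*\pi=\rho^I$ and $(P_I)^{\otimes2}_*\mu=\nu^I$; in particular each $(\rho^I,\nu^I)$ is a projection of $(\pi,\mu)$ and hence lies in $(\pi,\mu)^\downarrow$.

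It then remains to verify the two inclusions $\S_k(\pi,\mu)\subseteq\S_k$ and $\S_k\subseteq\S_k(\pi,\mu)$. For the second, the single-coordinate projection $(\rho^{\{d\}},\nu^{\{d\}})=t^{k,m}$ is by construction an element of $(\pi,\mu)^\downarrow_k$, and as $m$ varies these are dense in $\S_k$, giving $\S_k\subseteq\overline{(\pi,\mu)^\downarrow_k}=\S_k(\pi,\mu)$. For the first inclusion the key point is that each $(\rho^I,\nu^I)$ is a limit, as $n\to\infty$, of the finite pushforwards $(\rho^I_n,\nu^I_n)$ of $(\pi_n,\mu_n)$: consequently, for any Markov kernel $\sigma$ from $\prod_{d\in I}F_d$ to $\F_k$, the image $\sigma_*(\rho^I_n,\nu^I_n)$ lies in $(\pi_n,\mu_n)^\downarrow_k\subseteq\S_k(\pi_n,\mu_n)$ and converges to $\sigma_*(\rho^I,\nu^I)$, so this limit lies in $\lim_n\S_k(\pi_n,\mu_n)=\S_k$. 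Hence $\S_k(\rho^I,\nu^I)\subseteq\S_k$ for every $I$. To pass from the finite projections to $(\pi,\mu)$ itself, I will show that every pushforward $\kappa_*(\pi,\mu)$ of a kernel $\kappa$ from $Y$ to $\F_k$ is approximated by pushforwards of kernels that factor through some $P_I$; each such factoring pushforward lies in $\S_k(\rho^I,\nu^I)\subseteq\S_k$, and since $\S_k$ is closed this gives $(\pi,\mu)^\downarrow_k\subseteq\S_k$, whence $\S_k(\pi,\mu)\subseteq\S_k$.

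I expect this last approximation to be the main obstacle, since it must control the pushforward of $\kappa$ under $\pi$ and under $\mu$ simultaneously, and $\mu$ is a priori unrelated to $\pi$. My plan is to fix a reference probability measure $\lambda$ on $Y$ dominating $\pi$ together with both marginals of $\mu$, and to replace $\kappa$ by the finite-coordinate kernels $\kappa_I$ whose coordinate functions are the conditional expectations of $\kappa(\cdot,\{i\})$ given the sub-$\sigma$-algebra $\Sigma_I$ generated by $P_I$ (these are genuine kernels, being nonnegative with coordinate sum $1$, and factoring through $P_I$). Martingale convergence along an increasing exhaustion $I\uparrow D$, for which $\Sigma_I$ increases to the product $\sigma$-algebra, gives $\lambda$-a.e.\ convergence, hence convergence $\pi$-a.e.\ and a.e.\ for each marginal of $\mu$; dominated convergence then yields $(\kappa_I)_*\pi\to\kappa_*\pi$ and, because the relevant edge integrand factorizes as $\kappa(y_1,\{i\})\kappa(y_2,\{j\})$ and converges $\mu$-a.e., also $(\kappa_I)^{\otimes2}_*\mu\to\kappa^{\otimes2}_*\mu$. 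Combining the two inclusions gives $\S_k(\pi,\mu)=\S_k$ for all $k$, so by \Cref{def:convergence} the \sqg\ $(\pi,\mu)$ is the desired limit.
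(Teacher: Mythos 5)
Your proposal is correct and, in its overall architecture, coincides with the paper's proof: countable dense subsets of the limit shapes $S_k$, approximating morphisms from $(\pi_n,\mu_n)$ into $\F_{k_d}$, product kernels plus a diagonal argument, two simple inverse systems resolved by \Cref{lem:existenceInverseLimit}, and then the two inclusions $S_k\subseteq\S_k(\pi,\mu)$ and $(\pi,\mu)^\downarrow_k\subseteq S_k$, the latter reduced (exactly as in the paper, via \Cref{lem:malaZmenaMorphismu} and the factorization of a cylinder-measurable kernel through $\kappa_{P_I}$) to approximating an arbitrary kernel $\kappa$ from $\prod_d F_d$ to $\F_k$ in $L^1(\gamma)$, $\gamma=\pi+P^1_*\mu+P^2_*\mu$, by kernels depending on finitely many coordinates. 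The one genuinely different ingredient is how you produce that approximation. The paper's \Cref{claim} equips $\prod_d[k_d]$ with the product topology and uses regularity of finite Borel measures, Lusin's theorem, and a finite subcover to get, for each $\eta>0$, cylinder-constant functions within $\eta$ of $\kappa(\cdot,\{m\})$ in $L^1(\gamma)$; it then has to massage the output so that the functions are $[0,1]$-valued and sum to one. You instead take conditional expectations $E_\lambda[\kappa(\cdot,\{m\})\mid\Sigma_{I_j}]$ with respect to a probability measure $\lambda$ dominating $\gamma$ and invoke L\'evy's upward martingale convergence along an exhaustion $I_j\uparrow D$, followed by bounded convergence for $\pi$ and for both marginals of $\mu$. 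This is valid and somewhat cleaner: it needs no topology on the product space and no Lusin's theorem, and positivity and the sum-to-one constraint are preserved automatically by conditional expectation; the price is the appeal to martingale convergence and the need to fix everywhere-defined versions on $\lambda$-null cylinders. (Your observation that convergence of the $1$-shapes already forces $\sup_n\mu_n(X_n^2)<\infty$ is a nice explicit justification of the compactness needed for the diagonal extraction, which the paper leaves implicit; the identification of $\big(\prod_dF_d\big)^2$ with $\prod_dF_d^2$ that the paper handles via the bijections $\theta^I$ is, as you suggest, pure bookkeeping.)
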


We start with some auxiliary lemmata.

\begin{lemma}
\label{lem:projekceJakoMorfismy}
Let $L_1\subseteq L_2$ be non-empty finite sets.
Let $(\pi,\mu)$ be a \sqg\ on a measurable space $(X,\A)$ and, for every $l\in L_2$, let $(\rho_l,\nu_l)$ be a \sqg\ on a measurable space $(Y_l,\B_l)$.
Suppose that $\kappa_l$ is a morphism from $(\pi,\mu)$ to $(\rho_l,\nu_l)$, $l\in L_2$.
We put
\[\rho^{L_i}=\Big(\prod_{l\in L_i}\kappa_l\Big)_*\pi\quad\text{and}\quad\nu^{L_i}=\Big(\prod_{l\in L_i}\kappa_l\Big)^{\otimes2}_*\mu,\quad i=1,2.\]
Let $P_{L_1L_2}$ be the canonical projection from $\prod_{l\in L_2}Y_l$ to $\prod_{l\in L_1}Y_l$.
Then the Markov kernel $\kappa_{P_{L_1L_2}}$ is a morphism from $(\rho^{L_2},\nu^{L_2})$ to $(\rho^{L_1},\nu^{L_1})$.
\end{lemma}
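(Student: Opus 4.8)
The plan is to verify directly the two defining conditions of \Cref{def:morphism}, namely that $\rho^{L_1}=(\kappa_{P_{L_1L_2}})_*\rho^{L_2}$ and $\nu^{L_1}=(\kappa_{P_{L_1L_2}})^{\otimes2}_*\nu^{L_2}$. The entire argument rests on a single composition identity in the category of measurable spaces, which I would isolate first:
\begin{equation*}
\kappa_{P_{L_1L_2}}\circ\prod_{l\in L_2}\kappa_l=\prod_{l\in L_1}\kappa_l.
\end{equation*}
Intuitively, this says that postcomposing the full product kernel with the projection recovers the smaller product kernel, because projecting a product measure onto a subset of its coordinates yields the product of the corresponding factors.

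To establish this identity, I would check that both sides agree as Markov kernels; since a finite (here probability) measure is determined by its values on measurable rectangles via the $\pi$-$\lambda$ theorem, it suffices to test against rectangles $R=\prod_{l\in L_1}B_l$ with $B_l\in\B_l$. For a fixed $x\in X$, the function $\kappa_{P_{L_1L_2}}(\cdot,R)$ is the indicator of $P_{L_1L_2}^{-1}(R)$, so by~\eqref{eq:compositionOfMorphisms} the left-hand side evaluated at $(x,R)$ equals $\big(\prod_{l\in L_2}\kappa_l\big)\big(x,P_{L_1L_2}^{-1}(R)\big)$. Writing $P_{L_1L_2}^{-1}(R)$ as the rectangle with factor $B_l$ for $l\in L_1$ and factor $Y_l$ for $l\in L_2\setminus L_1$, and using that the product measure factorizes, this equals $\prod_{l\in L_1}\kappa_l(x,B_l)\cdot\prod_{l\in L_2\setminus L_1}\kappa_l(x,Y_l)$. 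The crucial point is that each $\kappa_l(x,\cdot)$ is a probability measure, so every factor $\kappa_l(x,Y_l)$ with $l\in L_2\setminus L_1$ equals $1$, leaving exactly $\big(\prod_{l\in L_1}\kappa_l\big)(x,R)$, as desired.

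With the composition identity in hand, the two defining conditions follow formally. For the vertex distributions, I would apply \Cref{cor:slozeni} to compute
\begin{equation*}
(\kappa_{P_{L_1L_2}})_*\rho^{L_2}=(\kappa_{P_{L_1L_2}})_*\Big(\big(\textstyle\prod_{l\in L_2}\kappa_l\big)_*\pi\Big)=\Big(\kappa_{P_{L_1L_2}}\circ\textstyle\prod_{l\in L_2}\kappa_l\Big)_*\pi=\big(\textstyle\prod_{l\in L_1}\kappa_l\big)_*\pi=\rho^{L_1}.
\end{equation*}
For the edge distributions, I would first apply \Cref{l:otimes} to the composition identity, obtaining $(\kappa_{P_{L_1L_2}})^{\otimes2}\circ\big(\prod_{l\in L_2}\kappa_l\big)^{\otimes2}=\big(\prod_{l\in L_1}\kappa_l\big)^{\otimes2}$, and then run the analogous computation starting from $\nu^{L_2}=\big(\prod_{l\in L_2}\kappa_l\big)^{\otimes2}_*\mu$ together with a further application of \Cref{cor:slozeni} to conclude $\nu^{L_1}=(\kappa_{P_{L_1L_2}})^{\otimes2}_*\nu^{L_2}$.

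I expect the composition identity to be the only step requiring genuine care: the factor-of-one argument for the discarded coordinates carries all the content, and it is precisely the point at which the probabilistic normalization of each $\kappa_l$ is used. The passage from this identity to the two measure-preservation conditions is purely formal, relying only on the functoriality results \Cref{cor:slozeni} and \Cref{l:otimes} already established above.
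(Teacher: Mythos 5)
Your proof is correct and follows essentially the same route as the paper: your composition identity $\kappa_{P_{L_1L_2}}\circ\prod_{l\in L_2}\kappa_l=\prod_{l\in L_1}\kappa_l$, verified on rectangles via the normalization $\kappa_l(x,Y_l)=1$ for the discarded coordinates, is precisely the paper's key computation~\eqref{eq:projekce} restated in categorical form. The only difference is cosmetic: where the paper then verifies $\rho^{L_1}=(P_{L_1L_2})_*\rho^{L_2}$ and $\nu^{L_1}=(P_{L_1L_2})^{\otimes2}_*\nu^{L_2}$ by writing out the pushforward integrals by hand, you deduce them formally from \Cref{cor:slozeni} and \Cref{l:otimes}, which is a slightly cleaner way to finish.
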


\begin{proof}
We must show that
\begin{equation}
\label{eq:jedna}
\rho^{L_1}=(P_{L_1L_2})_*\rho^{L_2}
\end{equation}
and
\begin{equation}
\label{eq:dva}
\nu^{L_1}=(P_{L_1L_2})^{\otimes 2}_*\nu^{L_2}.
\end{equation}
For every $R\in\prod_{l\in L_1}\B_l$ and every every $x\in X$, it holds that
\begin{equation}
\label{eq:projekce}
\begin{split}
\Big(\prod_{l\in L_1}\kappa_l\Big)(x,R)\stackrel{\eqref{eq:soucinKap}}{=}&\Big(\prod_{l\in L_1}\kappa_l(x,\cdot)\Big)(R)\\
=&\Big(\prod_{l\in L_2}\kappa_l(x,\cdot)\Big)\Big(P_{L_1L_2}^{-1}(R)\Big)\\
\stackrel{\eqref{eq:soucinKap}}{=}&\Big(\prod_{l\in L_2}\kappa_l\Big)\Big(x,P_{L_1L_2}^{-1}(R)\Big).
\end{split}
\end{equation}
So, for every $R\in\prod_{l\in L_1}\B_l$, we have
\begin{equation*}
\begin{split}
\rho^{L_1}(R)=&\Big(\prod_{l\in L_1}\kappa_l\Big)_*\pi(R)\\
=&\int_X\Big(\prod_{l\in L_1}\kappa_l\Big)(x,R)\,d\pi(x)\\
\stackrel{\eqref{eq:projekce}}{=}&\int_X\Big(\prod_{l\in L_2}\kappa_l\Big)(x,P_{L_1L_2}^{-1}(R))\,d\pi(x)\\
=&\Big(\prod_{l\in L_2}\kappa_l\Big)_*\pi\big(P_{L_1L_2}^{-1}(R)\big)\\
=&\rho^{L_2}\big(P_{L_1L_2}^{-1}(R)\big),
\end{split}
\end{equation*}
which verifies~\eqref{eq:jedna}.

Similarly, for every $R_1,R_2\in\prod_{l\in L_1}\B_l$ and every $(x_1,x_2)\in X^2$, it holds that
\begin{equation}
\label{eq:projekceB}
\begin{split}
&\Big(\prod_{l\in L_1}\kappa_l\Big)^{\otimes 2}\big((x_1,x_2),R_1\times R_2\big)\\
\stackrel{\eqref{eq:mocninaKapy}}{=}&\Big(\prod_{l\in L_1}\kappa_l\Big)(x_1,R_1)\Big(\prod_{l\in L_1}\kappa_l\Big)(x_2,R_2)\\
\stackrel{\eqref{eq:projekce}}{=}&\Big(\prod_{l\in L_2}\kappa_l\Big)\big(x_1,P_{L_1L_2}^{-1}(R_1)\big)\Big(\prod_{l\in L_2}\kappa_l\Big)\big(x_2,P_{L_1L_2}^{-1}(R_2)\big)\\
\stackrel{\eqref{eq:mocninaKapy}}{=}&\Big(\prod_{l\in L_2}\kappa_l\Big)^{\otimes 2}\Big((x_1,x_2),\big((P_{L_1L_2}^{\otimes2})^{-1}(R_1\times R_2)\big)\Big).
\end{split}
\end{equation}
So, for every $R_1,R_2\in\prod_{l\in L_1}\B_l$, we have
\begin{equation*}
\begin{split}
\nu^{L_1}(R_1\times R_2)=&\Big(\prod_{l\in L_1}\kappa_l\Big)^{\otimes 2}_*\mu(R_1\times R_2)\\
=&\int_{X^2}\Big(\prod_{l\in L_1}\kappa_l\Big)^{\otimes 2}\big((x_1,x_2),R_1\times R_2\big)\,d\mu(x_1,x_2)\\
\stackrel{\eqref{eq:projekceB}}{=}&\int_{X^2}\Big(\prod_{l\in L_2}\kappa_l\Big)^{\otimes 2}\Big((x_1,x_2),\big((P_{L_1L_2}^{\otimes2})^{-1}(R_1\times R_2)\big)\Big)\,d\mu(x_1,x_2)\\
=&\Big(\prod_{l\in L_2}\kappa_l\Big)^{\otimes 2}_*\mu\Big((P_{L_1L_2}^{\otimes2})^{-1}\big(R_1\times R_2\big)\Big)\\
=&\nu^{L_2}\Big((P_{L_1L_2}^{\otimes2})^{-1}\big(R_1\times R_2\big)\Big),
\end{split}
\end{equation*}
which verifies~\eqref{eq:dva}.
\end{proof}

\begin{lemma}
\label{lem:malaZmenaMorphismu}
Let $(\pi,\mu)$ be a \sqg\ on a measurable space $(X,\A)$ and let $k\in\N$ be fixed.
Suppose that $(\rho_i,\nu_i)$ is a \sqg\ on $\F_k$ and that $\kappa_i$ is a morphism from $(\pi,\mu)$ to $(\rho_i,\nu_i)$, $i=1,2$.
Let $P^j\colon X^2\to X$ be the projection on the $j$th coordinate, $j=1,2$.
Let $\gamma$ be the measure on $(X,\A)$ given by
\begin{equation}
\label{triMiry}
\gamma=\pi+P^1_*\mu+P^2_*\mu.
\end{equation}
Let $\eta>0$ be such that
\begin{equation}
\label{smallError}
\big\|\kappa_1(\cdot,\{m\})-\kappa_2(\cdot,\{m\})\big\|_{L^1(\gamma)}<\eta,\quad m\in[k].
\end{equation}
Then it holds that
\begin{equation*}
|\rho_1(\{m\})-\rho_2(\{m\})|<\eta,\quad m\in[k],
\end{equation*}
and
\begin{equation*}
|\nu_1(\{(m_1,m_2)\})-\nu_2(\{(m_1,m_2)\})|<2\eta,\quad (m_1,m_2)\in[k]^2.
\end{equation*}
\end{lemma}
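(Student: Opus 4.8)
The plan is to unwind both quantities through the defining equations of a morphism (see \Cref{def:morphism}), namely $\rho_i=(\kappa_i)_*\pi$ and $\nu_i=(\kappa_i)^{\otimes 2}_*\mu$, rewrite them as integrals of the functions $x\mapsto\kappa_i(x,\{m\})$, and then exploit the single crucial feature of $\gamma$: since $P^1_*\mu$ and $P^2_*\mu$ are nonnegative measures, the defining formula~\eqref{triMiry} yields $\pi\le\gamma$, $P^1_*\mu\le\gamma$ and $P^2_*\mu\le\gamma$ as set functions. Consequently, for any nonnegative measurable $g$ on $(X,\A)$, each of $\int_X g\,d\pi$, $\int_X g\,d(P^1_*\mu)$ and $\int_X g\,d(P^2_*\mu)$ is bounded above by $\int_X g\,d\gamma=\|g\|_{L^1(\gamma)}$. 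Applying this to $g=|\kappa_1(\cdot,\{m\})-\kappa_2(\cdot,\{m\})|$ and invoking the hypothesis~\eqref{smallError} will produce both desired inequalities.

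For the vertex estimate I would write, using~\eqref{eq:pushforward}, that $\rho_i(\{m\})=\int_X\kappa_i(x,\{m\})\,d\pi(x)$, so that by the triangle inequality for integrals
\[
|\rho_1(\{m\})-\rho_2(\{m\})|\le\int_X|\kappa_1(x,\{m\})-\kappa_2(x,\{m\})|\,d\pi(x).
\]
Since the integrand is nonnegative and $\pi\le\gamma$, the right-hand side is at most $\|\kappa_1(\cdot,\{m\})-\kappa_2(\cdot,\{m\})\|_{L^1(\gamma)}<\eta$, which is exactly the first claim.

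For the edge estimate I would start from $\nu_i(\{(m_1,m_2)\})=\int_{X^2}\kappa_i(x_1,\{m_1\})\kappa_i(x_2,\{m_2\})\,d\mu(x_1,x_2)$, the rectangle formula for $(\kappa_i)^{\otimes2}_*\mu$ obtained by combining~\eqref{eq:mocninaKapy} and~\eqref{eq:pushforward}. The key algebraic step is the telescoping identity $ab-cd=a(b-d)+(a-c)d$ applied with $a=\kappa_1(x_1,\{m_1\})$, $b=\kappa_1(x_2,\{m_2\})$, $c=\kappa_2(x_1,\{m_1\})$, $d=\kappa_2(x_2,\{m_2\})$. Because all four numbers lie in $[0,1]$ (they are values of probability measures), the integrand of the difference is pointwise bounded by $|\kappa_1(x_1,\{m_1\})-\kappa_2(x_1,\{m_1\})|+|\kappa_1(x_2,\{m_2\})-\kappa_2(x_2,\{m_2\})|$. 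Integrating the two summands separately against $\mu$ and pushing each down to $X$ along the projections $P^1,P^2$ — that is, via the change-of-variables formula of \Cref{lem:obrazMiry1} applied to the deterministic kernels $\kappa_{P^1},\kappa_{P^2}$, equivalently the very definition of $P^1_*\mu$ and $P^2_*\mu$ — turns them into $\int_X|\kappa_1(\cdot,\{m_1\})-\kappa_2(\cdot,\{m_1\})|\,d(P^1_*\mu)$ and $\int_X|\kappa_1(\cdot,\{m_2\})-\kappa_2(\cdot,\{m_2\})|\,d(P^2_*\mu)$. Each of these is bounded by the corresponding $L^1(\gamma)$ norm, hence by $\eta$, and their sum gives $2\eta$.

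The routine parts are the triangle inequalities and the reduction of the $X^2$-integrals to $X$-integrals, which are immediate once the pushforward language is in place. The only genuinely delicate point is the bookkeeping: one must notice that $\gamma$ was designed precisely so that it simultaneously dominates the three measures $\pi$, $P^1_*\mu$, $P^2_*\mu$ against which the three integrals are taken, and that the factor $2$ in the edge bound is an honest consequence of the product-difference split, each factor of the product contributing one copy of $\eta$. I expect no real obstacle beyond verifying that the values $\kappa_i(x,\{m\})$ genuinely lie in $[0,1]$, which holds since each $\kappa_i(x,\cdot)$ is a probability measure.
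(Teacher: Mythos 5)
Your proof is correct and follows essentially the same route as the paper's: the vertex bound via domination of $\pi$ by $\gamma$, and the edge bound via the telescoping split $ab-cd=a(b-d)+(a-c)d$ with the factors bounded by $1$, pushed down to $X$ along the projections and absorbed into $L^1(\gamma)$. No gaps.
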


\begin{proof}
For every $m\in[k]$, it holds that
\begin{equation*}
\begin{split}
|\rho_1(\{m\})-\rho_2(\{m\})|=&|(\kappa_1)_*\pi(\{m\})-(\kappa_2)_*\pi(\{m\})|\\
=&\Big|\int_X\kappa_1(x,\{m\})\,d\pi(x)-\int_X\kappa_2(x,\{m\})\,d\pi(x)\Big|\\
\le&\big\|\kappa_1(\cdot,\{m\})-\kappa_2(\cdot,\{m\})\big\|_{L^1(\pi)}\\
\stackrel{\eqref{triMiry},\eqref{smallError}}{<}&\eta.
\end{split}
\end{equation*}
Similarly, for every $(m_1,m_2)\in[k]^2$, it holds that
\begin{equation*}
\begin{split}
&|\nu_1(\{(m_1,m_2)\})-\nu_2(\{(m_1,m_2)\})|\\
=&|(\kappa_1)^{\otimes 2}_*\mu(\{(m_1,m_2)\})-(\kappa_2)^{\otimes 2}_*\mu(\{(m_1,m_2)\})|\\
=&\Big|\int_{X^2}\kappa_1(x_1,\{m_1\})\kappa_1(x_2,\{m_2\})\,d\mu(x_1,x_2)\\
&-\int_{X^2}\kappa_2(x_1,\{m_1\})\kappa_2(x_2,\{m_2\})\,d\mu(x_1,x_2)\Big|\\
\le&\int_{X^2}|\kappa_1(x_1,\{m_1\})\kappa_1(x_2,\{m_2\})-\kappa_2(x_1,\{m_1\})\kappa_2(x_2,\{m_2\})|\,d\mu(x_1,x_2)\\
\le&\int_{X^2}\kappa_1(x_1,\{m_1\})\cdot|\kappa_1(x_2,\{m_2\})-\kappa_2(x_2,\{m_2\})|\,d\mu(x_1,x_2)\\
&+\int_{X^2}\kappa_2(x_2,\{m_2\})\cdot|\kappa_1(x_1,\{m_1\})-\kappa_2(x_1,\{m_1\})|\,d\mu(x_1,x_2)\\
\le&\big\|\kappa_1(\cdot,\{m_2\})-\kappa_2(\cdot,\{m_2\})\big\|_{L^1(P^2_*\mu)}+\big\|\kappa_1(\cdot,\{m_1\})-\kappa_2(\cdot,\{m_1\})\big\|_{L^1(P^1_*\mu)}\\
\stackrel{\eqref{triMiry},\eqref{smallError}}{<}&2\eta.
\end{split}
\end{equation*}

\end{proof}

\begin{lemma}
\label{lem:malaZmenaGraphonu}
Let $(\rho,\nu)$ and $(\rho',\nu')$ be \sqg s, both on the same measurable space $(\Omega,2^\Omega)$, where the set $\Omega$ is non-empty and finite and $2^\Omega$ is the discrete $\sigma$-algebra on $\Omega$.
Let $\kappa$ be a Markov kernel from $(\Omega,2^\Omega)$ to a measurable space $(X,\A)$.
We put
\[
m=\max_{\omega\in\Omega}|\rho(\{\omega\})-\rho'(\{\omega\})|
\]
and
\[
M=\max_{(\omega_1,\omega_2)\in\Omega^2}|\nu(\{(\omega_1,\omega_2)\})-\nu'(\{(\omega_1,\omega_2)\})|.
\]
Then it holds that
\[
|\kappa_*\rho(A)-\kappa_*\rho'(A)|\le m|\Omega|,\quad A\in\A,
\]
and
\[
|\kappa^{\otimes 2}_*\nu(\widetilde A)-\kappa^{\otimes 2}_*\nu'(\widetilde A)|\le M|\Omega|^2,\quad\widetilde A\in\A^2.
\]
\end{lemma}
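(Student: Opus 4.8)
The plan is to exploit the finiteness of $\Omega$, which turns both pushforward integrals into finite sums, and then to bound each sum termwise using that every quantity of the form $\kappa(\omega,\cdot)$ (or a product of two such) is a probability measure and hence takes values in $[0,1]$. In particular, the \emph{same} kernel coefficients appear in the expansions of the two pushforwards, so the differences $\rho-\rho'$ and $\nu-\nu'$ factor out cleanly.

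First I would handle the vertex part. Using the pushforward formula~\eqref{eq:pushforward} and the fact that integration against a measure on the finite discrete space $(\Omega,2^\Omega)$ is just summation, I write
\begin{equation*}
\kappa_*\rho(A)-\kappa_*\rho'(A)=\sum_{\omega\in\Omega}\kappa(\omega,A)\big(\rho(\{\omega\})-\rho'(\{\omega\})\big).
\end{equation*}
Applying the triangle inequality, using $0\le\kappa(\omega,A)\le 1$ (since $\kappa(\omega,\cdot)$ is a probability measure), and bounding each $|\rho(\{\omega\})-\rho'(\{\omega\})|$ by $m$, the right-hand side is at most $\sum_{\omega\in\Omega}m=m|\Omega|$, which is the first claimed bound.

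Next I would treat the edge part in exactly the same way. By the definition of $\kappa^{\otimes2}_*\nu$ (combining~\eqref{eq:mocninaKapy} and~\eqref{eq:pushforward}) and again replacing integration over the finite space $\Omega^2$ by summation,
\begin{equation*}
\kappa^{\otimes2}_*\nu(\widetilde A)-\kappa^{\otimes2}_*\nu'(\widetilde A)=\sum_{(\omega_1,\omega_2)\in\Omega^2}\big(\kappa(\omega_1,\cdot)\times\kappa(\omega_2,\cdot)\big)(\widetilde A)\big(\nu(\{(\omega_1,\omega_2)\})-\nu'(\{(\omega_1,\omega_2)\})\big).
\end{equation*}
Since $\kappa(\omega_1,\cdot)\times\kappa(\omega_2,\cdot)$ is a product of probability measures, it is itself a probability measure, so the coefficient $\big(\kappa(\omega_1,\cdot)\times\kappa(\omega_2,\cdot)\big)(\widetilde A)$ lies in $[0,1]$. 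The triangle inequality together with the bound $|\nu(\{(\omega_1,\omega_2)\})-\nu'(\{(\omega_1,\omega_2)\})|\le M$ then estimates a sum of $|\Omega|^2$ terms, each at most $M$, giving $M|\Omega|^2$.

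There is no real obstacle here; the statement is essentially a Lipschitz-type estimate for the pushforward operation restricted to a fixed finite source space. The only two points that require a little care are the observation that the two pushforwards are summed against identical kernel coefficients, and the uniform bound $\kappa(\omega,A)\in[0,1]$, which is exactly the defining property of a Markov kernel that makes the termwise estimate work.
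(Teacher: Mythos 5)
Your proposal is correct and follows essentially the same route as the paper's proof: both expand the pushforwards as finite sums over $\Omega$ (resp.\ $\Omega^2$), factor out the differences $\rho(\{\omega\})-\rho'(\{\omega\})$ and $\nu(\{(\omega_1,\omega_2)\})-\nu'(\{(\omega_1,\omega_2)\})$, bound the kernel coefficients by $1$, and sum the termwise bounds. No gaps.
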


\begin{proof}
For every $A\in\A$, we have
\begin{equation*}
\begin{split}
|\kappa_*\rho(A)-\kappa_*\rho'(A)|=&\Big|\int_\Omega\kappa(\omega,A)\,d\rho(\omega)-\int_\Omega\kappa(\omega,A)\,d\rho'(\omega)\Big|\\
\le&\sum_{\omega\in\Omega}|\rho(\{\omega\})-\rho'(\{\omega\})|\kappa(\omega,A)\\
\le&m|\Omega|.
\end{split}
\end{equation*}
Similarly, for every $\widetilde A\in\A^2$, we have
\begin{equation*}
\begin{split}
&|\kappa^{\otimes 2}_*\nu(\widetilde A)-\kappa^{\otimes 2}_*\nu'(\widetilde A)|\\
=&\Big|\int_{\Omega^2}\kappa^{\otimes 2}((\omega_1,\omega_2),\widetilde A)\,d\nu(\omega_1,\omega_2)-\int_{\Omega^2}\kappa^{\otimes 2}((\omega_1,\omega_2),\widetilde A)\,d\nu'(\omega_1,\omega_2)\Big|\\
\le&\sum_{(\omega_1,\omega_2)\in\Omega^2}|\nu(\{(\omega_1,\omega_2)\})-\nu'(\{(\omega_1,\omega_2)\})|\kappa^{\otimes 2}((\omega_1,\omega_2),\widetilde A)\\
\le&M|\Omega|^2.
\end{split}
\end{equation*}
\end{proof}

Now we are ready for the proof of \Cref{thm:existence}.

\begin{proof}[Proof of \Cref{thm:existence}]
We fix a convergent sequence $(\pi_n,\mu_n)_{n=1}^\infty$ of \sqg s.
By \Cref{def:convergence}, for every $k\in\N$, there is a compact set $S_k$ which is the limit of the sequence $\big(\S_k(\pi_n,\mu_n)\big)_{n=1}^\infty$ in the Vietoris topology of the space $\K_k$.
We fix a countable dense subset $D_k$ of $S_k$, $k\in\N$, and we put $D=\bigcup_{k\in\N}D_k$.

The main idea of the proof is as follows.
Let $\I$ be the set of all non-empty finite subsets of $D$.
We will construct a collection $(\rho^I,\nu^I)_{I\in\I}$ of \sqg s which will give rise to two simple inverse systems of measures (which we introduced in \Cref{sec:inverse}):
\begin{itemize}
\item the system given by the probability measures $\rho^I$, $I\in\I$,
\item the system given by the finite measures $\nu^I$, $I\in\I$.
\end{itemize}
We will then apply \Cref{lem:existenceInverseLimit} to deduce that both these simple inverse systems have inverse limits.
The two inverse limits will give rise to a \sqg\ $(\pi,\mu)$.
It will follow from the construction that, for every $k\in\N$, the $k$-shape of $(\pi,\mu)$ contains the set $D_k$.
We will also show that the $k$-shapes of $(\pi,\mu)$ are not much bigger than that, namely, that $\S_k(\pi,\mu)=\overline{D_k}=S_k$, $k\in\N$.
This will prove that the \sqg\ $(\pi,\mu)$ is the desired limit of the sequence $(\pi_n,\mu_n)_{n=1}^\infty$.

\medskip

For every $d=(\rho,\nu)\in D$, let $k_d\in\N$ be such that $d\in D_{k_d}\subseteq S_{k_d}$.
Since $S_{k_d}$ is the limit of the sequence $(\S_{k_d}(\pi_n,\mu_n))_{n=1}^\infty$ in the Vietoris topology of the space $\K_{k_d}$, there is a sequence $(\rho_n,\nu_n)_{n=1}^\infty$ of \sqg s with $(\rho_n,\nu_n)\in\S_{k_d}(\pi_n,\mu_n)$, $n\in\N$, which converges to $d=(\rho,\nu)$ in the space of all \sqg s on $\F_{k_d}$.
As $(\pi_n,\mu_n)^\downarrow_{k_d}$ is a dense subset of $\S_{k_d}(\pi_n,\mu_n)$, $n\in\N$, we may assume that $(\rho_n,\nu_n)\in(\pi_n,\mu_n)^\downarrow_{k_d}$, $n\in\N$.
For every \sqg\ $d=(\rho,\nu)\in D$, we fix such a sequence $(\rho_n,\nu_n)_{n=1}^\infty$ for once and for all.
Further, for every $d=(\rho,\nu)\in D$ and every $n\in\N$, we also fix a morphism $\kappa_n^d$ from $(\pi_n,\mu_n)$ to $(\rho_n,\nu_n)$ for once and for all.

In the following, we denote by $(X_n,\A_n)$ the underlying measurable space of the \sqg\ $(\pi_n,\mu_n)$, $n\in\N$.
We also denote by $\F^I$ the measurable space $\prod_{d\in I}\F_{k_d}$, $I\in\I$.

Now we will construct the collection $(\rho^I,\nu^I)_{I\in\I}$ of \sqg s.
We fix some $I\in\I$ and $n\in\N$.
By \Cref{lem:measurability}, we know that $\prod_{d\in I}\kappa_n^d$ is a Markov kernel from $(X_n,\A_n)$ to $\F^I$.
We define a \sqg\ $(\rho_n^I,\nu_n^I)$ on $\F^I$ by
\[\rho_n^I=\Big(\prod_{d\in I}\kappa_n^d\Big)_*\pi_n\quad\text{and}\quad\nu_n^I=\Big(\prod_{d\in I}\kappa_n^d\Big)^{\otimes 2}_*\mu_n,\]
so that the Markov kernel $\prod_{d\in I}\kappa_n^d$ is a morphism from $(\pi_n,\mu_n)$ to $(\rho_n^I,\nu_n^I)$.
By compactness and the diagonal argument, there is an increasing sequence $(n_p)_{p=1}^\infty$ of natural numbers such that each of the countably many sequences
\[\left(\rho_{n_p}^I(R)\right)_{p=1}^\infty,\quad R\subseteq\prod_{d\in I}[k_d],\quad I\in\I,\]
and
\[\left(\nu_{n_p}^I(\widetilde R)\right)_{p=1}^\infty,\quad\widetilde R\subseteq\Big(\prod_{d\in I}[k_d]\Big)^2,\quad I\in\I,\]
is convergent.
Let us note that it is enough to prove that the subsequence $(\pi_{n_p},\mu_{n_p})_{p=1}^\infty$ has a limit as then the original sequence $(\pi_n,\mu_n)_{n=1}^\infty$, being convergent, has 
necessarily the same limit.
So, without loss of generality, we may assume that $n_p=p$, $p\in\N$, that is, that there is no need to pass to a subsequence.
Then, for every $I\in\I$, we can define the \sqg\ $(\rho^I,\nu^I)$ on $\F^I$ by
\begin{equation}
\label{eq:limitniMiry1}
\rho^I(R)=\lim_{n\to\infty}\rho_n^I(R),\quad R\subseteq\prod_{d\in I}[k_d],
\end{equation}
and
\begin{equation}
\label{eq:limitniMiry2}
\nu^I(\widetilde R)=\lim_{n\to\infty}\nu_n^I(\widetilde R),\quad\widetilde R\subseteq\Big(\prod_{d\in I}[k_d]\Big)^2.
\end{equation}

\medskip

Later, we will show that $(\rho^I)_{I\in\I}$ is a simple inverse system of measures.
Ideally, we would like to know that $(\nu^I)_{I\in\I}$ is also a simple inverse system of measures.
Then the inverse limits of these systems would represent the distributions of vertices and edges, respectively, of the desired limit \sqg.
However, this is not completely correct as one has to distinguish between product spaces of the form $\big(\prod_d[k_d]\big)^2$ and $\prod_d[k_d]^2$.
Indeed, the underlying set of each $\nu^I$ is $\big(\prod_{d\in I}[k_d]\big)^2$.
But to obtain a simple inverse system indexed by $\I$, we need measures whose underlying sets are $\prod_{d\in I}[k_d]^2$, $I\in\I$.
To deal with this small technical issue, let
\[
\theta\colon\Big(\prod_{d\in D}[k_d]\Big)^2\to\prod_{d\in D}[k_d]^2
\]
be the obvious bijection.
Similarly, for every $I\in\I$, let
\[\theta^I\colon\Big(\prod_{d\in\I}[k_d]\Big)^2\to\prod_{d\in\I}[k_d]^2\]
be the obvious bijection.
Further, for every $I\in\I$, let $P_I$ be the canonical projection from $\prod_{d\in D}[k_d]$ to $\prod_{d\in I}[k_d]$, and let $\widetilde P_I$ be the canonical projection from $\prod_{d\in D}[k_d]^2$ to $\prod_{d\in I}[k_d]^2$.
Similarly, for every $I,J\in\I$ with $I\subseteq J$, let $P_{I,J}$ be the canonical projection from $\prod_{d\in J}[k_d]$ to $\prod_{d\in I}[k_d]$, and let $\widetilde P_{I,J}$ be the canonical projection from $\prod_{d\in J}[k_d]^2$ to $\prod_{d\in I}[k_d]^2$.
We note that
\begin{equation}
\label{eq:jednadvesouradnice2}
\theta^I\circ P_I^{\otimes2}=\widetilde P_I\circ\theta,\quad I\in\I,
\end{equation}
and
\begin{equation}
\label{eq:jednadvesouradnice}
\theta^I\circ P_{I,J}^{\otimes2}=\widetilde P_{I,J}\circ\theta^J,\quad I,J\in\I, I\subseteq J.
\end{equation}

\medskip

Now we will show that $(\rho^I)_{I\in\I}$ and $(\theta^I_*\nu^I)_{I\in\I}$ are simple inverse systems of measures.
We need to verify that
\begin{equation}
\label{eq:jednajedna}
\rho^I=(P_{I,J})_*\rho^J,\quad I,J\in\I, I\subseteq J,
\end{equation}
and
\begin{equation}
\label{eq:dvadva}
\theta^I_*\nu^I=(\widetilde P_{I,J})_*(\theta^J_*\nu^J),\quad I,J\in\I, I\subseteq J.
\end{equation}
So we fix $I,J\in\I$ with $I\subseteq J$.
For every $n\in\N$, by \Cref{lem:projekceJakoMorfismy}, the Markov kernel $\kappa_{P_{I,J}}$ is a morphism from $(\rho_n^J,\nu_n^J)$ to $(\rho_n^I,\nu_n^I)$.
In particular, this means that
\begin{equation}
\label{eq:cons}
\rho^I_n=(P_{I,J})_*\rho^J_n\quad\text{and}\quad\nu^I_n=(P_{I,J})^{\otimes 2}_*\nu^J_n,\quad n\in\N.
\end{equation}
Consequently, for every $R\subseteq\prod_{d\in I}[k_d]$, it holds that
\begin{equation*}
\begin{split}
\rho^I(R)\stackrel{\eqref{eq:limitniMiry1}}{=}&\lim_{n\to\infty}\rho^I_n(R)\\
\stackrel{\eqref{eq:cons}}{=}&\lim_{n\to\infty}(P_{I,J})_*\rho^J_n(R)\\
=&\lim_{n\to\infty}\rho^J_n\big(P_{I,J}^{-1}(R)\big)\\
\stackrel{\eqref{eq:limitniMiry1}}{=}&\rho^J\big(P_{I,J}^{-1}(R)\big),
\end{split}
\end{equation*}
which verifies~\eqref{eq:jednajedna}.
Similarly, for every $\widetilde R\subseteq\prod_{d\in I}[k_d]^2$, it holds that
\begin{equation*}
\begin{split}
\theta^I_*\nu^I(\widetilde R)=&\nu^I\big((\theta^I)^{-1}(\widetilde R)\big)\\
\stackrel{\eqref{eq:limitniMiry2}}{=}&\lim_{n\to\infty}\nu^I_n\big((\theta^I)^{-1}(\widetilde R)\big)\\
\stackrel{\eqref{eq:cons}}{=}&\lim_{n\to\infty}(P_{I,J})^{\otimes2}_*\nu^J_n\big((\theta^I)^{-1}(\widetilde R)\big)\\
=&\lim_{n\to\infty}\nu^J_n\big((\theta^I\circ P_{I,J}^{\otimes2})^{-1}(\widetilde R)\big)\\
\stackrel{\eqref{eq:limitniMiry2}}{=}&\nu^J\big((\theta^I\circ P_{I,J}^{\otimes2})^{-1}(\widetilde R)\big)\\
\stackrel{\eqref{eq:jednadvesouradnice}}{=}&\nu^J\big((\widetilde P_{I,J}\circ\theta^J)^{-1}(\widetilde R)\big),
\end{split}
\end{equation*}
which verifies~\eqref{eq:dvadva}.

\medskip

By \Cref{lem:existenceInverseLimit}, both simple inverse systems $(\rho^I)_{I\in\I}$ and $(\theta^I_*\nu^I)_{I\in\I}$ have inverse limits.
Let $\pi$ be the inverse limit of $(\rho^I)_{I\in\I}$, and let $\mu'$ be the inverse limit of $(\theta^I_*\nu^I)_{I\in\I}$.
In particular, $\pi$ is a finite measure on the measurable space $\prod_{d\in D}\F_{k_d}$, and $\mu'$ is a finite measure on the measurable space $\prod_{d\in D}\F_{k_d}^2$.
In fact, $\pi$ is a probability measure as each of the probability measures $\rho^I$, $I\in\I$, is the pushforward of $\pi$ (along the corresponding projection).
We define a measure $\mu$ on the measurable space $\big(\prod_{d\in D}\F_{k_d}\big)^2$ by
\begin{equation}
\label{defmu}
\mu=\theta^{-1}_*\mu'.
\end{equation}
Then $(\pi,\mu)$ is a \sqg\ on $\prod_{d\in D}\F_{k_d}$.

\medskip

By the definition of inverse limits of simple inverse systems, for every $I\in\I$, it holds that
\begin{equation}
\label{obraz1}
\rho^I=(P_I)_*\pi
\end{equation}
and
\begin{equation}
\label{obraz2}
\theta^I_*\nu^I=(\widetilde P_I)_*\mu'.
\end{equation}
For every $I\in\I$ and every $\widetilde R\subseteq\big(\prod_{d\in I}[k_d]\big)^2$ we further have
\begin{equation}
\label{jetomorphism}
\begin{split}
\nu^I(\widetilde R)=&\theta^I_*\nu^I\big(\theta^I(\widetilde R)\big)\\
\stackrel{\eqref{obraz2}}{=}&(\widetilde P_I)_*\mu'\big(\theta^I(\widetilde R)\big)\\
=&\theta^{-1}_*\mu'\big(\theta^{-1}(\widetilde P_I^{-1}(\theta^I(\widetilde R)))\big)\\
\stackrel{\eqref{defmu}}{=}&\mu\big(\theta^{-1}(\widetilde P_I^{-1}(\theta^I(\widetilde R)))\big)\\
=&\mu\big((\widetilde P_I\circ\theta)^{-1}(\theta^I(\widetilde R))\big)\\
\stackrel{\eqref{eq:jednadvesouradnice2}}{=}&\mu\big((\theta^I\circ P_I^{\otimes2})^{-1}(\theta^I(\widetilde R))\big)\\
=&\mu\big((P_I^{\otimes2})^{-1}(\widetilde R)\big)\\
=&(P_I)^{\otimes2}_*\mu(\widetilde R).
\end{split}
\end{equation}
By~\eqref{obraz1}, \eqref{jetomorphism} and~\Cref{ex:deterministicMorphism}, the Markov kernel $\kappa_{P_I}$ is a morphism from $(\pi,\mu)$ to $(\rho^I,\nu^I)$, $I\in\I$.

\medskip

In the rest of the proof, we will show that $(\pi,\mu)$ is a limit of the sequence $(\pi_n,\mu_n)_{n\in\N}$.
That is, we will show that
\[\S_k(\pi,\mu)=S_k,\quad k\in\N.\]
To this end, for the rest of the proof, we fix $k\in\N$.
We also fix $d=(\rho,\nu)\in D_k\subseteq D$ for a while.
We recall that
\begin{equation}
\label{eq:limitniMiry3}
\rho_n^{\{d\}}=(\kappa^d_n)_\star\pi_n=\rho_n,\quad n\in\N,
\end{equation}
and
\begin{equation}
\label{eq:limitniMiry4}
\nu_n^{\{d\}}=(\kappa^d_n)^{\otimes 2}_\star\mu_n=\nu_n,\quad n\in\N.
\end{equation}
Since the sequence $(\rho_n,\nu_n)_{n=1}^\infty$ converges to $d=(\rho,\nu)$ in the space of all \sqg s on $\F_{k_d}$, we obtain from~\eqref{eq:limitniMiry1}, \eqref{eq:limitniMiry2}, \eqref{eq:limitniMiry3} and~\eqref{eq:limitniMiry4} that
\begin{equation}
\label{eq:drhonu}
d=(\rho^{\{d\}},\nu^{\{d\}}).
\end{equation}
We already know that the Markov kernel $\kappa_{P_{\{d\}}}$ is a morphism from $(\pi,\mu)$ to $(\rho^{\{d\}},\nu^{\{d\}})$, and so it follows from~\eqref{eq:drhonu} that $d\in(\pi,\mu)^\downarrow_k$.
As this is true for every $d\in D_k$, we conclude that
\[S_k=\overline{D_k}\subseteq\overline{(\pi,\mu)^\downarrow_k}=\S_k(\pi,\mu).\]
To complete the proof, it only remains to show the reverse inclusion.
But before we continue, we need a little detour.

\medskip

For every $J\in\I$ and every $\alpha=(\alpha_d)_{d\in J}\in\prod_{d\in J}[k_d]$, we put
\[U_\alpha=\Big\{(\beta_d)_{d\in D}\in\prod_{d\in D}[k_d]:\beta_d=\alpha_d\text{ whenever }d\in J\Big\}.\]

\begin{claim}
\label{claim}
Let $\gamma$ be a finite measure on $\prod_{d\in D}\F_{k_d}$.
Let $\psi_m\colon\prod_{d\in D}[k_d]\to[0,1]$, $m=1,\ldots,k$, be measurable functions with $\sum_{m=1}^k\psi_m\equiv 1$.
Let $\eta>0$ be given.
Then there are $J\in\I$ and functions $\psi'_m\colon\prod_{d\in D}[k_d]\to[0,1]$, $m=1,\ldots,k$, with $\sum_{m=1}^k\psi'_m\equiv 1$ such that, for every $m=1,\ldots,k$, we have
\begin{itemize}
\item for every $\alpha\in\prod_{d\in J}[k_d]$, the function $\psi'_m$ is constant on $U_\alpha$,
\item $\|\psi_m-\psi'_m\|_{L^1(\gamma)}<\eta$.
\end{itemize}
\end{claim}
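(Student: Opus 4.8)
The plan is to build the required $\psi'_m$ as the \emph{atom-wise $\gamma$-average} of $\psi_m$ over cylinders determined by finitely many coordinates, i.e.\ as a concrete version of the conditional expectation of $\psi_m$ given finitely many coordinates, and then to push the number of coordinates high enough using the fact that such averages converge to $\psi_m$ in $L^1(\gamma)$. Concretely, I would fix an enumeration $D=\{d_1,d_2,\ldots\}$, set $J_n=\{d_1,\ldots,d_n\}\in\I$, and let $\mathcal G_n$ be the sub-$\sigma$-algebra of the product $\sigma$-algebra on $\prod_{d\in D}\F_{k_d}$ generated by the coordinates in $J_n$. Since every factor $[k_d]$ is finite, the atoms of $\mathcal G_n$ are exactly the cylinders $U_\alpha$, $\alpha\in\prod_{d\in J_n}[k_d]$. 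For each $m\in[k]$ I then define $\psi_m^{(n)}$ to be constant on each such atom, with value $\gamma(U_\alpha)^{-1}\int_{U_\alpha}\psi_m\,d\gamma$ when $\gamma(U_\alpha)>0$ and value $1/k$ when $\gamma(U_\alpha)=0$.

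The three pointwise requirements are then immediate from this formula, and the key point is that this single construction handles the range constraint and the summation constraint simultaneously, with no subsequent truncation or renormalization needed. Indeed, $\psi_m^{(n)}$ is constant on every $U_\alpha$ by definition; it takes values in $[0,1]$ because an average of the $[0,1]$-valued function $\psi_m$ again lies in $[0,1]$ (and $1/k\in[0,1]$); and $\sum_{m=1}^k\psi_m^{(n)}\equiv 1$ because, on a positive-measure atom, $\sum_m\gamma(U_\alpha)^{-1}\int_{U_\alpha}\psi_m\,d\gamma=\gamma(U_\alpha)^{-1}\int_{U_\alpha}\sum_m\psi_m\,d\gamma=\gamma(U_\alpha)^{-1}\gamma(U_\alpha)=1$ by the hypothesis $\sum_m\psi_m\equiv 1$, while on a $\gamma$-null atom the $k$ values $1/k$ sum to $1$ trivially.

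The only real work is the estimate $\|\psi_m-\psi_m^{(n)}\|_{L^1(\gamma)}\to 0$ as $n\to\infty$. Off the $\gamma$-null atoms, $\psi_m^{(n)}$ is precisely a version of the conditional expectation $\mathbb E_\gamma[\psi_m\mid\mathcal G_n]$; the $\sigma$-algebras $\mathcal G_n$ increase, and $\sigma\big(\bigcup_n\mathcal G_n\big)$ is the full product $\sigma$-algebra (because every coordinate projection is $\mathcal G_n$-measurable for $n$ large). Hence, after normalizing $\gamma$ to a probability measure, the martingale convergence theorem (Lévy's upward theorem) gives $\psi_m^{(n)}\to\psi_m$ in $L^1(\gamma)$, the $L^1$ convergence using that $\psi_m\in L^1(\gamma)$, which is where finiteness of $\gamma$ enters. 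If one prefers to avoid martingale machinery, the same convergence follows from the elementary fact that functions depending on finitely many coordinates are $L^1(\gamma)$-dense: the algebra $\bigcup_n\mathcal G_n$ generates the product $\sigma$-algebra, so it approximates every measurable set in symmetric difference, hence its indicators (and then its simple functions) are dense in $L^1(\gamma)$. Finally, since $[k]$ is finite, I would choose one $n$ so large that $\|\psi_m-\psi_m^{(n)}\|_{L^1(\gamma)}<\eta$ holds \emph{simultaneously} for all $m\in[k]$, and set $J=J_n$ and $\psi'_m=\psi_m^{(n)}$.

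I expect this convergence step to be the sole obstacle. The two subtleties it contains are both mild: obtaining a single $J$ that works for all $k$ functions at once, which is handled by the finiteness of $[k]$, and realizing the abstract conditional expectation as the concrete atom-wise average with a prescribed value on null atoms, which is handled by the discreteness and finiteness of each $[k_d]$ (so that the atoms of $\mathcal G_n$ are literally the sets $U_\alpha$, and the $1/k$ convention keeps $\psi'_m$ everywhere defined, $[0,1]$-valued, and summing to $1$ without affecting the $L^1(\gamma)$ norm).
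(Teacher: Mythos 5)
Your proposal is correct, but it proves the claim by a genuinely different route than the paper. The paper topologizes $\prod_{d\in D}[k_d]$ as a compact metrizable space, invokes regularity of $\gamma$ and Lusin's theorem to replace each $\psi_m$ by a continuous function off a set of $\gamma$-measure less than $\eta/(2k)$, repairs the sum-to-one constraint by hand, uses compactness to extract a finite $J$ on whose cylinders the continuous functions oscillate by at most a prescribed $\delta$, and then takes the minimum over each cylinder for $m=1,\ldots,k-1$ while defining $\psi_k'$ as the residual $1-\sum_{m<k}\psi_m^2$ (which requires a separate check that it stays in $[0,1]$ and a separate, slightly worse, error estimate for the $k$th function). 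Your conditional-expectation construction replaces all of this: positivity and linearity of the atom-wise average preserve the range and the summation constraint automatically and symmetrically in $m$, and the $L^1$ approximation comes from L\'evy's upward theorem applied to the filtration $\mathcal G_n$ (or, as you note, from the elementary fact that $\bigcup_n\mathcal G_n$ generates the product $\sigma$-algebra together with the $L^1$-contractivity of conditional expectation, which avoids martingale machinery altogether — if you take that route, do record that once $\|\psi_m-g\|_{L^1}<\varepsilon$ for some $\mathcal G_n$-measurable $g$, the same bound $2\varepsilon$ persists for all $n'\ge n$ because $g$ remains $\mathcal G_{n'}$-measurable). The trade-off is that your argument leans on the standard theory of conditional expectation, whereas the paper's stays at the level of regularity, Lusin, and uniform continuity; in exchange you get a shorter proof with no asymmetry between the $k$ functions and no need to topologize the space. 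Both arguments use the countability of $D$ and the finiteness of the factors in the same essential way (to identify the atoms of $\mathcal G_n$ with the cylinders $U_\alpha$), and both handle the degenerate case $\gamma=0$ trivially.
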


\begin{proof}
We put
\[W=\gamma\big(\prod_{d\in D}[k_d]\big).\]
The assertion is trivial if $W=0$, so we may assume that $W>0$.
	
We equip the set $\prod_{d\in D}[k_d]$ with the product topology (where we consider the discrete topology on each of the finite sets $[k_d]$, $d\in D$); then $\prod_{d\in D}[k_d]$ becomes a metrizable compact space.
The sets $U_\alpha$, where $\alpha\in\prod_{d\in I}[k_d]$ and $I\in\I$, form a base of the topology.
As the topology generates the product $\sigma$-algebra $\prod_{d\in D}2^{[k_d]}$, by~\cite[Theorem~7.8]{Folland}, every finite measure on $\prod_{d\in D}\F_{k_d}$ is regular.
In particular, the measure $\gamma$ is regular.

By Lusin's Theorem (see, e.g., \cite[Theorem~7.10]{Folland}) and regularity of $\gamma$, there are a compact set $K\subseteq\prod_{d\in D}[k_d]$ and continuous functions $\psi_m^1\colon\prod_{d\in D}[k_d]\to[0,1]$, $m=1,\ldots,k$, such that
\[\gamma\big(\prod_{d\in D}[k_d]\setminus K\big)<\frac\eta{2k},\]
and such that
\[\psi_m(\beta)=\psi_m^1(\beta)\quad\text{whenever}\quad\beta\in K,\quad m=1,\ldots,k.\]
Then, for every $\beta\in K$, we have
\[\sum_{m=1}^k\psi^1_m(\beta)=\sum_{m=1}^k\psi_m(\beta)=1.\]
By an easy modification of the functions $\psi^1_m$, $m=1,\ldots,k$, on the set $\prod_{d\in D}[k_d]\setminus K$, we may assure that $\sum_{m=1}^k\psi^1_m\equiv1$.

We put
\[\delta=\frac\eta{2kW}.\]
By continuity of the functions $\psi^1_m$, $m=1,\ldots,k$, every element of $\prod_{d\in D}[k_d]$ is contained in some set $U_\alpha$ (for some $\alpha\in\prod_{d\in I}[k_d]$ and $I\in\I$) such that, for every $m=1,\ldots,k$, each two values of $\psi_m^1$ on $U_\alpha$ differ by at most $\delta$.
As the space $\prod_{d\in D}[k_d]$ is compact, it can be covered by finitely many such sets, let us say by $U_{\alpha_1},\ldots,U_{\alpha_r}$.
For every $j=1,\ldots,r$, let $I_j\in\I$ be such that $\alpha_j\in\prod_{d\in I_j}[k_d]$.
We put $J=\cup_{j=1}^rI_j$.
Then it is easy to verify that the cover
\begin{equation}
\label{cover}
\Big\{U_\alpha:\quad\alpha\in\prod_{d\in J}[k_d]\Big\}
\end{equation}
(consisting of sets which are open and compact at the same time) of $\prod_{d\in D}[k_d]$ is actually a refinement of the cover $\big\{U_{\alpha_1},\ldots,U_{\alpha_r}\big\}$.
Consequently, for every $\alpha\in\prod_{d\in J}[k_d]$ and every $m=1,\ldots,k$, each two values of $\psi_m^1$ on $U_\alpha$ differ by at most $\delta$.
For every $m=1,\ldots,k$, let $\psi_m^2\colon\prod_{d\in D}[k_d]\to[0,1]$ be the function given by
\[\psi_m^2(\beta)=\min_{U_\alpha}\psi_m^1,\quad\beta\in U_\alpha, \alpha\in\prod_{d\in J}[k_d].\]
Finally, we define the functions 
$\psi_m'\colon\prod_{d\in D}[k_d]\to[0,1]$, $m=1,\ldots,k$, by
\[\psi_m'=\psi^2_m,\quad m=1,\ldots,k-1,\]
and
\[\psi_k'=1-\sum_{m=1}^{k-1}\psi^2_m;\]
the fact that all values of $\psi_k'$ belong to $[0,1]$ follows by the obvious fact that
\[0\le\sum_{m=1}^{k-1}\psi^2_m\le\sum_{m=1}^{k-1}\psi^1_m\le 1.\]
Then the functions $\psi_m'$, $m=1,\ldots,k$, sum to $1$ and each of them is constant on each set from the collection~\eqref{cover}.
We clearly have
\[\|\psi_m^2-\psi^1_m\|_{L^\infty(\gamma)}\le\delta,\quad m=1,\ldots,k-1.\]
It follows that, for every $m=1,\ldots,k-1$, we have
\begin{equation}
\label{eq:prvnich-k-1}
\begin{split}
\|\psi_m-\psi'_m\|_{L^1(\gamma)}\le&\|\psi_m-\psi_m^1\|_{L^1(\gamma)}+\|\psi_m^1-\psi'_m\|_{L^1(\gamma)}\\
<&\gamma\big(\prod_{d\in D}[k_d]\setminus K\big)+\|\psi_m^1-\psi^2_m\|_{L^\infty(\gamma)}\gamma\big(\prod_{d\in D}[k_d]\big)\\
\le&\frac\eta{2k}+W\delta\\
=&\frac\eta k.
\end{split}
\end{equation}
Finally, it holds that
\begin{equation*}
\begin{split}
\|\psi_k-\psi'_k\|_{L^1(\gamma)}=&\big\|\big(1-\sum_{m=1}^{k-1}\psi_m\big)-\big(1-\sum_{m=1}^{k-1}\psi'_m\big)\big\|_{L^1(\gamma)}\\
\le&\sum_{m=1}^{k-1}\|\psi_m-\psi'_m\|_{L^1(\gamma)}\\
\stackrel{\eqref{eq:prvnich-k-1}}{<}&\eta.
\end{split}
\end{equation*}
This concludes the proof of the claim.
\end{proof}

The basic structure of the rest of the proof is captured in \Cref{diagram}.

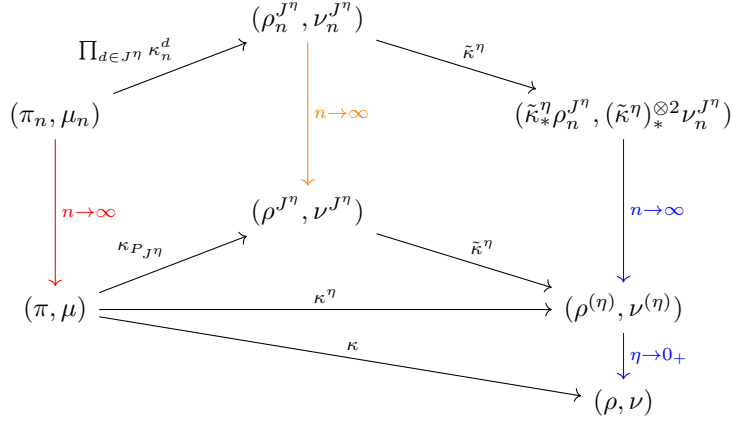
\begin{figure}
\centering
\begin{tikzcd}[column sep=huge]
& (\rho_n^{J^\eta},\nu_n^{J^\eta})
\arrow[dr,"\tilde\kappa^\eta"]
\arrow[dd,orange,"n\to\infty"] & \\
(\pi_n,\mu_n)
\arrow[dd,red,"n\to\infty"]
\arrow[ur,"\prod_{d\in J^\eta}\kappa^d_n"] & & (\tilde\kappa^\eta_*\rho_n^{J^\eta},(\tilde\kappa^\eta)^{\otimes 2}_*\nu_n^{J^\eta})
\arrow[dd,blue,"n\to\infty"] \\
& (\rho^{J^\eta},\nu^{J^\eta})
\arrow[dr,"\tilde\kappa^\eta"] & \\
(\pi,\mu)
\arrow[ur,"\kappa_{P_{J^\eta}}"]
\arrow[rr,"\kappa^\eta"] \arrow[drr,"\kappa"] & & (\rho^{(\eta)},\nu^{(\eta)})
\arrow[d,blue,"\eta\to0_+"] \\
& & (\rho,\nu)
\end{tikzcd}
\caption{
The red arrow represents the convergence which we want to prove.
Black arrows represent morphisms in the category of all \sqg s.
The orange arrow represents convergence in the sense of~\eqref{eq:limitniMiry1} and~\eqref{eq:limitniMiry2}.
Blue arrows represent convergence in the space of all \sqg s on $\F_k$.
}
\label{diagram}
\end{figure}

\medskip

As we already explained, it remains to show that $\S_k(\pi,\mu)\subseteq S_k$
(where $k$ still remains fixed).
In fact, it suffices to show that $(\pi,\mu)^\downarrow_k\subseteq S_k$, because $\S_k(\pi,\mu)$ is the topological closure of $(\pi,\mu)^\downarrow_k$ and the set $S_k$ is closed.
So we fix $(\rho,\nu)\in(\pi,\mu)^\downarrow_k$.
Let $\kappa$ be a morphism from $(\pi,\mu)$ to $(\rho,\nu)$.
Let $P^j\colon\big(\prod_{d\in D}[k_d]\big)^2\to\prod_{d\in D}[k_d]$ be the projection on the $j$th coordinate, $j=1,2$.
Let $\gamma$ be the measure on $\prod_{d\in D}\F_{k_d}$ given by
\begin{equation*}
\gamma=\pi+P^1_*\mu+P^2_*\mu.
\end{equation*}
We fix $\eta>0$ arbitrarily small.
By \Cref{claim} applied to the functions
\[
\kappa(\cdot,\{m\}),\quad m=1,\ldots,k,
\]
we obtain $J^\eta\in\I$ and functions $\psi^\eta_m\colon\prod_{d\in D}[k_d]\to[0,1]$, $m=1,\ldots,k$, which sum to $1$, such that, for every $m=1,\ldots,k$, we have
\begin{itemize}
\item for every $\alpha\in\prod_{d\in J^\eta}[k_d]$, the function $\psi^\eta_m$ is constant on $U_\alpha$,
\item $\|\kappa(\cdot,\{m\})-\psi^\eta_m\|_{L^1(\gamma)}<\eta$.
\end{itemize}
Let $\kappa^\eta$ be the Markov kernel from $\prod_{d\in D}\F_{k_d}$ to $\F_k$ given by
\[
\kappa^\eta(u,F)=\sum_{m\in F}\psi_m^\eta(u),\quad u\in\prod_{d\in D}[k_d],F\subseteq[k].
\]
Then, for every $m=1,\ldots,k$, we have
\begin{equation}
\label{eq:rozdilproostatni}
\|\kappa(\cdot,\{m\})-\kappa^\eta(\cdot,\{m\})\|_{L^1(\gamma)}=\|\kappa(\cdot,\{m\})-\psi^\eta_m\|_{L^1(\gamma)}<\eta.
\end{equation}
Let $(\rho^{(\eta)},\nu^{(\eta)})$ be the \sqg\ on $\F_k$ given by
\begin{equation}
\label{eins}
\rho^{(\eta)}=\kappa^\eta_*\pi\quad\text{and}\quad\nu^{(\eta)}=(\kappa^\eta)^{\otimes 2}_*\mu.
\end{equation}
Then~\eqref{eq:rozdilproostatni} and \Cref{lem:malaZmenaMorphismu} immediately imply that $(\rho^{(\eta)},\nu^{(\eta)})\to(\rho,\nu)$ in the space of all \sqg s on $\F_k$ as $\eta>0$ tends to $0$.
We will show that $(\rho^{(\eta)},\nu^{(\eta)})\in S_k$ for every $\eta>0$.
Since $S_k$ is a closed set, it will follow that $(\rho,\nu)\in S_k$, which will conclude the proof.

So we fix $\eta>0$ for the rest of the proof.
We recall that the Markov kernel $\kappa^\eta$ is a morphism from $(\pi,\mu)$ to $(\rho^{(\eta)},\nu^{(\eta)})$ (by~\eqref{eins}) and that the Markov kernel $\kappa_{P_{J^\eta}}$ is a morphism from $(\pi,\mu)$ to $(\rho^{J^\eta},\nu^{J^\eta})$.
We will use these facts to find a morphism from $(\rho^{J^\eta},\nu^{J^\eta})$ to $(\rho^{(\eta)},\nu^{(\eta)})$.
By our definition of $\kappa^\eta$, for every $\alpha\in\prod_{d\in J^\eta}[k_d]$ and every $F\subseteq[k]$, the function $\kappa^\eta(\cdot,F)$ is constant on $U_\alpha$.
In other words, for every $\alpha\in\prod_{d\in J^\eta}[k_d]$, the probability measure $\kappa^\eta(u,\cdot)$ on $\F_k$ does not depend on the choice of $u\in U_\alpha$; let us denote this probability measure by $p^\eta_\alpha$.
Let $\tilde\kappa^\eta$ be the Markov kernel from $\F^{J^\eta}$ to $\F_k$ given by
\[
\tilde\kappa^\eta(\alpha,F)=p^\eta_\alpha(F),\quad\alpha\in\prod_{d\in J^\eta}[k_d],F\subseteq[k].
\]
Then, for every $F\subseteq[k]$, we have
\begin{equation}
\label{podminka1}
\begin{split}
\rho^{(\eta)}(F)=&\kappa^\eta_*\pi(F)\\
=&\int_{\prod_{d\in D}[k_d]}\kappa^\eta(u,F)\,d\pi(u)\\
=&\sum_{\alpha\in\prod_{d\in J^\eta}[k_d]}\pi(U_\alpha)p^\eta_\alpha(F)\\
=&\sum_{\alpha\in\prod_{d\in J^\eta}[k_d]}\rho^{J^\eta}(\{\alpha\})\tilde\kappa^\eta(\alpha,F)\\
=&\tilde\kappa^\eta_*\rho^{J^\eta}(F).
\end{split}
\end{equation}
Similarly, for every $\widetilde F\subseteq[k]^2$, it holds that
\begin{equation}
\label{podminka2}
\begin{split}
\nu^{(\eta)}(\widetilde F)=&(\kappa^\eta)^{\otimes 2}_*\mu(\widetilde F)\\
=&\int_{\big(\prod_{d\in D}[k_d]\big)^2}(\kappa^\eta)^{\otimes 2}((u_1,u_2),\widetilde F)\,d\mu(u_1,u_2)\\
=&\sum_{(\alpha_1,\alpha_2)\in\big(\prod_{d\in J^\eta}[k_d]\big)^2}\mu(U_{\alpha_1}\times U_{\alpha_2})(p^\eta_{\alpha_1}\times p^\eta_{\alpha_2})(\widetilde F)\\
=&\sum_{(\alpha_1,\alpha_2)\in\big(\prod_{d\in J^\eta}[k_d]\big)^2}\nu^{J^\eta}(\{(\alpha_1,\alpha_2)\})(\tilde\kappa^\eta(\alpha_1,\cdot)\times\tilde\kappa^\eta(\alpha_2,\cdot))(\widetilde F)\\
=&(\tilde\kappa^\eta)^{\otimes 2}_*\nu^{J^\eta}(\widetilde F).
\end{split}
\end{equation}
By~\eqref{podminka1} and~\eqref{podminka2}, we obtain that $\tilde\kappa^\eta$ is a morphism from $(\rho^{J^\eta},\nu^{J^\eta})$ to $(\rho^{(\eta)},\nu^{(\eta)})$.

Let us recall that the \sqg\ $(\rho^{J^\eta},\nu^{J^\eta})$ was defined from the sequence $(\rho^{J^\eta}_n,\nu^{J^\eta}_n)_{n=1}^\infty$ as in~\eqref{eq:limitniMiry1} and~\eqref{eq:limitniMiry2}.
Thus, \Cref{lem:malaZmenaGraphonu} easily implies that the sequence $\big(\tilde\kappa^\eta_*\rho_n^{J^\eta},(\tilde\kappa^\eta)^{\otimes 2}_*\nu_n^{J^\eta}\big)_{n=1}^\infty$ of \sqg s converges to $(\rho^{(\eta)},\nu^{(\eta)})=(\tilde\kappa_*^\eta\rho^{J^\eta},(\tilde\kappa^\eta)^{\otimes 2}_*\nu^{J^\eta})$ in the space of all \sqg s on $\F_k$.
For every $n\in\N$, the Markov kernel $\tilde\kappa^\eta\circ\big(\prod_{d\in J^\eta}\kappa^d_n\big)$ is a morphism from $(\pi_n,\mu_n)$ to $(\tilde\kappa^\eta_*\rho_n^{J^\eta},(\tilde\kappa^\eta)^{\otimes 2}_*\nu_n^{J^\eta})$ (see \Cref{diagram}).
In particular, \[(\tilde\kappa^\eta_*\rho_n^{J^\eta},(\tilde\kappa^\eta)^{\otimes 2}_*\nu_n^{J^\eta})\in(\pi_n,\mu_n)^\downarrow_k\subseteq\S_k(\pi_n,\mu_n),\quad n\in\N.\]
Consequently, as the sequence $(\S_k(\pi_n,\mu_n))_{n=1}^\infty$ of compact sets converges to $S_k$, we easily obtain that $(\rho^{(\eta)},\nu^{(\eta)})\in S_k$.
This completes the proof.
\end{proof}

\begin{remark}
\label{rem:nonatomic}
The underlying measurable space of the limit \sqg\ $(\pi,\mu)$ constructed in the proof of \Cref{thm:existence} is the countable product of finite spaces.
In particular, all its singleton subsets are measurable.
By \Cref{ex:nonatomic}, there is a \sqg\ $(\pi',\mu')$ on a standard Borel measurable space such that the measure $\pi'$ is continuous (that is, all singletons have $\pi'$-measure zero) and such that there are morphisms between $(\pi,\mu)$ and $(\pi',\mu')$ in both directions.
Then $(\pi,\mu)$ and $(\pi',\mu')$ have the same $k$-shapes, and so $(\pi',\mu')$ is also a limit of the convergent sequence $(\pi_n,\mu_n)_{n=1}^\infty$.

This shows that the limit \sqg\ of a given convergent sequence can be always chosen in such a way that its underlying measurable space is standard Borel and such that the probability measure describing the distribution of vertices is continuous.
Consequently, by an application of the isomorphism theorem for measures (see e.g.~\cite[Theorem~17.41]{Kechris}), we can even find the limit \sqg\ in such a way that its underlying measurable space is the unit interval and that the distribution of vertices is the Lebesgue measure.
\end{remark}

\section{Comparison with s-convergence}
\label{sec:s-graphons}

The notion of convergence of \sqg s from \Cref{def:convergence} is inspired by s-convergence of graph sequences introduced in~\cite{KLS}.
Both these convergence notions are defined as convergence of certain $k$-shapes in the Vietoris topology;
the only difference is hidden in the definition of the $k$-shapes.
In \Cref{def:k-shape}, we defined the $k$-shape $\S_k(\pi,\mu)$ of a \sqg\ $(\pi,\mu)$ as the topological closure of the set
\[(\pi,\mu)^\downarrow_k=\big\{(\rho,\nu)\in(\pi,\mu)^\downarrow:(\rho,\nu)\text{ is a \sqg\ on $\F_k$}\big\}.\]
Alternatively, in the spirit of~\cite{KLS}, we could require all elements of the $k$-shape to have the same fixed distribution of vertices, namely the normalized counting measure on $\F_k$, which we denote by $\gamma_k$.
To realize this idea, for every $k\in\N$, let us consider the space of all finite measures on $\F_k^2$ as a topological subspace of $\R^{[k]^2}$, and let us define the $k$-shape $\widetilde\S_k(\pi,\mu)$ of a \sqg\ $(\pi,\mu)$ as the topological closure of the set
\[\big\{\nu:(\gamma_k,\nu)\in(\pi,\mu)^\downarrow_k\big\}.\]
Up to a simple identification of non-negative $k$-by-$k$ matrices with finite measures on $\F_k^2$, this is a straightforward generalization of the $k$-shapes (of finite graphs/s-graphons) introduced in~\cite{KLS}.
The main goal of this section is to show that, no matter what definition of $k$-shapes we use (either $\S_k(\cdot)$ or $\widetilde\S_k(\cdot)$), we obtain the same convergence notion.

\medskip

In the following, for any non-empty finite set $F$, we denote by $d_\infty^F$ the maximum metric on $\R^F$.
We identify finite measures on the measurable space $(F,2^F)$ with the corresponding elements of $\R^F$, so that we can measure the $d_\infty^F$-distance of two such measures.
We also identify \sqg s on $\F_k$ with the corresponding elements of $\R^{[k]\cup[k]^2}$, so that we can measure the $d_\infty^{[k]\cup[k]^2}$-distance of two such \sqg s, $k\in\N$.

\begin{lemma}
\label{lem:zmenaPrsti}
Suppose that $(\pi,\mu)$ is a \sqg\ on a measurable space $(X,\A)$, $k\in\N$ and $(\rho,\nu)\in(\pi,\mu)^\downarrow_k$.
Suppose that
\begin{equation}
\label{eq:min2}
d:=d_\infty^{[k]}(\rho,\gamma_k)\le\frac1{4k^2}.
\end{equation}
Then there is a finite measure $\nu'$ on $\F_k^2$ such that $(\gamma_k,\nu')\in(\pi,\mu)^\downarrow_k$ and such that
\[
d_\infty^{[k]^2}(\nu,\nu')\le 4(1+2\mu(X^2))k(k-1)d.
\]
\end{lemma}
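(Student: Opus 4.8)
The plan is to fix the distribution of vertices by post-composing a witnessing morphism with a small self-map of $\F_k$. Since $(\rho,\nu)\in(\pi,\mu)^\downarrow_k$, fix a morphism $\kappa$ from $(\pi,\mu)$ to $(\rho,\nu)$, so that $\rho=\kappa_*\pi$ and $\nu=\kappa^{\otimes2}_*\mu$. I will construct a Markov kernel $\sigma$ from $\F_k$ to $\F_k$ enjoying the two properties $\sigma_*\rho=\gamma_k$ and $\sigma$ close to the identity (in the sense that the off-diagonal entries are small). Setting $\kappa'=\sigma\circ\kappa$ and $\nu'=\sigma^{\otimes2}_*\nu$, \Cref{cor:slozeni} gives $\kappa'_*\pi=\sigma_*(\kappa_*\pi)=\sigma_*\rho=\gamma_k$, while \Cref{l:otimes} combined with \Cref{cor:slozeni} gives $(\kappa')^{\otimes2}_*\mu=\sigma^{\otimes2}_*(\kappa^{\otimes2}_*\mu)=\sigma^{\otimes2}_*\nu=\nu'$. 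Hence $\kappa'$ witnesses $(\gamma_k,\nu')\in(\pi,\mu)^\downarrow_k$, and it only remains to estimate $d_\infty^{[k]^2}(\nu,\nu')$.

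To build $\sigma$, write $\varepsilon_i=\rho(\{i\})-\tfrac1k$ for $i\in[k]$, so that $\sum_{i}\varepsilon_i=0$ and $|\varepsilon_i|\le d$ by~\eqref{eq:min2}. The hypothesis $d\le\tfrac1{4k^2}$ guarantees $\rho(\{i\})\ge\tfrac1k-d\ge\tfrac{4k-1}{4k^2}>0$, which is exactly what makes the construction possible. Let $S=\sum_{\varepsilon_i>0}\varepsilon_i=\sum_{\varepsilon_j<0}(-\varepsilon_j)$ be the total excess; if $S=0$ then $\rho=\gamma_k$ and I take $\sigma=\kappa_{\mathrm{id}}$. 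Otherwise I transport the surplus of each heavy state to the light states proportionally to their deficits: for $\varepsilon_i>0$ and $\varepsilon_j<0$ put $t_{ij}=\varepsilon_i(-\varepsilon_j)/S$, and define the stochastic matrix (with $\delta_{ij}$ the Kronecker delta)
\[
\sigma(i,\{j\})=
\begin{cases}
1-\varepsilon_i/\rho(\{i\}),& j=i\text{ and }\varepsilon_i>0,\\
t_{ij}/\rho(\{i\}),& \varepsilon_i>0\text{ and }\varepsilon_j<0,\\
1,& j=i\text{ and }\varepsilon_i\le0,\\
0,&\text{otherwise.}
\end{cases}
\]
A direct check (using $\sum_{\varepsilon_j<0}t_{ij}=\varepsilon_i$ and $\sum_{\varepsilon_i>0}t_{ij}=-\varepsilon_j$) shows that the rows of $\sigma$ sum to $1$, that all entries are nonnegative since $\rho(\{i\})-\varepsilon_i=\tfrac1k$, and that $\sum_i\rho(\{i\})\sigma(i,\{j\})=\tfrac1k$ for every $j$, i.e.\ $\sigma_*\rho=\gamma_k$.

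For the estimate I record that the deviation $\tau(i,\{j\}):=\sigma(i,\{j\})-\delta_{ij}$ is supported on the heavy rows and satisfies $|\tau(i,\{j\})|\le\varepsilon_i/\rho(\{i\})\le d/\rho(\{i\})\le 4k^2d/(4k-1)\le 2kd$ for all $i,j$ (the off-diagonal bound using $t_{ij}\le\varepsilon_i$). Expanding $\nu'(\{(a,b)\})=\sum_{i,j}\nu(\{(i,j)\})\,\sigma(i,\{a\})\sigma(j,\{b\})$ and substituting $\sigma(i,\{a\})=\delta_{ia}+\tau(i,\{a\})$ produces $\nu(\{(a,b)\})$ plus three error sums, each weighted by $\nu$. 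Bounding the total $\nu$-mass by $\nu([k]^2)=\mu(X^2)$ (via \Cref{lem:zachovaniMiry}) and using the uniform bound on $|\tau|$, the two linear terms are each at most $2kd\,\mu(X^2)$ and the quadratic term is at most $(2kd)^2\mu(X^2)\le d\,\mu(X^2)$, where the last step absorbs the quadratic contribution using $d\le\tfrac1{4k^2}$. Collecting the three contributions yields $d_\infty^{[k]^2}(\nu,\nu')\le(4k+1)\,\mu(X^2)\,d$, which is dominated by $4(1+2\mu(X^2))k(k-1)d$ for $k\ge2$ and is trivial for $k=1$, since there $\rho=\gamma_k$ forces $d=0$ and one may take $\nu'=\nu$.

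The only genuinely delicate point is the construction of $\sigma$ in the second paragraph: it must hit the uniform distribution $\gamma_k$ exactly while staying $O(kd)$-close to the identity, and both its well-definedness (division by $\rho(\{i\})$) and the smallness of its off-diagonal entries rely crucially on the lower bound $\rho(\{i\})\ge\tfrac{4k-1}{4k^2}$ supplied by the hypothesis $d\le\tfrac1{4k^2}$. Once $\sigma$ is in hand, the remaining steps are the routine bookkeeping of pushforwards along a composition and an elementary two-variable expansion.
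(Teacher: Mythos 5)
Your proof is correct, but it takes a genuinely different route from the paper's. The paper modifies the witnessing kernel upstream, directly on $X$: it runs an induction of at most $k$ steps, each step picking an overweighted state $m_1$ and an underweighted state $m_2$ and transferring mass from the former to the latter by replacing $\kappa^j(\cdot,\{m_1\})$ with a truncation $\max\{\kappa^j(\cdot,\{m_1\})-\beta_0,0\}$, where $\beta_0\in(0,2dk)$ is found by an intermediate-value argument; the accumulated pointwise perturbation of size at most $2k(k-1)d$ is then converted into the stated bound on $\nu$ via \Cref{lem:malaZmenaMorphismu}. You instead factor the correction downstream through a single explicit stochastic matrix $\sigma$ on $[k]$ with $\sigma_*\rho=\gamma_k$, obtained by transporting the surplus of the heavy states to the light states proportionally to their deficits, and then expand $\sigma^{\otimes2}_*\nu$ by hand. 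Your construction checks out completely: the row sums, nonnegativity, the identity $\sigma_*\rho=\gamma_k$, and the bound $|\sigma(i,\{j\})-\delta_{ij}|\le 2kd$ are all verified correctly (in fact, for a heavy index $i$ one automatically has $\rho(\{i\})>\tfrac1k$, so $\varepsilon_i/\rho(\{i\})<kd$ and the hypothesis $d\le\tfrac1{4k^2}$ is really only needed to absorb the quadratic error term, not for well-definedness), and your final estimate $(4k+1)\mu(X^2)d$ is indeed dominated by the stated $4(1+2\mu(X^2))k(k-1)d$ for $k\ge2$, the case $k=1$ being vacuous since there $d=0$. What your approach buys is a cleaner, purely finite-dimensional argument with a sharper constant (linear rather than quadratic in $k$), no appeal to \Cref{lem:malaZmenaMorphismu}, and a correction $\sigma$ that depends only on $\rho$ and not on the chosen morphism $\kappa$; what the paper's upstream modification buys is a closer alignment with the argument of \cite[Lemma~3.1]{KLS} from which it is adapted, at the cost of the iterative construction and the intermediate-value step.
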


\begin{proof}
The idea of the proof is taken from the proof of~\cite[Lemma~3.1]{KLS}.

Let $\kappa$ be a morphism from $(\pi,\mu)$ to $(\rho,\nu)$.
Inductively, we will modify the Markov kernel $\kappa$ (in finitely many steps) to obtain a new Markov kernel $\kappa'$ from $(X,\A)$ to $\F_k$ such that (among other properties) it holds that $\kappa'_*\pi=\gamma_k$.
As the first step, we put $\kappa^1=\kappa$.
Now suppose that, for some $j\in\N$, we have already constructed a Markov kernel $\kappa^j$ from $(X,\A)$ to $\F_k$ such that
\begin{equation}
\label{eq:maxDist}
d_\infty^{[k]}(\kappa^j_*\pi,\gamma_k)\le d,
\end{equation}
and such that
\begin{equation}
\label{eq:o1vic}
\big|\big\{m\in[k]:\kappa^j_*\pi(\{m\})=\frac1k\big\}\big|\ge j-1.
\end{equation}
If $\kappa^j_*\pi=\gamma_k$ then we put $\kappa'=\kappa^j$ and stop the construction.
Otherwise, as $\kappa^j_*\pi$ is a probability measure,
there are $m_1,m_2\in[k]$ such that
\begin{equation}
\label{eq:sandwich}
\kappa^j_*\pi(\{m_1\})>\frac1k\quad\text{and}\quad\kappa^j_*\pi(\{m_2\})<\frac1k.
\end{equation}
Then it holds that
\begin{equation}
\label{eq:druhyDilek}
\begin{split}
\frac1k\stackrel{\eqref{eq:sandwich}}{<}&\kappa^j_*\pi(\{m_1\})\\
=&\int_{\big\{x\in X:\kappa^j(x,\{m_1\})\le 2dk\big\}}\kappa^j(x,\{m_1\})\,d\pi(x)\\
&+\int_{\big\{x\in X:\kappa^j(x,\{m_1\})>2dk\big\}}\kappa^j(x,\{m_1\})\,d\pi(x)\\
\le&2dk+\pi\big(\big\{x\in X:\kappa^j(x,\{m_1\})>2dk\big\}\big).
\end{split}
\end{equation}
For every $\beta\ge 0$, let $F_\beta\colon X\to[0,1]$ be the function defined by
\begin{equation}
\label{eq:F_beta}
F_\beta(x)=\max\big\{\kappa^j(x,\{m_1\})-\beta,0\big\},\quad x\in X.
\end{equation}
Then we have
\begin{equation}
\label{eq:shora}
\begin{split}
\int_XF_0(x)\,d\pi(x)=&\int_X\kappa^j(x,\{m_1\})\,d\pi(x)\\
&=\kappa^j_*\pi(\{m_1\})\\
\stackrel{\eqref{eq:sandwich}}{>}&\frac1k
\end{split}
\end{equation}
and
\begin{equation}
\label{eq:zdola}
\begin{split}
\int_XF_{2dk}(x)\,d\pi(x)=&\int_{\big\{x\in X:\kappa^j(x,\{m_1\})>2dk\big\}}\big(\kappa^j(x,\{m_1\})-2dk\big)\,d\pi(x)\\
\le&\int_X\kappa^j(x,\{m_1\})\,d\pi(x)-2dk\pi\big(\big\{x\in X:\kappa^j(x,\{m_1\})>2dk\big\}\big)\\
\stackrel{\eqref{eq:druhyDilek}}{<}&\kappa^j_*\pi(\{m_1\})-2dk\big(\frac1k-2dk\big)\\
\stackrel{\eqref{eq:maxDist}}{\le}&\frac1k-d+4d^2k^2\\
\stackrel{\eqref{eq:min2}}{\le}&\frac1k.
\end{split}
\end{equation}
By the obvious continuity of the map
\begin{equation*}
\beta\mapsto\int_XF_\beta(x)\,d\pi(x),\quad\beta\ge0,
\end{equation*}
it follows from~\eqref{eq:shora} and~\eqref{eq:zdola} that there is some $\beta_0\in(0,2dk)$ such that
\begin{equation}
\label{eq:intF_beta0}
\int_XF_{\beta_0}(x)\,d\pi(x)=\frac1k.
\end{equation}
Let $\kappa^{j+1}$ be the Markov kernel from $(X,\A)$ to $\F_k$ given by
\begin{multline}
\label{indStep}
\kappa^{j+1}(x,\{m\})=
\begin{cases}
\kappa^j(x,\{m\}),&m\in[k]\setminus\{m_1,m_2\},\\
F_{\beta_0}(x),&m=m_1,\\
\kappa^j(x,\{m_1\})+\kappa^j(x,\{m_2\})-F_{\beta_0}(x),&m=m_2,
\end{cases}
\\
x\in X.
\end{multline}
Then it holds that
\begin{equation}
\label{eq:stareM}
\kappa^{j+1}_*\pi(\{m\})=\kappa^j_*\pi(\{m\}),\quad m\in[k]\setminus\{m_1,m_2\},
\end{equation}
as well as
\begin{equation}
\label{eq:m1}
\kappa^{j+1}_*\pi(\{m_1\})=\frac1k
\end{equation}
(by~\eqref{eq:intF_beta0}), and
\begin{equation}
\label{eq:m2}
\begin{split}
&\Big|\kappa^{j+1}_*\pi(m_2)-\frac1k\Big|\\
=&\Big|\int_X\kappa^{j+1}(x,\{m_2\})\,d\pi(x)-\frac1k\Big|\\
=&\Big|\int_X\kappa^j(x,\{m_1\})\,d\pi(x)+\int_X\kappa^j(x,\{m_2\})\,d\pi(x)-\int_XF_{\beta_0}(x)\,d\pi(x)-\frac1k\Big|\\
\stackrel{\eqref{eq:intF_beta0}}{=}&\Big|\Big(\kappa^j_*\pi(\{m_1\})-\frac1k\Big)+\Big(\kappa^j_*\pi(\{m_2\})-\frac1k\Big)\Big|\\
\stackrel{\eqref{eq:maxDist},\eqref{eq:sandwich}}{<}&d.
\end{split}
\end{equation}
By~\eqref{eq:maxDist}, \eqref{eq:stareM}, \eqref{eq:m1} and \eqref{eq:m2}, it follows that
\begin{equation*}
d_\infty^{[k]}(\kappa^{j+1}_*\pi,\gamma_k)\le d.
\end{equation*}
Also, \eqref{eq:o1vic}, \eqref{eq:sandwich}, \eqref{eq:stareM} and \eqref{eq:m1} imply that
\begin{equation*}
\big|\big\{m\in[k]:\kappa^{j+1}_*\pi(\{m\})=\frac1k\big\}\big|\ge j.
\end{equation*}
This finishes the inductive step.
We note that, by~\eqref{eq:F_beta} and~\eqref{indStep}, it holds that
\begin{equation}
\label{eq:zmenaOBeta}
\big|\kappa^{j+1}(x,\{m\})-\kappa^j(x,\{m\})\big|\le\beta_0<2dk,\quad x\in X,m\in[k].
\end{equation}

By~\eqref{eq:o1vic}, since each $\kappa^j_*\pi$ is a probability measure, the construction stops at the $l$th step for some $l\le k$.
Then, as $\kappa=\kappa^1$ and $\kappa'=\kappa^l$, we obtain from~\eqref{eq:zmenaOBeta} that
\begin{equation}
\label{eq:(k-1)beta}
\big|\kappa(x,\{m\})-\kappa'(x,\{m\})\big|\le2k(k-1)d,\quad x\in X,m\in[k].
\end{equation}
Let $P^j\colon X^2\to X$ be the projection on the $j$th coordinate, $j=1,2$, and let $\gamma$ be the measure on $(X,\A)$ given by
\begin{equation*}
\gamma=\pi+P^1_*\mu+P^2_*\mu.
\end{equation*}
Then, by~\eqref{eq:(k-1)beta}, it holds that
\begin{equation*}
\big\|\kappa(\cdot,\{m\})-\kappa'(\cdot,\{m\})\big\|_{L^1(\gamma)}\le2(1+2\mu(X^2))k(k-1)d,\quad m\in[k].
\end{equation*}
So, if we put $\nu'=(\kappa')^{\otimes2}_*\mu$ (so that $(\gamma_k,\nu')\in(\pi,\mu)^\downarrow_k$), then \Cref{lem:malaZmenaMorphismu} implies that
\begin{equation*}
\big|\nu(\{(m_1,m_2)\})-\nu'(\{(m_1,m_2)\})\big|\le4(1+2\mu(X^2))k(k-1)d,\quad (m_1,m_2)\in[k]^2.
\end{equation*}
This completes the proof.
\end{proof}

\begin{corollary}
\label{cor:malyVelkyShapeEasy}
Let $(\pi,\mu)$ be a \sqg\ on a measurable space $(X,\A)$ and let $k\in\N$ be fixed.
Then
\[
\widetilde S_k(\pi,\mu)=\big\{\nu:(\gamma_k,\nu)\in\S_k(\pi,\mu)\big\}.
\]
\end{corollary}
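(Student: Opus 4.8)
The plan is to verify the two inclusions between the sets
$$A=\big\{\nu:(\gamma_k,\nu)\in(\pi,\mu)^\downarrow_k\big\}\qquad\text{and}\qquad B=\big\{\nu:(\gamma_k,\nu)\in\S_k(\pi,\mu)\big\},$$
recalling that by definition $\widetilde S_k(\pi,\mu)=\overline A$, the closure being taken in the space of finite measures on $\F_k^2$ (identified with $\R^{[k]^2}$), while $\S_k(\pi,\mu)=\overline{(\pi,\mu)^\downarrow_k}$ by \Cref{def:k-shape}. The asserted identity is exactly $\overline A=B$.

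First I would dispatch the inclusion $\overline A\subseteq B$. The affine map $\iota\colon\nu\mapsto(\gamma_k,\nu)$ sending a finite measure on $\F_k^2$ to a \sqg\ on $\F_k$ is continuous (under the identifications with $\R^{[k]^2}$ and $\R^{[k]\cup[k]^2}$), and $\S_k(\pi,\mu)$ is closed, being a topological closure. Hence $B=\iota^{-1}\big(\S_k(\pi,\mu)\big)$ is closed. Since every $\nu\in A$ satisfies $(\gamma_k,\nu)\in(\pi,\mu)^\downarrow_k\subseteq\S_k(\pi,\mu)$, we have $A\subseteq B$, and taking closures gives $\overline A\subseteq B$.

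The substance lies in $B\subseteq\overline A$, and this is precisely what \Cref{lem:zmenaPrsti} was prepared for. Given $\nu\in B$, the fact that $\S_k(\pi,\mu)$ is a closure yields a sequence $(\rho_n,\nu_n)\in(\pi,\mu)^\downarrow_k$ converging to $(\gamma_k,\nu)$ in the space of \sqg s on $\F_k$; in particular $\rho_n\to\gamma_k$ and $\nu_n\to\nu$. Writing $d_n=d_\infty^{[k]}(\rho_n,\gamma_k)$, we have $d_n\to0$, so for all sufficiently large $n$ the smallness hypothesis $d_n\le\frac1{4k^2}$ of \Cref{lem:zmenaPrsti} is met. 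For such $n$ the lemma produces a finite measure $\nu'_n$ on $\F_k^2$ with $(\gamma_k,\nu'_n)\in(\pi,\mu)^\downarrow_k$ (so $\nu'_n\in A$) and $d_\infty^{[k]^2}(\nu_n,\nu'_n)\le 4(1+2\mu(X^2))k(k-1)d_n$. As the right-hand side tends to $0$, we obtain $d_\infty^{[k]^2}(\nu_n,\nu'_n)\to0$ and hence $\nu'_n\to\nu$. Thus $\nu$ is a limit of points of $A$, that is, $\nu\in\overline A$.

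Combining the two inclusions yields $\overline A=B$, which is the claim. The only genuine difficulty has been absorbed into \Cref{lem:zmenaPrsti}: the fact that a witnessing morphism can be corrected so that its vertex distribution becomes exactly $\gamma_k$, while the resulting edge measure is perturbed by an amount controlled linearly by the initial discrepancy $d$. Everything else is a routine continuity-and-closedness argument, and the constraint $d\le\frac1{4k^2}$ poses no obstacle, since it holds for all large $n$ along the approximating sequence.
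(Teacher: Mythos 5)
Your proposal is correct and follows essentially the same route as the paper: the easy inclusion is the same closedness/continuity observation, and the substantive inclusion applies \Cref{lem:zmenaPrsti} to an approximating sequence $(\rho_n,\nu_n)\in(\pi,\mu)^\downarrow_k$ exactly as the paper does. No gaps.
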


\begin{proof}
Suppose first that $\nu\in\widetilde S_k(\pi,\mu)$.
By definition, that means that there is a sequence $(\nu_n)_{n=1}^\infty$ of measures on $\F_k^2$, with $(\gamma_k,\nu_n)\in(\pi,\mu)^\downarrow_k$, $n\in\N$, which converges to $\nu$.
Consequently, the sequence $(\gamma_k,\nu_n)_{n=1}^\infty$ of \sqg s converges to $(\gamma_k,\nu)$, and so $(\gamma_k,\nu)\in\S_k(\pi,\mu)$.
This proves the inclusion `$\subseteq$'.

Now suppose that $\nu$ is a measure on $\F_k^2$ such that $(\gamma_k,\nu)\in\S_k(\pi,\mu)$.
Then there is a sequence $(\rho_n,\nu_n)_{n=1}^\infty$ of \sqg s, with $(\rho_n,\nu_n)\in(\pi,\mu)^\downarrow_k$, $n\in\N$, which converges to $(\gamma_k,\nu)$.
This means that
\begin{equation}
\label{eq:to0a}
d_\infty^{[k]}(\rho_n,\gamma_k)\to0,\quad n\to\infty
\end{equation}
and
\begin{equation}
\label{eq:to0ab}
d_\infty^{[k]^2}(\nu_n,\nu)\to0,\quad n\to\infty.
\end{equation}
We may assume that $d_\infty^{[k]}(\rho_n,\gamma_k)\le\frac1{4k^2}$, $n\in\N$.
Then, for every $n\in\N$, we can apply \Cref{lem:zmenaPrsti} to find a finite measure $\nu_n'$ on $\F_k^2$ such that
$(\gamma_k,\nu_n')\in(\pi,\mu)^\downarrow_k$ and such that
\begin{equation}
\label{eq:to0b}
d_\infty^{[k]^2}(\nu_n,\nu_n')\le4(1+2\mu(X^2))k(k-1)d_\infty^{[k]}(\rho_n,\gamma_k).
\end{equation}
By~\eqref{eq:to0a} and~\eqref{eq:to0b}, it follows that
\[
d_\infty^{[k]^2}(\nu_n,\nu_n')\to0,\quad n\to\infty.
\]
Consequently, by~\eqref{eq:to0ab}, we also have
\[
d_\infty^{[k]^2}(\nu_n',\nu) \to0,\quad n\to\infty.
\]
It follows that $\nu\in\widetilde S_k(\pi,\mu)$, which completes the proof of the inclusion `$\supseteq$'.
\end{proof}

For any non-empty finite set $F$, let $d_H^F$ be the Hausdorff distance on the hyperspace of all non-empty compact subsets of $\R^F$, arising from the maximum metric $d_\infty^F$ on $\R^F$.

\begin{corollary}
\label{cor:spojitost}
Let $(\pi_X,\mu_X)$ and $(\pi_Y,\mu_Y)$ be \sqg s on measurable
spaces $(X,\A)$ and $(Y,\B)$, respectively.
Let $k\in\N$ and suppose that
\begin{equation}
\label{eq:nutno}
d_H^{[k]\cup[k]^2}\big(\S_k(\pi_X,\mu_X),\S_k(\pi_Y,\mu_Y)\big)<\frac1{4k^2}.
\end{equation}
Let
\[U=\max\big\{\mu_X(X^2),\mu_Y(Y^2)\big\}.\]
Then
\begin{multline*}
d_H^{[k]^2}\big(\widetilde\S_k(\pi_X,\mu_X),\widetilde\S_k(\pi_Y,\mu_Y)\big)\\
\le\Big(1+(1+2U)4k(k-1)\Big)d_H^{[k]\cup[k]^2}\big(\S_k(\pi_X,\mu_X),\S_k(\pi_Y,\mu_Y)\big).
\end{multline*}
\end{corollary}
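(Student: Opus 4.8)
The plan is to establish the two one-sided Hausdorff estimates separately; since the hypothesis \eqref{eq:nutno}, the quantity $U$, and the conclusion are all symmetric in $(\pi_X,\mu_X)$ and $(\pi_Y,\mu_Y)$, it suffices to show that every $\nu\in\widetilde\S_k(\pi_X,\mu_X)$ admits some $\nu''\in\widetilde\S_k(\pi_Y,\mu_Y)$ with $d_\infty^{[k]^2}(\nu,\nu'')\le\big(1+(1+2U)4k(k-1)\big)D$, where $D$ denotes the left-hand side of \eqref{eq:nutno}. Throughout I would freely invoke \Cref{cor:malyVelkyShapeEasy} to identify each reduced $k$-shape $\widetilde\S_k(\pi,\mu)$ with the slice $\{\nu:(\gamma_k,\nu)\in\S_k(\pi,\mu)\}$ of the full $k$-shape.

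First I would fix $\nu\in\widetilde\S_k(\pi_X,\mu_X)$, so that $(\gamma_k,\nu)\in\S_k(\pi_X,\mu_X)$. As the two full $k$-shapes are at Hausdorff distance $D<\tfrac1{4k^2}$ and $\S_k(\pi_Y,\mu_Y)$ is compact, there is a \sqg\ $(\rho,\tilde\nu)\in\S_k(\pi_Y,\mu_Y)$ within $d_\infty^{[k]\cup[k]^2}$-distance $D$ of $(\gamma_k,\nu)$; in coordinates,
\[
d_\infty^{[k]}(\gamma_k,\rho)\le D\quad\text{and}\quad d_\infty^{[k]^2}(\nu,\tilde\nu)\le D .
\]
This point lies in the correct full $k$-shape but need not sit on the slice $\rho=\gamma_k$, and the remaining task is to slide it back onto that slice while controlling the edge part; this is where the Lipschitz constant originates.

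The crux is therefore a closure version of \Cref{lem:zmenaPrsti}: for any $(\rho,\tilde\nu)\in\S_k(\pi_Y,\mu_Y)$ with $d:=d_\infty^{[k]}(\rho,\gamma_k)\le\tfrac1{4k^2}$ there is $\nu''$ with $(\gamma_k,\nu'')\in\S_k(\pi_Y,\mu_Y)$ and $d_\infty^{[k]^2}(\tilde\nu,\nu'')\le4(1+2\mu_Y(Y^2))k(k-1)d$. I would prove this by approximation: choose $(\rho_n,\tilde\nu_n)\in(\pi_Y,\mu_Y)^\downarrow_k$ with $(\rho_n,\tilde\nu_n)\to(\rho,\tilde\nu)$, so that $d_\infty^{[k]}(\rho_n,\gamma_k)\le\tfrac1{4k^2}$ for all large $n$, and apply \Cref{lem:zmenaPrsti} to each to obtain $\nu_n'$ with $(\gamma_k,\nu_n')\in(\pi_Y,\mu_Y)^\downarrow_k$ and $d_\infty^{[k]^2}(\tilde\nu_n,\nu_n')\le4(1+2\mu_Y(Y^2))k(k-1)d_\infty^{[k]}(\rho_n,\gamma_k)$. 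By \Cref{lem:zachovaniMiry} every $\nu_n'$ has total mass $\mu_Y(Y^2)$, so the $\nu_n'$ lie in a fixed compact subset of $\R^{[k]^2}$; passing to a convergent subsequence $\nu_n'\to\nu''$ yields $(\gamma_k,\nu'')\in\S_k(\pi_Y,\mu_Y)$, and letting $n\to\infty$ in the estimate (using $d_\infty^{[k]}(\rho_n,\gamma_k)\to d$) gives the claimed bound.

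Finally I would combine the pieces. Applying the closure version to $(\rho,\tilde\nu)$ (legitimate as $d\le D<\tfrac1{4k^2}$) produces $\nu''$ with $(\gamma_k,\nu'')\in\S_k(\pi_Y,\mu_Y)$, that is $\nu''\in\widetilde\S_k(\pi_Y,\mu_Y)$ by \Cref{cor:malyVelkyShapeEasy}, and the triangle inequality gives
\[
d_\infty^{[k]^2}(\nu,\nu'')\le d_\infty^{[k]^2}(\nu,\tilde\nu)+d_\infty^{[k]^2}(\tilde\nu,\nu'')\le D+4(1+2U)k(k-1)d\le\big(1+(1+2U)4k(k-1)\big)D,
\]
using $\mu_Y(Y^2)\le U$ and $d\le D$. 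Swapping $X$ and $Y$ yields the reverse estimate, and the two together bound $d_H^{[k]^2}\big(\widetilde\S_k(\pi_X,\mu_X),\widetilde\S_k(\pi_Y,\mu_Y)\big)$ as required. The main obstacle is precisely the closure version of \Cref{lem:zmenaPrsti}, since that lemma is stated only for the non-closed set $(\pi,\mu)^\downarrow_k$; the compactness furnished by the fixed total mass $\mu_Y(Y^2)$ (via \Cref{lem:zachovaniMiry}) is what makes the limiting argument legitimate.
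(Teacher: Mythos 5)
Your proof is correct and follows essentially the same route as the paper: reduce by symmetry to a one-sided estimate, use \Cref{cor:malyVelkyShapeEasy} to pass to the full $k$-shape, find a nearby point over $(\pi_Y,\mu_Y)$, repair its vertex marginal via \Cref{lem:zmenaPrsti}, and conclude by the triangle inequality. The only difference is technical: where you upgrade \Cref{lem:zmenaPrsti} to a ``closure version'' by a compactness/subsequence argument, the paper instead picks the nearby point directly in the dense subset $(\pi_Y,\mu_Y)^\downarrow_k$ at distance at most $D+\varepsilon$ and lets $\varepsilon\to0$ at the end; both devices are legitimate.
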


\begin{proof}
By symmetry, it is enough to show that, for every $\nu^1\in\widetilde\S_k(\pi_X,\mu_X)$ and every (small enough) $\varepsilon>0$, there is $\nu^2\in\widetilde\S_k(\pi_Y,\mu_Y)$ such that
\begin{equation*}
d_\infty^{[k]^2}(\nu^1,\nu^2)
\le\Big(1+(1+2U)4k(k-1)\Big)\Big(d_H^{[k]\cup[k]^2}\big(\S_k(\pi_X,\mu_X),\S_k(\pi_Y,\mu_Y)\big)+\varepsilon\Big).
\end{equation*}
So we fix $\nu^1\in\widetilde\S_k(\pi_X,\mu_X)$ and $\varepsilon>0$ such that
\begin{equation}
\label{eq:epsilonMezi}
\varepsilon<\frac1{4k^2}-d_H^{[k]\cup[k]^2}\big(\S_k(\pi_X,\mu_X),\S_k(\pi_Y,\mu_Y)\big)
\end{equation}
(which is possible by~\eqref{eq:nutno}).
By \Cref{cor:malyVelkyShapeEasy}, it holds that $(\gamma_k,\nu^1)\in\S_k(\pi_X,\mu_X)$.
As $(\pi_Y,\mu_Y)^\downarrow_k$ is a dense subset of $\S_k(\pi_Y,\mu_Y)$, we can find $(\overline\rho,\overline\nu)\in(\pi_Y,\mu_Y)^\downarrow_k$ such that
\begin{equation}
\label{eq:distAB}
d_\infty^{[k]\cup[k]^2}\big((\gamma_k,\nu^1),(\overline\rho,\overline\nu)\big)\le d_H^{[k]\cup[k]^2}\big(\S_k(\pi_X,\mu_X),\S_k(\pi_Y,\mu_Y)\big)+\varepsilon.
\end{equation}
Then, in particular, we have
\[
d_\infty^{[k]}(\overline\rho,\gamma_k)\stackrel{\eqref{eq:distAB}}{\le}d_H^{[k]\cup[k]^2}\big(\S_k(\pi_X,\mu_X),\S_k(\pi_Y,\mu_Y)\big)+\varepsilon\stackrel{\eqref{eq:epsilonMezi}}{<}\frac1{4k^2}.
\]
By \Cref{lem:zmenaPrsti}, there is a finite measure $\nu^2$ on $\F_k^2$ such that $(\gamma_k,\nu^2)\in(\pi_Y,\mu_Y)^\downarrow_k$ and such that
\begin{equation}
\label{eq:distB}
d_\infty^{[k]^2}(\overline\nu,\nu^2)\le4(1+2\mu_Y(Y^2))k(k-1)d_\infty^{[k]}(\overline\rho,\gamma_k).
\end{equation}
Then
\begin{equation}
\label{eq:distBB}
d_\infty^{[k]^2}(\overline\nu,\nu^2)\stackrel{\eqref{eq:distAB},\eqref{eq:distB}}{\le}4(1+2\mu_Y(Y^2))k(k-1)\Big(d_H^{[k]\cup[k]^2}\big(\S_k(\pi_X,\mu_X),\S_k(\pi_Y,\mu_Y)\big)+\varepsilon\Big).
\end{equation}
Finally, we have
\begin{multline*}
d_\infty^{[k]^2}(\nu^1,\nu^2)\le d_\infty^{[k]^2}(\nu^1,\overline\nu)+d_\infty^{[k]^2}(\overline\nu,\nu^2)\\
\stackrel{\eqref{eq:distAB},\eqref{eq:distBB}}{\le}\Big(1+4(1+2\mu_Y(Y^2))k(k-1)\Big)\Big(d_H^{[k]\cup[k]^2}\big(\S_k(\pi_X,\mu_X),\S_k(\pi_Y,\mu_Y)\big)+\varepsilon\Big),
\end{multline*}
and the conclusion follows.
\end{proof}

\begin{proposition}
\label{prop:s-shapy}
Let $(\pi_X,\mu_X)$ and $(\pi_Y,\mu_Y)$ be \sqg s on measurable
spaces $(X,\A)$ and $(Y,\B)$, respectively.
Then the following conditions are equivalent:
\begin{itemize}
\item[\textnormal{(i)}] $\widetilde\S_k(\pi_X,\mu_X)=\widetilde\S_k(\pi_Y,\mu_Y),\quad k\in\N,$
\item[\textnormal{(ii)}] $\S_k(\pi_X,\mu_X)=\S_k(\pi_Y,\mu_Y),\quad k\in\N.$
\end{itemize}
\end{proposition}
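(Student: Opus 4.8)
The plan is to prove the two implications separately, getting $(\mathrm{ii})\Rightarrow(\mathrm{i})$ for free from \Cref{cor:malyVelkyShapeEasy} and reducing $(\mathrm{i})\Rightarrow(\mathrm{ii})$ to a blow-up argument. For $(\mathrm{ii})\Rightarrow(\mathrm{i})$ there is essentially nothing to do: \Cref{cor:malyVelkyShapeEasy} identifies $\widetilde\S_k(\pi,\mu)$ with the slice $\{\nu:(\gamma_k,\nu)\in\S_k(\pi,\mu)\}$, so an equality $\S_k(\pi_X,\mu_X)=\S_k(\pi_Y,\mu_Y)$ passes verbatim to an equality $\widetilde\S_k(\pi_X,\mu_X)=\widetilde\S_k(\pi_Y,\mu_Y)$.

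The content lies in $(\mathrm{i})\Rightarrow(\mathrm{ii})$. By symmetry, and since each $\S_k(\pi_Y,\mu_Y)$ is closed while $\S_k(\pi_X,\mu_X)=\overline{(\pi_X,\mu_X)^\downarrow_k}$, it suffices to fix $k$ and show that every $(\rho,\nu)\in(\pi_X,\mu_X)^\downarrow_k$ lies in $\S_k(\pi_Y,\mu_Y)$. I would first reduce to the case of rational $\rho$, say $\rho(\{i\})=n_i/k'$ with $\sum_{i\in[k]} n_i=k'$. Indeed, if $\kappa$ is a morphism witnessing $(\rho,\nu)\in(\pi_X,\mu_X)^\downarrow_k$ and $\rho'$ is a rational probability vector with $d_\infty^{[k]}(\rho,\rho')$ small, then an argument entirely analogous to the proof of \Cref{lem:zmenaPrsti} (run with target $\rho'$ in place of $\gamma_k$) produces a morphism $\kappa'$ with $\kappa'_*\pi_X=\rho'$ and $\|\kappa(\cdot,\{m\})-\kappa'(\cdot,\{m\})\|_{L^1(\gamma)}$ small, where $\gamma=\pi_X+P^1_*\mu_X+P^2_*\mu_X$; by \Cref{lem:malaZmenaMorphismu} the \sqg\ $(\rho',(\kappa')^{\otimes2}_*\mu_X)\in(\pi_X,\mu_X)^\downarrow_k$ is then close to $(\rho,\nu)$. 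Letting $\rho'\to\rho$ and using closedness of $\S_k(\pi_Y,\mu_Y)$, it is enough to treat rational $\rho$.

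For rational $\rho$ I would use a blow-up. Fix a surjection $f\colon[k']\to[k]$ with $|f^{-1}(\{i\})|=n_i$ and let $\sigma$ be the splitting Markov kernel from $\F_k$ to $\F_{k'}$ of \Cref{ex:trivialMorphisms} (the uniform distribution on each fibre $f^{-1}(\{i\})$). A direct computation gives $\sigma_*\rho=\gamma_{k'}$, so by \Cref{ex:newGraphons} $\sigma$ is a morphism from $(\rho,\nu)$ to $(\gamma_{k'},\nu^*)$, where $\nu^*=\sigma^{\otimes2}_*\nu$; composing with $\kappa$ shows $(\gamma_{k'},\nu^*)\in(\pi_X,\mu_X)^\downarrow_{k'}$, hence $\nu^*\in\widetilde\S_{k'}(\pi_X,\mu_X)$. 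The hypothesis $(\mathrm{i})$ now gives $\nu^*\in\widetilde\S_{k'}(\pi_Y,\mu_Y)$, so there are measures $\nu^*_n$ with $(\gamma_{k'},\nu^*_n)\in(\pi_Y,\mu_Y)^\downarrow_{k'}$ and $\nu^*_n\to\nu^*$. Pushing these forward along $f$ (a morphism by \Cref{ex:deterministicMorphism}) and composing with the witnessing morphisms yields $(f_*\gamma_{k'},f^{\otimes2}_*\nu^*_n)=(\rho,f^{\otimes2}_*\nu^*_n)\in(\pi_Y,\mu_Y)^\downarrow_k\subseteq\S_k(\pi_Y,\mu_Y)$. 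Since $\kappa_f\circ\sigma=\kappa_{i_{[k]}}$, \Cref{l:otimes} together with \Cref{cor:slozeni} give $f^{\otimes2}_*\nu^*=\nu$, so $(\rho,f^{\otimes2}_*\nu^*_n)\to(\rho,\nu)$; closedness of $\S_k(\pi_Y,\mu_Y)$ then puts $(\rho,\nu)$ there, as desired.

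The main obstacle is the rational reduction: one must re-run the mass-transport scheme of \Cref{lem:zmenaPrsti} with an arbitrary rational target distribution while keeping the $L^1(\gamma)$-perturbation of the kernel (and hence, via \Cref{lem:malaZmenaMorphismu}, of $\nu$) under control. Once $\rho$ is rational, the blow-up and recovery form an exact round trip, which is precisely the situation packaged in \Cref{ex:trivialMorphisms}, and the remainder reduces to continuity of the finite-dimensional pushforward $\nu^*_n\mapsto f^{\otimes2}_*\nu^*_n$ and the closedness of the target $k$-shape.
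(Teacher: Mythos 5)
Your overall strategy is the right one and, up to the direction of the argument, it is the paper's: the implication (ii)$\Rightarrow$(i) is \Cref{cor:malyVelkyShapeEasy}, and the content of (i)$\Rightarrow$(ii) is handled by perturbing the vertex distribution to a rational one and then blowing up along a splitting kernel as in \Cref{ex:trivialMorphisms} so that the vertex distribution becomes uniform. (The paper argues by contraposition --- it picks $(\rho,\nu)\in(\pi_X,\mu_X)^\downarrow_k\setminus\S_k(\pi_Y,\mu_Y)$ and produces an element of $\widetilde\S_s(\pi_X,\mu_X)\setminus\widetilde\S_s(\pi_Y,\mu_Y)$ --- while you argue directly, but the blow‑up, the identity $\kappa_f\circ\sigma=\kappa_{i_{[k]}}$, and the continuity of $\phi_f$ via \Cref{lem:malaZmenaGraphonu} are exactly the same ingredients.)

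There is, however, a genuine gap in your rational-reduction step. You assert that the mass-transport scheme of \Cref{lem:zmenaPrsti} runs ``entirely analogously'' with an arbitrary rational target $\rho'$ in place of $\gamma_k$. It does not: the key estimate~\eqref{eq:zdola} bounds $\pi\big(\{x:\kappa^j(x,\{m_1\})>\beta\}\big)$ from below by $(\text{target value})-\beta$, and then needs $\beta\cdot\big((\text{target value})-\beta\big)\ge d$ to remove $d$ worth of mass. With target value $\tfrac1k$ this gives $\beta_0<2dk$ and hence an $L^1(\gamma)$ perturbation of order $dk$; with a target value $t$ that is much smaller than $d$ (which can certainly happen for coordinates where $\rho$ is tiny or zero), the required threshold $\beta_0$ can be of order $d/t$, i.e.\ close to $1$, and the control on $\|\kappa(\cdot,\{m\})-\kappa'(\cdot,\{m\})\|_{L^1(\gamma)}$ --- and with it the applicability of \Cref{lem:malaZmenaMorphismu} --- is lost. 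A second, smaller issue: if $\rho(\{i\})=0$ and you take $n_i=0$, the fibre $f^{-1}(\{i\})$ is empty and the splitting kernel $\sigma$ is undefined at $i$; you must ensure all coordinates of $\rho'$ are strictly positive. Both problems are repaired by replacing the thresholding scheme with the rescaling used in the paper's proof: fix $m_0$ with $\rho(\{m_0\})>0$ and set $\kappa'(x,\{m\})=\kappa(x,\{m\})+c_m\kappa(x,\{m_0\})$ for $m\ne m_0$ (and the complementary value at $m_0$), with small constants $c_m>0$ chosen so that $\rho(\{m\})+c_m\rho(\{m_0\})$ is rational. This makes every coordinate of $\rho'$ a positive rational, and the pointwise (hence $L^1(\gamma)$) change of the kernel is at most $\max_m c_m$ up to a factor $1+2\mu_X(X^2)$, so \Cref{lem:malaZmenaMorphismu} applies. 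With that substitution your argument goes through.
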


\begin{proof}
The implication (ii)$\implies$(i) follows by \Cref{cor:malyVelkyShapeEasy}.

To prove the opposite implication, suppose that there is $k\in\N$ such that $\S_k(\pi_X,\mu_X)\neq\S_k(\pi_Y,\mu_Y)$;
let us say that $\S_k(\pi_X,\mu_X)\setminus\S_k(\pi_Y,\mu_Y)\neq\emptyset$.
As $(\pi_X,\mu_X)^\downarrow_k$ is a dense subset of $\S_k(\pi_X,\mu_X)$ and the $k$-shape $\S_k(\pi_Y,\mu_Y)$ is a closed set, we can fix some $(\rho,\nu)\in(\pi_X,\mu_X)^\downarrow_k\setminus\S_k(\pi_Y,\mu_Y)$, together with a morphism $\kappa$ from $(\pi_X,\mu_X)$ to $(\rho,\nu)$.

Next, we construct $(\rho',\nu')\in(\pi_X,\mu_X)^\downarrow_k\setminus\S_k(\pi_Y,\mu_Y)$ such that all values of the probability measure $\rho'$ are positive rational numbers.
This can be achieved by an application of \Cref{lem:malaZmenaMorphismu} as follows.
We fix $\eta\in(0,1)$.
We also fix some $m_0\in[k]$ with $\rho(\{m_0\})>0$.
Further, for every $m\in[k]\setminus\{m_0\}$, we fix some
\[c_m\in\Big(0,\frac{\eta}{k(1+2\mu_X(X^2))}\Big)\]
(then, in particular, $\sum_{m\in[k]\setminus\{m_0\}}c_m<\eta<1$) such that
\begin{equation}
\label{eq:jednaZmena}
\rho(\{m\})+c_m\rho(\{m_0\})\in\Q,\quad m\in[k]\setminus\{m_0\}.
\end{equation}
Then we define a Markov kernel $\kappa'$ from $(X,\A)$ to $\F_k$ by
\begin{multline}
\label{predefKernel}
\kappa'(x,\{m\})=
\begin{cases}
\kappa(x,\{m\})+c_m\kappa(x,\{m_0\}),&m\in[k]\setminus\{m_0\},\\
\big(1-\sum_{m\in[k]\setminus\{m_0\}}c_m\big)\kappa(x,\{m_0\}),&m=m_0,
\end{cases}
\\
x\in X.
\end{multline}
Then it is easy to see that, if $\gamma=\pi+P^1_*\mu_X+P^2_*\mu_X$ (where $P^j\colon X^2\to X$ is the projection on the $j$th coordinate, $j=1,2$), then
\begin{equation*}
\big\|\kappa(\cdot,\{m\})-\kappa'(\cdot,\{m\})\big\|_{L^1(\gamma)}<\eta(1+2\mu_X(X^2)),\quad m\in[k].
\end{equation*}
So, by \Cref{lem:malaZmenaMorphismu}, if we define
\[\rho'=\kappa'_*\pi\quad\text{and}\quad\nu'=(\kappa')^{\otimes2}_*\mu\]
for $\eta$ sufficiently close to $0$, then $(\rho',\nu')\in(\pi,\mu)^\downarrow_k$ does not belong to the closed set $\S_k(\pi_Y,\mu_Y)$.
Also, for every $m\in[k]\setminus\{m_0\}$, we have
\begin{equation*}
\begin{split}
\rho'(\{m\})=&\kappa'_*\pi(\{m\})\\
=&\int_X\kappa'(x,\{m\})\,d\pi(x)\\
\stackrel{\eqref{predefKernel}}{=}&\int_X\kappa(x,\{m\})\,d\pi(x)+c_m\int_X\kappa(x,\{m_0\})\,d\pi(x)\\
=&\rho(\{m\})+c_m\rho(\{m_0\})\stackrel{\eqref{eq:jednaZmena}}{\in}\Q,
\end{split}
\end{equation*}
and so
\begin{equation*}
\begin{split}
\rho'(\{m_0\})=&1-\sum_{m\in[k]\setminus\{m_0\}}\rho'(\{m\})\in\Q.
\end{split}
\end{equation*}
Also, by the definition of $\kappa'$, it is easy to see that all values of $\rho'=\kappa'_*\pi$ are positive.
So the \sqg\ $(\rho',\nu')$ has all the required properties.

As all values of $\rho'$ are positive rational numbers, there are $s\in\N$ and $r_1,\ldots,r_k\in\N$ such that
\[
\rho'(\{m\})=\frac{r_m}s,\quad m\in[k].
\]
We fix a map $f\colon[s]\to[k]$ such that $|f^{-1}(\{m\})|=r_m$, $m\in[k]$.
Let $\overline\kappa$ be the Markov kernel from $\F_k$ to $\F_s$ given by
\[
\overline\kappa(m,F)=\frac1{r_m}\big|F\cap f^{-1}(\{m\})\big|,\quad m\in[k],F\subseteq[s].
\]
We define a \sqg\ $(\rho'',\nu'')$ on $\F_s$ by
\[\rho''=\overline\kappa_*\rho'\quad\text{and}\quad\nu''=\overline\kappa^{\otimes2}_*\nu'.\]
Then, for every $F\subseteq[s]$, it holds that
\begin{equation*}
\begin{split}
\rho''(F)=&\int_{[k]}\overline\kappa(m,F)\,d\rho'(m)\\
=&\frac1s\sum_{m\in[k]}\big|F\cap f^{-1}(\{m\})\big|\\
=&\frac 1s|F|,
\end{split}
\end{equation*}
and so $\rho''=\gamma_s$.
Thus
\[(\gamma_s,\nu'')=(\rho'',\nu'')\preceq(\rho',\nu')\preceq(\pi_X,\mu_X),\]
and so, in particular, $\nu''\in\widetilde\S_s(\pi_X,\mu_X)$.
We will show that $(\gamma_s,\nu'')\notin\S_s(\pi_Y,\mu_Y)$; that will imply that $\nu''\notin\widetilde\S_s(\pi_Y,\mu_Y)$, and so $\widetilde\S_s(\pi_X,\mu_X)\neq\widetilde\S_s(\pi_Y,\mu_Y)$, which will complete the proof.

By \Cref{ex:trivialMorphisms}, the Markov kernel $\kappa_f$ is a morphism from $(\gamma_s,\nu'')$ to $(\rho',\nu')$.
By \Cref{lem:malaZmenaGraphonu}, the map $\phi_f$ from the space of all \sqg s on $\F_s$ to the space of all \sqg s on $\F_k$ given by
\[
\phi_f(\tilde\rho,\tilde\nu)=((\kappa_f)_*\tilde\rho,(\kappa_f)^{\otimes2}_*\tilde\nu)
\]
is continuous.
Obviously, it holds
\[
\phi_f\big((\pi_Y,\mu_Y)^\downarrow_s\big)\subseteq(\pi_Y,\mu_Y)^\downarrow_k.
\]
By continuity, it follows that
\[
\phi_f\big(\S_s(\pi_Y,\mu_Y)\big)\subseteq\S_k(\pi_Y,\mu_Y).
\]
So, as
\[\phi_f(\gamma_s,\nu'')=(\rho',\nu')\notin\S_k(\pi_Y,\mu_Y),\]
we must have $(\gamma_s,\nu'')\notin\S_s(\pi_Y,\mu_Y)$, as we needed.
\end{proof}

Let $\sim$ be the equivalence relation on the class of all \sqg s given by
\[
(\pi_1,\mu_1)\sim(\pi_2,\mu_2)\quad\Leftrightarrow\quad\forall k\in\N\ \big(\S_k(\pi_1,\mu_1)=\S_k(\pi_2,\mu_2)\big).
\]
By \Cref{prop:s-shapy}, the relation $\sim$ can be equivalently described by
\[
(\pi_1,\mu_1)\sim(\pi_2,\mu_2)\quad\Leftrightarrow\quad\forall k\in\N\ \big(\widetilde\S_k(\pi_1,\mu_1)=\widetilde\S_k(\pi_2,\mu_2)\big).
\]
For every \sqg\ $(\pi,\mu)$, let $[(\pi,\mu)]$ denote the $\sim$-equivalence class of $(\pi,\mu)$.
Let $\mathcal G$ be the space of all $\sim$-equivalence classes.
Let us consider the following two embeddings of $\mathcal G$ into the product of hyperspaces of compact sets:
\[[(\pi,\mu)]\mapsto(\S_k(\pi,\mu))_{k\in\N}\quad\text{and}\quad[(\pi,\mu)]\mapsto(\widetilde\S_k(\pi,\mu))_{k\in\N}.\]
Each of the hyperspaces is equipped with the corresponding Vietoris topology, and their products are equipped with the product topologies.
These embeddings provide two topologies on the space $\mathcal G$: the topology of convergence of $k$-shapes $\S_k(\cdot)$ and the topology of convergence of $k$-shapes $\widetilde\S_k(\cdot)$, respectively.
The following theorem states that these two topologies on the space $\mathcal G$ coincide.

\begin{theorem}
\label{thm:2topologie}
Let $(\pi_n,\mu_n)$, $n\in\N$, and $(\pi,\mu)$ be \sqg s on measurable spaces $(X_n,\A_n)$, $n\in\N$, and $(X,\A)$, respectively.
Then the following conditions are equivalent:
\begin{itemize}
\item[\textnormal{(a)}] for every $k\in\N$, the sequence $\big(\S_k(\pi_n,\mu_n)\big)_{n=1}^\infty$ is convergent (and its limit is $\S_k(\pi,\mu)$),
\item[\textnormal{(b)}] for every $k\in\N$, the sequence $\big(\widetilde\S_k(\pi_n,\mu_n)\big)_{n=1}^\infty$ is convergent (and its limit is $\widetilde\S_k(\pi,\mu)$).
\end{itemize}
\end{theorem}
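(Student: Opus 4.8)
The plan is to prove the two implications separately. Throughout I use that, on the family of nonempty compact subsets of $\R^F$ (for a finite set $F$), the Vietoris topology is metrized by the Hausdorff distance $d_H^F$ arising from $d_\infty^F$; hence both (a) and (b) are assertions about convergence in the relevant Hausdorff metrics, with the prescribed limits being the shapes of $(\pi,\mu)$.

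First I would dispatch (a)$\Rightarrow$(b), which is essentially immediate from \Cref{cor:spojitost}. Note that $\S_1(\pi_n,\mu_n)$ is the single point recording the total edge mass $\mu_n(X_n^2)$ (there being only one probability measure on $\F_1$), so (a) at $k=1$ forces $\mu_n(X_n^2)\to\mu(X^2)$; in particular $U_n:=\max\{\mu_n(X_n^2),\mu(X^2)\}$ is bounded. Now fix $k$. By (a), $d_H^{[k]\cup[k]^2}\big(\S_k(\pi_n,\mu_n),\S_k(\pi,\mu)\big)<\tfrac1{4k^2}$ for all large $n$, so \Cref{cor:spojitost} applies and bounds $d_H^{[k]^2}\big(\widetilde\S_k(\pi_n,\mu_n),\widetilde\S_k(\pi,\mu)\big)$ by $\big(1+(1+2U_n)4k(k-1)\big)$ times a quantity tending to $0$. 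Since the factor is bounded, this yields $\widetilde\S_k(\pi_n,\mu_n)\to\widetilde\S_k(\pi,\mu)$, which is (b).

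The substantial direction is (b)$\Rightarrow$(a), and here the plan is to avoid a direct quantitative estimate and instead combine the easy direction with the existence theorem. Examining $\widetilde\S_1$ as above shows that (b) again forces $\mu_n(X_n^2)\to\mu(X^2)$; since every element of every shape $\S_k(\pi_n,\mu_n)$ has total edge mass exactly $\mu_n(X_n^2)$ (by \Cref{lem:zachovaniMiry} and closure), all these sets lie in a fixed compact region $G_k$ of the space of \sqg s on $\F_k$ (probability vertex weights, total edge mass $\le M:=\sup_n\mu_n(X_n^2)$). Consequently the sequence $n\mapsto\big(\S_k(\pi_n,\mu_n)\big)_{k\in\N}$ ranges in a compact subset of $\prod_{k\in\N}\K_k$, so by the usual subsequence criterion it suffices to show that every subsequence has a further subsequence along which $\S_k(\pi_{n_j},\mu_{n_j})\to\S_k(\pi,\mu)$ for all $k$. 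Diagonalizing over $k$, pass to a subsequence along which $\S_k(\pi_{n_j},\mu_{n_j})$ converges to some compact $T_k$ for every $k$; this subsequence is then convergent in the sense of \Cref{def:convergence}, so by \Cref{thm:existence} there is a \sqg\ $(\pi',\mu')$ with $\S_k(\pi',\mu')=T_k$ for all $k$. Applying the already established implication (a)$\Rightarrow$(b) to this subsequence gives $\widetilde\S_k(\pi_{n_j},\mu_{n_j})\to\widetilde\S_k(\pi',\mu')$, while (b) for the original sequence gives $\widetilde\S_k(\pi_{n_j},\mu_{n_j})\to\widetilde\S_k(\pi,\mu)$; by uniqueness of Hausdorff limits, $\widetilde\S_k(\pi',\mu')=\widetilde\S_k(\pi,\mu)$ for every $k$. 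Finally \Cref{prop:s-shapy} upgrades this to $\S_k(\pi',\mu')=\S_k(\pi,\mu)$, i.e. $T_k=\S_k(\pi,\mu)$, as required.

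I expect the main obstacle to be precisely this reverse direction: passing to the uniform slice in $\widetilde\S_k$ discards the vertex distribution, and the naive attempt to recover an element of $\S_k(\pi,\mu)$ with an irrational vertex weight by rational approximation and blow-up fails, because matching a real vector to accuracy $\varepsilon$ needs denominators $s\sim1/\varepsilon$ while the error amplification in \Cref{lem:zmenaPrsti} scales like $s^2$, so the estimate does not close. The compactness-plus-\Cref{thm:existence} argument sidesteps this by realizing each subsequential limit $(T_k)_k$ as the shapes of a genuine \sqg\ and then identifying it with $(\pi,\mu)$ through \Cref{prop:s-shapy}, rather than reconstructing individual points. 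The only routine verifications are that total edge mass is continuous and constant on each shape (giving the compact enclosure $G_k$) and that Vietoris convergence of shapes coincides with Hausdorff convergence inside $G_k$.
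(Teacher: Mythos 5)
Your proof is correct and follows essentially the same route as the paper: \Cref{cor:spojitost} for (a)$\Rightarrow$(b), and for the converse the uniform edge-mass bound, compactness via \Cref{thm:existence}, and identification of the limit via \Cref{prop:s-shapy}. The only cosmetic difference is that the paper packages the reverse direction as ``a continuous bijection from a compact space to a Hausdorff space is a homeomorphism'' on the quotient $\mathcal G_K$, whereas you unwind the same argument into an explicit subsequence extraction.
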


\begin{remark}
By \Cref{def:convergence}, condition \textnormal{(a)} from \Cref{thm:2topologie} can be equivalently reformulated as
\begin{itemize}
\item[\textnormal{(a')}] the sequence $(\pi_n,\mu_n)$ of \sqg s is convergent (and its limit is $(\pi,\mu)$).
\end{itemize}
\end{remark}

\begin{proof}[Proof of \Cref{thm:2topologie}]
If $\sup\{\mu_n(X_n^2):n\in\N\}=\infty$, then neither of the sequences
\[
\big(\S_k(\pi_n,\mu_n)\big)_{n=1}^\infty\quad k\in\N,
\]
and
\[
\big(\widetilde\S_k(\pi_n,\mu_n)\big)_{n=1}^\infty\quad k\in\N,
\]
of compact sets is uniformly bounded (cf. the last paragraph of the proof of \Cref{lem:compactness}).
In particular, neither of these sequences is convergent in the corresponding Vietoris topology, and so neither of the conditions (a) and (b) holds.

So let us assume that 
\[K:=\sup\{\mu_n(X_n^2):n\in\N\}<\infty.\]
Let $\iota$ be the embedding
\[[(\pi',\mu')]\mapsto(\S_k(\pi',\mu'))_{k\in\N}\]
of the space $\mathcal G$ into the product of hyperspaces.
Let $\mathcal G_K\subseteq\mathcal G$ be the space of $\sim$-equivalence classes of all those \sqg s $(\pi',\mu')$ on some measurable space $(X',\A')$ for which $\mu'(X'^2)\le K$.
Then, for every $k\in\N$, the projection of $\iota(\mathcal G_K)$ to the $k$th coordinate contains only (some of) those compact sets which are bounded by $1+K$ in the $d^{[k]\cup[k]^2}_\infty$-norm (again, cf. the last paragraph of the proof of \Cref{lem:compactness}).
That is, the said projection is a subset of the hyperspace
\begin{equation*}
¨\begin{split}
\mathcal L_{k,K}:=\big\{F\subseteq\R^{[k]\cup[k]^2}:F\text{ is non-empty, compact}\\
\text{ and bounded by }1+K\text{ in the }d^{[k]\cup[k]^2}_\infty\text{-norm}\big\},
\end{split}
\end{equation*}
which is compact and metrizable in the Vietoris topology (see eg.~\cite[Theorem~4.26]{Kechris}).
Consequently, $\iota(\mathcal G_K)$ is a subset of the compact metrizable product space $\prod_{k\in\mathbb N}\mathcal L_{k,K}$.

Fix an arbitrary sequence $(\pi_n',\mu_n')_{n=1}^\infty$ with $[(\pi_n',\mu_n')]\in\mathcal G_K$, $n\in\N$.
Then the sequence $(\iota([(\pi_n',\mu_n')]))_{n=1}^\infty$ has a convergent subsequence in $\prod_{k\in\mathbb N}\mathcal L_{k,K}$.
Since $\mathcal L_{k,K}\subseteq\K_k$ for every $k\in\N$, this means that the corresponding subsequence of $(\pi_n',\mu_n')_{n=1}^\infty$ is convergent in the sense of \Cref{def:convergence}.
By \Cref{thm:existence} the subsequence has a limit $(\pi',\mu')$, for which we obviously have that $[(\pi',\mu')]\in\mathcal G_k$.
This shows that the topology of convergence of $k$-shapes $\S_k(\cdot)$ restricted to $\mathcal G_K$ is compact.

Finally, \Cref{cor:spojitost} easily implies that the identity map on $\mathcal G_K$ is continuous from the topology of convergence of $k$-shapes $\S_k(\cdot)$ to the topology of convergence of $k$-shapes $\widetilde\S_k(\cdot)$.
Combining the continuity of the identity map with the compactness of the former topology, both topologies must coincide on $\mathcal G_K$.
The conclusion follows.
\end{proof}

As a corollary, we provide a proof of a variant of~\cite[Theorem~4.5]{KLS} (we just formulate the result for sequences of s-graphons instead of graph sequences).
We recall that an s-graphon is a symmetric Borel probability measure on the square of the unit interval.
Let $\B_{[0,1]}$ be the Borel $\sigma$-algebra on $[0,1]$, and let $\lambda$ be the Lebesgue measure on the measurable space $([0,1],\B_{[0,1]})$.
We further recall that a sequence $(\mu_n)_{n=1}^\infty$ of s-graphons is s-convergent (and its s-limit is an s-graphon $\mu$) if, for every $k\in\N$, the sequence $\big(\widetilde\S_k(\lambda,\mu_n)\big)_{n=1}^\infty$ is convergent in the Vietoris topology (and its limit is $\widetilde\S_k(\lambda,\mu)$).

\begin{corollary}[\cite{KLS}]
Let $(\mu_n)_{n=1}^\infty$ be an s-convergent sequence of s-graphons.
Then there is an s-graphon $\mu$ which is an s-limit of the sequence $(\mu_n)_{n=1}^\infty$.
\end{corollary}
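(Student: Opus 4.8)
The plan is to regard each s-graphon $\mu_n$ as the \sqg\ $(\lambda,\mu_n)$ on $([0,1],\B_{[0,1]})$ and to obtain the desired s-limit as a suitably normalized limit of these \sqg s. First I note that every $\mu_n$ is a probability measure, so $\mu_n([0,1]^2)=1$ for all $n$; in the notation of the proof of \Cref{thm:2topologie} this means that all the classes $[(\lambda,\mu_n)]$ lie in $\mathcal G_1$. As was shown there, on $\mathcal G_1$ the topology of convergence of the $k$-shapes $\S_k(\cdot)$ is compact and metrizable and coincides with the topology of convergence of the $k$-shapes $\widetilde\S_k(\cdot)$.

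Next I would upgrade s-convergence to genuine convergence of \sqg s. By assumption, for each $k\in\N$ the sequence $\big(\widetilde\S_k(\lambda,\mu_n)\big)_{n=1}^\infty$ converges in the Vietoris topology. Since $\mathcal G_1$ is (sequentially) compact, every subsequence of $\big([(\lambda,\mu_n)]\big)_{n=1}^\infty$ has a further subsequence converging in $\mathcal G_1$. Any two such subsequential limits have, for every $k$, the same shape $\widetilde\S_k(\cdot)$, namely the common Vietoris limit furnished by s-convergence, so by the description of $\sim$ through the shapes $\widetilde\S_k(\cdot)$ recorded after \Cref{prop:s-shapy} they represent one and the same class. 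A standard argument (compactness together with a unique subsequential limit) then shows that the whole sequence $\big([(\lambda,\mu_n)]\big)_{n=1}^\infty$ converges in $\mathcal G_1$ to a single class $[(\pi_\infty,\mu_\infty)]$. In particular $\widetilde\S_k(\lambda,\mu_n)\to\widetilde\S_k(\pi_\infty,\mu_\infty)$ and, because the two topologies coincide on $\mathcal G_1$, also $\S_k(\lambda,\mu_n)\to\S_k(\pi_\infty,\mu_\infty)$ for every $k$.

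Finally I would normalize the limit to an honest s-graphon. By \Cref{rem:nonatomic} I may replace $(\pi_\infty,\mu_\infty)$ by a $\sim$-equivalent \sqg\ of the form $(\lambda,\mu)$ on $([0,1],\B_{[0,1]})$, which leaves all the shapes unchanged. It remains to check that $\mu$ is a symmetric probability measure. Its total mass equals $\mu([0,1]^2)=\lim_n\mu_n([0,1]^2)=1$, since the total edge mass is an invariant read off from the (single point forming the) converging $1$-shapes, exactly as in the boundedness computation in the proof of \Cref{lem:compactness}. For symmetry, every element of each $(\lambda,\mu_n)^\downarrow_k$ has a symmetric edge measure by \Cref{ex:symmetryPositive}; symmetry is a closed condition, so the same holds throughout $\S_k(\lambda,\mu_n)$ and, passing to the Vietoris limit, throughout $\S_k(\lambda,\mu)$ as well. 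By the contrapositive of \Cref{ex:symmetry}, the measure $\mu$ must then be symmetric. Hence $\mu$ is a symmetric Borel probability measure on $[0,1]^2$, that is, an s-graphon, and $\widetilde\S_k(\lambda,\mu_n)\to\widetilde\S_k(\lambda,\mu)$ for every $k$, so $\mu$ is an s-limit of $(\mu_n)_{n=1}^\infty$.

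The main obstacle is the middle step: converting s-convergence, which is phrased only through the shapes $\widetilde\S_k(\cdot)$, into the existence of a genuine limit \sqg. This rests on the compactness of $\mathcal G_1$ (itself ultimately a consequence of \Cref{thm:existence}) and on the coincidence of the two topologies from \Cref{thm:2topologie}; the remaining care lies in arranging, via \Cref{rem:nonatomic}, that the limit is carried by $([0,1],\B_{[0,1]})$ with Lebesgue distribution of vertices, and in verifying that the limiting edge measure is again symmetric and of total mass one.
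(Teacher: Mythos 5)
Your proposal is correct and follows essentially the same route as the paper: both pass through \Cref{thm:2topologie} to translate s-convergence into convergence in the sense of \Cref{def:convergence}, obtain a limit \sqg\ (you via compactness of $\mathcal G_1$ and uniqueness of subsequential limits, the paper by citing \Cref{thm:existence} directly --- though the compactness you invoke is itself ultimately derived from \Cref{thm:existence}, as you acknowledge), normalize the limit via \Cref{rem:nonatomic}, and verify that the limiting edge measure is a symmetric probability measure using the convergence of $1$-shapes together with \Cref{ex:symmetryPositive} and \Cref{ex:symmetry}. Your subsequence argument merely makes explicit a step the paper compresses into a single appeal to \Cref{thm:2topologie}.
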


\begin{proof}
By \Cref{thm:2topologie}, the sequence $(\lambda,\mu_n)_{n=1}^\infty$ of \sqg s on $([0,1],\B_{[0,1]})$ is convergent (in the sense of \Cref{def:convergence}).
By \Cref{thm:existence}, there is a \sqg\ $(\pi,\mu)$ on some measurable space $(X,\A)$, which is a limit of the sequence $(\lambda,\mu_n)_{n=1}^\infty$.
By \Cref{thm:2topologie}, for every $k\in\N$, the sequence $\big(\widetilde\S_k(\lambda,\mu_n)\big)_{n=1}^\infty$ converges to $\widetilde\S_k(\pi,\mu)$.

By \Cref{rem:nonatomic}, we may assume that
\[(X,\A)=([0,1],\B_{[0,1]})\quad\text{and}\quad\pi=\lambda.\]
So it remains to show that the measure $\mu$ on $([0,1],\B_{[0,1]})^2$ is a symmetric probability measure.

The facts that each $\mu_n$ is a probability measure and that the sequence $\big(\widetilde\S_1(\lambda,\mu_n)\big)_{n=1}^\infty$ converges to $\widetilde\S_1(\lambda,\mu)$ easily imply that $\mu$ is a probability measure.

Finally, by \Cref{ex:symmetryPositive}, all $3$-shapes $\S_3(\lambda,\mu_n)$, $n\in\N$, consist only of (some of) those \sqg s whose distributions of edges are symmetric measures.
By convergence, the $3$-shape $\S_3(\lambda,\mu)$ also consists only of (some of) those \sqg s whose distributions of edges are symmetric measures.
So the symmetry of $\mu$ follows by \Cref{ex:symmetry}.
\end{proof}

\section{Final remarks}
\label{sec:finalRemarks}

In the next proposition, we show that isomorphisms between \sqg s with reasonable underlying measurable spaces correspond to isomorphisms between the underlying measurable spaces which preserve the distributions of vertices and edges.

\begin{proposition}
Let $(\pi_X,\mu_X)$ and $(\pi_Y,\mu_Y)$ be \sqg s on measurable
spaces $(X,\A)$ and $(Y,\B)$, respectively.
Let $f\colon X\to Y$ be an isomorphism between the measurable spaces $(X,\A)$ and $(Y,\B)$ such that $f_*\pi_X=\pi_Y$ and $f^{\otimes2}_*\mu_X=\mu_Y$.
Then the Markov kernel $\kappa_f$ is an isomorphism from $(\pi_X,\mu_X)$ to $(\pi_Y,\mu_Y)$ in the category of all \sqg s.

If all singleton subsets of $X$ and $Y$ are measurable, then every isomorphism from $(\pi_X,\mu_X)$ to $(\pi_Y,\mu_Y)$ is of this form.
\end{proposition}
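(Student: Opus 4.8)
The plan is to handle the two assertions separately: the first is a direct verification, while the second carries the real content. For the first assertion, note that $\kappa_f$ is already a morphism by \Cref{ex:deterministicMorphism}, since by hypothesis $f_*\pi_X=\pi_Y$ and $f^{\otimes2}_*\mu_X=\mu_Y$. Writing $g=f^{-1}$, which is measurable because $f$ is an isomorphism of measurable spaces, I would first check that $\kappa_g$ is a morphism from $(\pi_Y,\mu_Y)$ to $(\pi_X,\mu_X)$: by functoriality of the pushforward, $g_*\pi_Y=g_*(f_*\pi_X)=(g\circ f)_*\pi_X=(i_X)_*\pi_X=\pi_X$, and using $(g\circ f)^{\otimes2}=g^{\otimes2}\circ f^{\otimes2}$ together with $i_X^{\otimes2}=i_{X^2}$, likewise $g^{\otimes2}_*\mu_Y=\mu_X$. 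By the composition rule $\kappa_{f'}\circ\kappa_f=\kappa_{f'\circ f}$ for deterministic kernels (the example following \Cref{ex:meas}), it then follows that $\kappa_g\circ\kappa_f=\kappa_{g\circ f}=\kappa_{i_X}=1_{(\pi_X,\mu_X)}$ and symmetrically $\kappa_f\circ\kappa_g=\kappa_{i_Y}=1_{(\pi_Y,\mu_Y)}$, so $\kappa_f$ is an isomorphism.

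For the second assertion, let $\kappa$ be an isomorphism with inverse $\kappa'$, so that $\kappa'\circ\kappa=\kappa_{i_X}$ and $\kappa\circ\kappa'=\kappa_{i_Y}$. The key step -- and the one I expect to be the main obstacle -- is to prove that $\kappa(x,\cdot)$ is a Dirac measure for every $x\in X$; this is precisely where measurability of singletons enters. Applying $\kappa'\circ\kappa=\kappa_{i_X}$ via the composition formula~\eqref{eq:compositionOfMorphisms} to the measurable singleton $\{x\}$ gives
\[
\int_Y\kappa'(y,\{x\})\,\kappa(x,dy)=\kappa_{i_X}(x,\{x\})=1;
\]
since the integrand is at most $1$ and $\kappa(x,\cdot)$ is a probability measure, this forces $\kappa'(y,\{x\})=1$, i.e.\ $\kappa'(y,\cdot)=\delta_x$, for $\kappa(x,\cdot)$-almost every $y$, and in particular for at least one such $y$. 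Fixing such a $y$ and substituting $\kappa'(y,\cdot)=\delta_x$ into $\kappa\circ\kappa'=\kappa_{i_Y}$ yields
\[
\kappa(x,B)=\int_X\kappa(x',B)\,\kappa'(y,dx')=\kappa\circ\kappa'(y,B)=\kappa_{i_Y}(y,B)\qquad(B\in\B),
\]
so that $\kappa(x,\cdot)=\kappa_{i_Y}(y,\cdot)=\delta_y$. As a Dirac measure determines its atom (again using that singletons are measurable), this $y$ is unique, and I may define $f\colon X\to Y$ by $\kappa(x,\cdot)=\delta_{f(x)}$; thus $\kappa=\kappa_f$.

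It remains to verify that $f$ is a measurable isomorphism preserving the two measures. Measurability of $f$ holds because $f^{-1}(B)=\{x:\kappa(x,B)=1\}\in\A$, the map $\kappa(\cdot,B)$ being measurable for each $B\in\B$. Applying the same Dirac argument to $\kappa'$ produces a measurable map $g\colon Y\to X$ with $\kappa'=\kappa_g$. Then $\kappa_{g\circ f}=\kappa_g\circ\kappa_f=\kappa'\circ\kappa=\kappa_{i_X}$ forces $\delta_{g(f(x))}=\delta_x$ for every $x$, hence $g\circ f=i_X$, and symmetrically $f\circ g=i_Y$; so $f$ is a bijection whose inverse $g$ is measurable, i.e.\ an isomorphism of measurable spaces. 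Finally, since $\kappa=\kappa_f$ is a morphism, \Cref{ex:deterministicMorphism} gives $f_*\pi_X=\pi_Y$ and $f^{\otimes2}_*\mu_X=\mu_Y$, which completes the proof.
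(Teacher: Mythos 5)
Your proof is correct, and its overall strategy coincides with the paper's: first part by exhibiting $\kappa_{f^{-1}}$ as the inverse morphism, second part by applying the composition identity $\kappa'\circ\kappa=\kappa_{i_X}$ to the measurable singleton $\{x\}$ to force $\kappa'(\cdot,\{x\})=1$ on a set of full $\kappa(x,\cdot)$-measure. Where you diverge is in how you pin down that $\kappa(x,\cdot)$ is a Dirac measure. The paper first establishes, by symmetry, that for every $y\in Y$ there is some $x'$ with $\kappa(x',\{y\})=1$, and then rules out two distinct atoms $y_1,y_2$ of $\kappa'(\cdot,\{x\})$ by a contradiction argument involving such auxiliary points $x_1,x_2$ and a second application of $\kappa'\circ\kappa=\kappa_{i_X}$. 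You instead pick a single $y$ with $\kappa'(y,\cdot)=\delta_x$ and substitute it into the \emph{other} identity $\kappa\circ\kappa'=\kappa_{i_Y}$, which immediately yields $\kappa(x,\cdot)=\kappa_{i_Y}(y,\cdot)=\delta_y$; uniqueness of $y$ is then automatic since a probability measure has at most one singleton of full measure. This is a genuinely shorter route to the same conclusion, using both composition identities in tandem rather than a case analysis, and it avoids the paper's intermediate symmetric existence claim. The remaining steps (measurability of $f$ via $f^{-1}(B)=\{x:\kappa(x,B)=1\}$, bijectivity from $\kappa_{g\circ f}=\kappa_{i_X}$ and $\kappa_{f\circ g}=\kappa_{i_Y}$, and preservation of the measures via the characterization of deterministic morphisms) match the paper's proof.
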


\begin{proof}
Suppose that $f\colon X\to Y$ is an isomorphism between the measurable spaces $(X,\A)$ and $(Y,\B)$ such that $f_*\pi_X=\pi_Y$ and $f^{\otimes2}_*\mu_X=\mu_Y$.
Then $f^{-1}_*\pi_Y=\pi_X$ and $(f^{-1})^{\otimes2}_*\mu_Y=(f^{\otimes2})^{-1}_*\mu_Y=\mu_X$.
To prove that $\kappa_f$ is an isomorphism, we will show that $\kappa_f\circ\kappa_{f^{-1}}=1_{(\pi_Y,\mu_Y)}$ and $\kappa_{f^{-1}}\circ\kappa_f=1_{(\pi_X,\mu_X)}$.
By symmetry, it is enough to show the former equation.
So we fix $y\in Y$ and $B\in\B$ and compute
\begin{equation*}
\begin{split}
\kappa_f\circ\kappa_{f^{-1}}(y,B)=&\int_X\kappa_f(x,B)\kappa_{f^{-1}}(y,dx)\\
=&\kappa_{f^{-1}}\big(y,f^{-1}(B)\big)\\
=&
\begin{cases}
1,&y\in B,\\
0,&y\notin B,
\end{cases}
\end{split}
\end{equation*}
which we needed.

Now suppose that all singleton subsets of $X$ and $Y$ are measurable and that $\kappa$ is an isomorphism from $(\pi_X,\mu_X)$ to $(\pi_Y,\mu_Y)$.
Then there is a morphism $\kappa'$ from $(\pi_Y,\mu_Y)$ to $(\pi_X,\mu_X)$ such that $\kappa\circ\kappa'=1_{(\pi_Y,\mu_Y)}$ and $\kappa'\circ\kappa=1_{(\pi_X,\mu_X)}$.
In particular, for every $x\in X$, it holds that
\begin{equation*}
\begin{split}
1=&1_{(\pi_X,\mu_X)}(x,\{x\})\\
=&\kappa'\circ\kappa(x,\{x\})\\
=&\int_Y\kappa'(y,\{x\})\kappa(x,dy),
\end{split}
\end{equation*}
and so $\kappa'(\cdot,\{x\})=1$ on a set of full $\kappa(x,\cdot)$-measure.
So, for every $x\in X$, there is at least one $y\in Y$ such that $\kappa'(y,\{x\})=1$.
By symmetry, for every $y\in Y$, there is at least one $x\in X$ such that $\kappa(x,\{y\})=1$.

Suppose that there is $x\in X$ such that there are two distinct elements $y_1,y_2\in Y$ with $\kappa'(y_i,\{x\})>0$, $i=1,2$.
Let $x_i\in X$ be such that $\kappa(x_i,\{y_i\})=1$, $i=1,2$.
Then either $x_1\neq x$ or $x_2\neq x$.
Let us assume that $x_1\neq x$ (the other case is analogous).
Then
\begin{equation*}
\begin{split}
0=&1_{(\pi_X,\mu_X)}(x_1,\{x\})\\
=&\kappa'\circ\kappa(x_1,\{x\})\\
=&\int_Y\kappa'(y,\{x\})\kappa(x_1,dy)\\
=&\kappa'(y_1,\{x\})\\
>&0,
\end{split}
\end{equation*}
a contradiction.
It follows that, for every $x\in X$, there is a unique $f(x)\in Y$ such that $\kappa'(f(x),\{x\})>0$, and then $\kappa'(f(x),\{x\})=1$.
By symmetry, for every $y\in Y$, there is a unique $g(y)\in X$ such that $\kappa(g(y),\{y\})>0$, and then $\kappa(g(y),\{y\})=1$.
This defines maps $f\colon X\to Y$ and $g\colon Y\to X$.

For every $x\in X$, it holds that
\begin{equation*}
\begin{split}
1_{(\pi_X,\mu_X)}(g(f(x)),\{x\})=&\kappa'\circ\kappa(g(f(x)),\{x\})\\
=&\int_Y\kappa'(y,\{x\})\kappa(g(f(x)),dy)\\
=&\kappa'(f(x),\{x\})\\
=&1,
\end{split}
\end{equation*}
and so $g\circ f$ is the identity on $X$.
By symmetry, $f\circ g$ is the identity on $Y$.
Altogether, $f$ is a bijection and $g=f^{-1}$.

For every $B\in\B$, we have
\[
f^{-1}(B)=g(B)=\{x\in X:\kappa(x,B)=1\},
\]
so $f$ is measurable by the definition of a Markov kernel.
By symmetry, $g$ is also measurable.
It follows that $f$ is an isomorphism between the measurable spaces $(X,\A)$ and $(Y,\B)$.

Finally, it holds $\kappa=\kappa_f$, as
\[
\kappa\big(x,\{f(x)\}\big)=\kappa\big(g(f(x)),\{f(x)\}\big)=1,\quad x\in X,
\]
and the conclusion follows.
\end{proof}

The next example shows that the situation can be more tricky if singletons are not required to be measurable.

\begin{example}
For every $k\in\N$, let $\A_k$ be the trivial $\sigma$-algebra on the set $[k]$, that is,
\[\A_k=\big\{\emptyset,[k]\big\},\quad k\in\N.\]
Then, for every $k\in\N$, there is only one probability measure on $([k],\A_k)$.
Consequently, for every $k,l\in\N$, there is exactly one Markov kernel from $([k],\A_k)$ to $([l],\A_l)$.
If $(\pi_k,\mu_k)$ and $(\pi_l,\mu_l)$ are \sqg\ on $([k],\A_k)$ and $([l],\A_l)$, respectively, and if $\mu_k([k]^2)=\mu_l([l]^2)$, then the unique Markov kernel from $([k],\A_k)$ to $([l],\A_l)$ is an isomorphism from $(\pi_k,\mu_k)$ to $(\pi_l,\mu_l)$.
\end{example}

Let $C$ be the Cantor set, $\B_C$ be the Borel $\sigma$-algebra on $C$ and $\pi_C$ be the natural probability measure on $(C,\B_C)$.
In~\cite[Lemma~3.1]{KLS}, it was shown that, if $\mu$ is a symmetric finite measure on $(C,\B_C)^2$ which is absolutely continuous with respect to $\pi_C^2$, then
\[
\widetilde\S_k(\pi_C,\mu)=\overline{\big\{\nu:(\gamma_k,\nu)\in(\pi,\mu)^\downarrow_k\big\}}=\big\{\nu:(\gamma_k,\nu)\in(\pi,\mu)^\downarrow_k\big\}.
\]
This suggests that, in some cases, the topological closure from \Cref{def:k-shape} is redundant.
However, in the following example, we will show that it is not the case in general.

\begin{example}
\label{ex:shapeNotClosed}
Let $(q_n)_{n=1}^\infty$ be an enumeration (without repetitions) of all rational numbers from the interval $(0,2)$.
For every $n\in\N$, let $\mu_n$ be the measure on $([0,1],\B_{[0,1]})^2$ which is uniquely given by the following properties:
\begin{itemize}
\item $\mu_n$ is concentrated on the line segment $\{(x,y)\in[0,1]^2:x+y=q_n\}$,
\item the restriction of $\mu_n$ to the above line segment is a multiple of the one-dimensional Lebesgue measure on that segment,
\item $\mu_n([0,1]^2)=\frac1{2^n}$.
\end{itemize}
Let $\mu$ be the measure on $([0,1],\B_{[0,1]})^2$ given by $\mu=\sum_{n=1}^\infty\mu_n$.
Let $\nu$ be the measure on $\F_2^2$ given by
\begin{equation*}
\nu(\{(i,j)\})=
\begin{cases}
0,&i=j,\\
\frac12,&i\neq j,
\end{cases}
\quad i,j\in[2].
\end{equation*}
Then the \sqg\ $(\gamma_2,\nu)$ on $\F_2$ belongs to $\S_2(\lambda,\mu)$, but not to $(\lambda,\mu)^\downarrow_2$.

Towards the proof, we first show that $(\gamma_2,\nu)\notin(\lambda,\mu)^\downarrow_2$.
Suppose for a contradiction that $\kappa$ is a morphism from $(\lambda,\mu)$ to $(\gamma_2,\nu)$.
Then
\begin{equation}
\label{eq:a}
\begin{split}
0=&\nu(\{(1,1)\})\\
=&\kappa^{\otimes 2}_*\mu(\{(1,1)\})\\
=&\int_{[0,1]^2}\kappa(x_1,\{1\})\kappa(x_2,\{1\})\,d\mu(x_1,x_2).
\end{split}
\end{equation}
Let $n_0\in\N$ be such that $q_{n_0}=1$.
Then $\mu_{n_0}$ is concentrated on the line segment $\{(x,y)\in[0,1]^2:x+y=1\}$ and its restriction to this line segment is the $\frac1{2^{n_0+\frac12}}$-multiple of the one-dimensional Lebesgue measure on that segment.
So, \eqref{eq:a} together with the fact that $\mu=\sum_{n=1}^\infty\mu_n$ easily imply that
\begin{equation*}
\begin{split}
0=&\int_{[0,1]^2}\kappa(x_1,\{1\})\kappa(x_2,\{1\})\,d\mu_{n_0}(x_1,x_2)\\
=&\frac1{2^{n_0}}\int_0^1\kappa(x,\{1\})\kappa(1-x,\{1\})\,d\lambda(x).
\end{split}
\end{equation*}
It follows that, for $\lambda$-almost every $x\in[0,1]$, either $\kappa(x,\{1\})=0$ or $\kappa(1-x,\{1\})=0$.
So
\begin{equation}
\label{eq:b}
\lambda\big(\{x\in[0,1]:\kappa(x,\{1\})=0\}\big)\ge\frac12.
\end{equation}
On the other hand, it holds that
\begin{equation}
\label{eq:c}
\int_0^1\kappa(x,\{1\})\,d\lambda(x)=\kappa_*\lambda(\{1\})=\gamma_2(\{1\})=\frac12.
\end{equation}
Combining~\eqref{eq:b} and~\eqref{eq:c} together, we obtain that $\kappa(\cdot,\{1\})$ is, up to a modification on a $\lambda$-null set, the characteristic function of some Borel set $A\subseteq[0,1]$ with $\lambda(A)=1/2$.
By our definition of the measure $\mu$, the function \[(x_1,x_2)\mapsto\kappa(x_1,\{1\})\kappa(x_2,\{1\}),\quad (x_1,x_2)\in[0,1]^2,\]
coincides with the characteristic function of $A^2$ $\mu$-almost everywhere.
Thus, by~\eqref{eq:a}, we must have $\mu(A^2)=0$.

By Lebesgue's density theorem, we can find rational numbers $0<q<q'<1$ such that
\begin{equation}
\label{tyCoJsouVA}
\lambda\big(A\cap[q,q']\big)>\frac12\lambda\big([q,q']\big).
\end{equation}
As the map $x\mapsto q+q'-x$ is an automorphism of the interval $[q,q']$ which preserves the Lebesgue measure, it follows that
\begin{equation}
\label{tyCoSeZobraziDoA}
\lambda\big(\{x\in[q,q']:q+q'-x\in A\}\big)=\lambda\big(\{A\cap[q,q']\})>\frac12\lambda([q,q']\big).
\end{equation}
By~\eqref{tyCoJsouVA} and~\eqref{tyCoSeZobraziDoA}, it holds that
\begin{equation}
\label{eq:LDT3}
\lambda\big(\{x\in[q,q']:x\in A\text{ and }q+q'-x\in A\}\big)>0.
\end{equation}
Let $n_1\in\N$ be such that $q_{n_1}=q+q'$.
Then, by definition of the measure $\mu_{n_1}$, equation~\eqref{eq:LDT3} implies that $\mu_{n_1}(A^2)>0$, which is a contradiction with the proven fact that $\mu(A^2)=0$.

It remains to prove that $(\gamma_2,\nu)\in\S_2(\lambda,\mu)$.
To this end, we must show that $(\gamma_2,\nu)$ can be approximated, with an arbitrary precision, by elements of $(\lambda,\mu)^\downarrow_2$.
So let us fix $\varepsilon>0$ arbitrarily small.
We find $N\in\N$ such that
\begin{equation}
\label{eq:epsilonN}
\frac1{2^{N+1}}<\varepsilon.
\end{equation}
Let $s\in\N$ be such that the rational numbers $q_1,\ldots,q_N$ can be written in the form
\[q_n=\frac{r_n}s,\quad n=1,\ldots,N,\]
for some $r_1,\ldots,r_N\in\N$.
For every $k\in\{1,2,\ldots,2s\}$, we put
\[I_k=\left(\frac{k-1}{2s},\frac k{2s}\right).\]
We define a measurable map $f_\varepsilon\colon[0,1]\to[2]$ by
\[
f_\varepsilon(x)=
\begin{cases}
1,&x\in I_k\text{ for some odd }k\in\{1,2,\ldots,2s\},\\
2,&\text{otherwise}.
\end{cases}
\]
Then it is easy to see that, by our choice of $s$, it holds that
\begin{equation}
\label{eq:sudeLiche}
\mu_n\big(f_\varepsilon^{-1}(\{1\})\times f_\varepsilon^{-1}(\{2\})\big)
=\frac12\mu_n\big([0,1]^2\big)=\frac1{2^{n+1}},\quad n=1,\ldots,N.	
\end{equation}
Obviously, the pushforward $(f_\varepsilon)_*\lambda$ equals $\gamma_2$.
We put $\nu_\varepsilon=(f_\varepsilon)^{\otimes 2}_*\mu$.
Then the Markov kernel $\kappa_{f_\varepsilon}$ is a morphism from $(\lambda,\mu)$ to $(\gamma_2,\nu_\varepsilon)$, and so $(\gamma_2,\nu_\varepsilon)\in(\lambda,\mu)^\downarrow_2$.
We will show that
\begin{equation}
\label{eq:aproxL2easier}
\Big|\nu_\varepsilon(\{(1,2)\})-\frac12\Big|<\varepsilon.
\end{equation}
In a complete analogy, one could also show that
\begin{equation}
\label{eq:aproxL2easier2}
\Big|\nu_\varepsilon(\{(2,1)\})-\frac12\Big|<\varepsilon.
\end{equation}
As $\mu$ is a probability measure, $\nu_\varepsilon$ is a probability measure as well, and so it will follow from~\eqref{eq:aproxL2easier} and~\eqref{eq:aproxL2easier2} that
\begin{equation*}
\big|\nu_\varepsilon(\{(i,j)\})-\nu(\{(i,j)\})\big|<2\varepsilon,\quad i,j\in[2].
\end{equation*}
This is all we need to finish the proof, as $\varepsilon>0$ was chosen arbitrarily small.

To verify~\eqref{eq:aproxL2easier}, we compute
\begin{equation}
\label{eq:skoro}
\begin{split}
&\Big|\nu_\varepsilon(\{(1,2)\})-\frac12\Big|\\
=&\Big|({f_\varepsilon})^{\otimes 2}_*\mu(\{(1,2)\})-\frac12\Big|\\
=&\Big|\mu\left(f_\varepsilon^{-1}(\{1\})\times f_\varepsilon^{-1}(\{2\})\right)-\frac12\Big|\\
\le&\sum_{n=1}^\infty\Big|\mu_n\left(f_\varepsilon^{-1}(\{1\})\times f_\varepsilon^{-1}(\{2\})\right)-\frac1{2^{n+1}}\Big|\\
\stackrel{\eqref{eq:sudeLiche}}{=}&\sum_{n=N+1}^\infty\Big|\mu_n\left(f_\varepsilon^{-1}(\{1\})\times f_\varepsilon^{-1}(\{2\})\right)-\frac1{2^{n+1}}\Big|.
\end{split}
\end{equation}
Applying the fact that $\mu_n([0,1]^2)=\frac1{2^n}$, $n\in\N$, we can continue~\eqref{eq:skoro} by
\begin{equation*}
\left|\nu_\varepsilon(\{(1,2)\})-\frac12\right|
\le\sum_{n=N+1}^\infty\frac1{2^{n+1}}
=\frac1{2^{N+1}}
\stackrel{\eqref{eq:epsilonN}}{<}\varepsilon,
\end{equation*}
as we wanted.
\end{example}

Let $\sim$ be the equivalence relation on the class of all \sqg s given by
\[
(\pi_1,\mu_1)\sim(\pi_2,\mu_2)\quad\Leftrightarrow\quad\forall k\in\N\ \big(\S_k(\pi_1,\mu_1)=\S_k(\pi_2,\mu_2)\big).
\]
It is natural to ask for some simple characterization of the relation $\sim$ (cf. the problem of a characterization
of the isomorphism relation between s-graphons, \cite[p.~36]{KLS}).
It is clear that if $(\pi,\mu)$ and $(\pi',\mu')$ are \sqg s such that there are morphisms between them in both directions, then $(\pi,\mu)^\downarrow_k=(\pi',\mu')^\downarrow_k$, $k\in\N$, and so $(\pi,\mu)\sim(\pi',\mu')$.
However, this is not a characterization, as we show in the next example.

\begin{example}
There exist \sqg s $(\pi,\mu)$ and $(\pi',\mu')$ such that $(\pi,\mu)\sim(\pi',\mu')$ and such that there is no morphism from $(\pi',\mu')$ to $(\pi,\mu)$.

Indeed, by \Cref{ex:shapeNotClosed}, there are \sqg s $(\pi',\mu')$ and $(\rho,\nu)$ such that
\[
(\rho,\nu)\in\S_2(\pi',\mu')\setminus(\pi',\mu')^\downarrow_2.
\]
Consider the constant sequence of \sqg s, where each element of the sequence equals $(\pi',\mu')$.
Of course, this sequence trivially has a limit, but we are interested in the limit constructed by the procedure described in the proof of \Cref{thm:existence}.
Let us recall that, if we follow that proof, we have the freedom to choose quite arbitrary countable dense subsets $D_k$ of $S_k=\S_k(\pi',\mu')$, $k\in\N$, and then we construct the limit \sqg\ $(\pi,\mu)$ in such a way that (among other properties) every element of $D_k$ belongs to $(\pi,\mu)^\downarrow_k$, $k\in\N$.
In particular, we can choose the set $D_2$ such that $(\rho,\nu)\in D_2$.
Then $(\pi,\mu)\sim(\pi',\mu')$ (because $(\pi,\mu)$ is a limit of the constant sequence) and
\begin{equation}
\label{eq:sporMorphismu}
(\rho,\nu)\in(\pi,\mu)^\downarrow_2.
\end{equation}
So if there was a morphism from $(\pi',\mu')$ to $(\pi,\mu)$ then, by~\eqref{eq:sporMorphismu}, there would also be a morphism from $(\pi',\mu')$ to $(\rho,\nu)$, a contradiction.
\end{example}

\begin{center}
	******
\end{center}

\paragraph{Conclusion.}
We have defined a natural category capturing the well-established concepts of graph limits: graphons and, a bit more general, s-graphons. Our framework allowed us proving the convergence and compactness results, at the same time extending the concept of a graphon in the spirit of weighted directed graphs. We expect that our category-theoretic framework will find applications in the classification of graph limits, possibly discovering new relationships between various types of \sqg s.

\paragraph{Acknowledgments.}
The authors thank Jan Swart for pointing out the Kolmogorov Extension Theorem to us.

\bibliographystyle{plain}
\bibliography{References}

\end{document}